\let\oldmarginpar\marginpar
\renewcommand\marginpar[1]{\oldmarginpar[\raggedleft\footnotesize #1]%
{\raggedright\footnotesize #1}}
\theoremstyle{plain}
\newtheorem{theorem}{Theorem}[section]
\newtheorem{corollary}[theorem]{Corollary}
\newtheorem{lemma}[theorem]{Lemma}
\newtheorem{prop}[theorem]{Proposition}
\newtheorem{conjecture}[theorem]{Conjecture}
\theoremstyle{definition}
\newtheorem{define}[theorem]{Definition}
\newtheorem{remark}[theorem]{Remark}
\newcommand{\C}{\mathbb{C}}
\newcommand{\R}{\mathbb{R}}
\newcommand{\Q}{\mathbb{Q}}
\newcommand{\Z}{\mathbb{Z}}
\newcommand{\N}{\mathbb{N}}
\newcommand{\CC}{\mathbb{C}}
\newcommand{\RR}{\mathbb{R}}
\newcommand{\HH}{\mathbb{H}}
\newcommand{\ZZ}{\mathbb{Z}}
\title{Intercusp Geodesics and Cusp Shapes of Fully Augmented Links}
\date{\today}
\author{Rochy Flint}
\address{DEPARTMENT OF MATHEMATICS, SCIENCE, AND TECHNOLOGY, TEACHERS COLLEGE, COLUMBIA UNIVERSITY, NEW YORK, NY}
\email{crf51@tc.columbia.edu}
\begin{document}

\begin{abstract}
  We study the geometry of fully augmented link complements
  in $S^3$ by looking at their link diagrams. We extend the method
  introduced by Thistlethwaite and Tsvietkova \cite{tt} to fully
  augmented links and define a system of algebraic equations in terms of parameters coming from edges and crossings of the link diagrams. Combining it with the work of Purcell \cite{purcell-fal}, we show that the solutions to these algebraic equations are related to the cusp shapes of fully augmented link complements.  As an application we use the cusp shapes to study the commensurability
  classes of fully augmented links.

\end{abstract}

\maketitle{}

\section{Introduction}

Understanding the relationship between the combinatorics of a link
diagram and the geometry and topology of its complement is an
important problem and an active area of research.  In this paper we
study this relationship for an infinite family of links called
\emph{fully augmented links}. These are links obtained from a given
link diagram by augmenting every twist region with a circle component
and removing all twists, see Figure \ref{3pa}.

Thurston studied the interactions between geometry and combinatorics
using ideal triangulations and gluing equations for hyperbolic link
complements and 3-manifolds. The solutions to the gluing equations
allow us to construct the discrete faithful representation of the
fundamental group of the link complement to
$\rm{Isom}^+(\mathbb{H}^3)$ and help us to compute many geometric
invariants.  Although an ideal triangulation can be obtained from a
link diagram easily, it is much harder to find solutions to the gluing
equations. In addition, it is difficult to relate the geometric
invariants obtained from the solutions of gluing equations to the
diagrammatic invariants obtained from the link diagram.

In \cite{tt} Thistlethwaite and Tsvietkova used link diagrams to study
the geometry of hyperbolic alternating link complements by
implementing a method to construct a system of algebraic equations
directly from the link diagram. The solutions to these equations allow
them to construct the discrete faithful representation of the link
group into Isom$^+(\HH^3)$.
 We refer to their method as the T-T
method. The idea of the T-T method is as follows: by looking at the
faces of the link diagram, and assigning parameters to crossings and
edges in every face, they find relations on the parameters using the 
geometry of the link complement, which
determine algebraic equations, and the solutions to these equations have 
geometric information. 
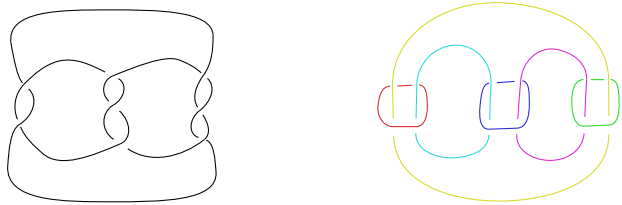
\begin{figure}
    \centering
\begin{tikzpicture}[scale=.3][line width=4.9, line cap=round, line join=round]
    \draw (1.03, 5.53) .. controls (0.79, 5.76) and (0.63, 6.65) .. 
          (0.52, 7.26) .. controls (0.26, 8.77) and (2.74, 8.77) .. 
          (4.87, 8.77) .. controls (6.90, 8.77) and (9.47, 8.77) .. 
          (9.47, 7.61) .. controls (9.47, 7.00) and (9.41, 6.37) .. (9.05, 5.88);
    \draw (9.05, 5.88) .. controls (8.73, 5.44) and (8.50, 4.89) .. (8.79, 4.45);
    \draw (9.01, 4.12) .. controls (9.26, 3.75) and (9.40, 3.27) .. (9.08, 3.02);
    \draw (9.08, 3.02) .. controls (8.08, 2.23) and (6.72, 1.92) .. (5.69, 2.61);
    \draw (5.06, 3.03) .. controls (4.49, 3.40) and (4.49, 4.22) .. (5.04, 4.54);
    \draw (5.04, 4.54) .. controls (5.58, 4.85) and (5.68, 5.51) .. (5.27, 5.72);
    \draw (4.69, 6.01) .. controls (4.04, 6.34) and (3.35, 6.67) .. 
          (2.64, 6.50) .. controls (2.02, 6.35) and (1.53, 5.92) .. (1.09, 5.47);
    \draw (1.09, 5.47) .. controls (0.69, 5.06) and (0.58, 4.45) .. (0.80, 3.92);
    \draw (0.94, 3.58) .. controls (1.13, 3.10) and (1.53, 2.74) .. 
          (1.92, 2.40) .. controls (2.71, 1.70) and (4.17, 2.32) .. (5.36, 2.83);
    \draw (5.36, 2.83) .. controls (5.88, 3.05) and (5.84, 3.81) .. (5.32, 4.29);
    \draw (4.85, 4.72) .. controls (4.48, 5.06) and (4.57, 5.66) .. (5.02, 5.84);
    \draw (5.02, 5.84) .. controls (5.97, 6.23) and (7.29, 6.76) .. 
          (7.90, 6.57) .. controls (8.28, 6.44) and (8.64, 6.25) .. (8.94, 5.98);
    \draw (9.22, 5.73) .. controls (9.62, 5.36) and (9.39, 4.71) .. (8.88, 4.31);
    \draw (8.88, 4.31) .. controls (8.38, 3.93) and (8.37, 3.25) .. (8.82, 3.11);
    \draw (9.17, 2.99) .. controls (9.51, 2.88) and (9.56, 2.14) .. 
          (9.60, 1.57) .. controls (9.69, 0.26) and (7.10, 0.26) .. 
          (4.97, 0.26) .. controls (2.81, 0.26) and (0.26, 0.26) .. 
          (0.37, 1.85) .. controls (0.42, 2.54) and (0.48, 3.45) .. (0.89, 3.69);
    \draw (0.89, 3.69) .. controls (1.50, 4.07) and (1.75, 4.83) .. (1.30, 5.26);
\end{tikzpicture}
\hspace{1cm}
\definecolor{linkcolor0}{rgb}{0.85, 0.15, 0.15}
\definecolor{linkcolor1}{rgb}{0.15, 0.15, 0.85}
\definecolor{linkcolor2}{rgb}{0.15, 0.85, 0.15}
\definecolor{linkcolor3}{rgb}{0.15, 0.85, 0.85}
\definecolor{linkcolor4}{rgb}{0.85, 0.15, 0.85}
\definecolor{linkcolor5}{rgb}{0.85, 0.85, 0.15}
\begin{tikzpicture}[scale=.34][line width=4.5, line cap=round, line join=round]
  \begin{scope}[color=linkcolor0]
    \draw (0.73, 4.75) .. controls (0.39, 4.74) and (0.29, 4.33) .. 
          (0.26, 3.95) .. controls (0.23, 3.55) and (0.48, 3.17) .. (0.86, 3.17);
    \draw (0.86, 3.17) .. controls (1.16, 3.17) and (1.45, 3.17) .. (1.75, 3.17);
    \draw (1.75, 3.17) .. controls (2.09, 3.17) and (2.20, 3.59) .. 
          (2.19, 3.99) .. controls (2.18, 4.37) and (2.17, 4.81) .. (1.86, 4.80);
    \draw (1.59, 4.79) .. controls (1.40, 4.78) and (1.21, 4.77) .. (1.02, 4.76);
  \end{scope}
  \begin{scope}[color=linkcolor1]
    \draw (4.61, 4.87) .. controls (4.33, 4.85) and (4.29, 4.35) .. 
          (4.25, 3.95) .. controls (4.21, 3.52) and (4.26, 3.05) .. (4.61, 3.07);
    \draw (4.61, 3.07) .. controls (4.98, 3.08) and (5.34, 3.10) .. (5.70, 3.11);
    \draw (5.70, 3.11) .. controls (6.09, 3.13) and (6.18, 3.61) .. 
          (6.18, 4.05) .. controls (6.18, 4.47) and (6.18, 4.97) .. (5.90, 4.96);
    \draw (5.60, 4.94) .. controls (5.37, 4.92) and (5.14, 4.91) .. (4.91, 4.89);
  \end{scope}
  \begin{scope}[color=linkcolor2]
    \draw (8.30, 5.01) .. controls (7.92, 5.01) and (7.83, 4.54) .. 
          (7.84, 4.09) .. controls (7.85, 3.65) and (7.95, 3.17) .. (8.33, 3.19);
    \draw (8.33, 3.19) .. controls (8.65, 3.20) and (8.97, 3.22) .. (9.29, 3.23);
    \draw (9.29, 3.23) .. controls (9.65, 3.25) and (9.70, 3.71) .. 
          (9.69, 4.13) .. controls (9.68, 4.54) and (9.67, 5.01) .. (9.36, 5.01);
    \draw (9.11, 5.01) .. controls (8.94, 5.01) and (8.76, 5.01) .. (8.59, 5.01);
  \end{scope}
  \begin{scope}[color=linkcolor3]
    \draw (1.78, 4.80) .. controls (1.79, 5.63) and (2.44, 6.32) .. 
          (3.27, 6.36) .. controls (4.07, 6.39) and (4.71, 5.70) .. (4.68, 4.88);
    \draw (4.68, 4.88) .. controls (4.66, 4.39) and (4.64, 3.91) .. (4.63, 3.43);
    \draw (4.60, 2.85) .. controls (4.58, 2.22) and (3.86, 1.96) .. 
          (3.15, 1.95) .. controls (2.43, 1.94) and (1.73, 2.28) .. (1.74, 2.92);
    \draw (1.75, 3.49) .. controls (1.76, 3.93) and (1.77, 4.36) .. (1.78, 4.80);
  \end{scope}
  \begin{scope}[color=linkcolor4]
    \draw (8.42, 5.01) .. controls (8.45, 5.65) and (7.86, 6.10) .. 
          (7.20, 6.20) .. controls (6.49, 6.30) and (5.89, 5.70) .. (5.84, 4.95);
    \draw (5.84, 4.95) .. controls (5.80, 4.46) and (5.76, 3.96) .. (5.73, 3.47);
    \draw (5.69, 2.87) .. controls (5.64, 2.26) and (6.27, 1.87) .. 
          (6.94, 1.85) .. controls (7.64, 1.82) and (8.28, 2.25) .. (8.32, 2.91);
    \draw (8.35, 3.55) .. controls (8.37, 4.04) and (8.39, 4.53) .. (8.42, 5.01);
  \end{scope}
  \begin{scope}[color=linkcolor5]
    \draw (0.83, 4.76) .. controls (0.79, 6.77) and (2.83, 8.08) .. 
          (5.01, 8.02) .. controls (7.16, 7.96) and (9.27, 6.94) .. (9.28, 5.01);
    \draw (9.28, 5.01) .. controls (9.28, 4.54) and (9.29, 4.06) .. (9.29, 3.59);
    \draw (9.29, 2.87) .. controls (9.30, 1.06) and (7.18, 0.29) .. 
          (5.11, 0.26) .. controls (3.04, 0.23) and (0.91, 0.99) .. (0.87, 2.81);
    \draw (0.86, 3.49) .. controls (0.85, 3.91) and (0.84, 4.33) .. (0.83, 4.76);
  \end{scope}
\end{tikzpicture}
    \caption{link $K$ (left)  and the corresponding fully
      augmented link $L$ (right).}
    \label{3pa}
\end{figure}

Throughout this paper we abbreviate fully augmented links to FALs. In
this paper we show the following for FALs:
\begin{enumerate}
\item A way to extend the T-T method to FALs. This is
  the first application of the T-T method to an infinite class of
  non-alternating links. This is done in Proposition
  \ref{polyhedprop}, Theorem \ref{tps}, Lemma \ref{reverseorientation}, and Lemma \ref{edge};

\item A new method to determine the cusp shapes of FAL complements
  using the solutions of the system of equations obtained from the T-T
  method. This is proved in Theorem \ref{mainthm} and Theorem
  \ref{mainthm2};

\item A way to study commensurability of different classes of
  FALs. This is done in Theorems \ref{cm}, \ref{km}, and \ref{com1};

\item A way to choose the geometric solutions i.e. the solution which
  enables us to construct the discrete, faithful representation, from
  the solutions of the system of equations obtained from the T-T
  method. We demonstrate this in Theorem \ref{geometric}.
\end{enumerate}

This paper is divided into $5$ sections.  In \S \ref{sec:background}
we give necessary background about FALs, the geometry of their
complements, and introduce the T-T method for alternating links. We
also give an example illustrating the T-T method on alternating links. In \S \ref{sec:TT-FAL} we show that the T-T method can
be extended to FALs, and illustrate with examples. In \S
\ref{sec:cusp} we state our main theorem relating the cusp shapes to
the intercusp-geodesics, and give explicit examples. \S \ref{sec:itf}
discusses applications of our main theorems by studying the invariant
trace fields, commensurability and finding geometric solutions to
systems of equations in the T-T method for FALs.
 
 \subsection{Acknowledgements} This paper consists of results from my PhD dissertation. Many thanks to Abhijit Champanerkar for his invaluable support, innumerable conversations, and unlimited patience. I wish to thank Ilya Kofman and Walter Neumann for their constant support and advice. I also thank Jessica Purcell and Anastasiia Tsvietkova for helpful conversations, encouragement and their work which motivated this research project.

\section{Background}
\label{sec:background}

A \emph{hyperbolic 3-manifold} $M$ is a 3-manifold equipped with a
complete Riemannian metric of constant sectional curvature -1,
i.e. the universal cover of $M$ is $\HH^3$ with covering translations
acting as isometries. Equivalently, $M= \HH^3/\Gamma$ where $\Gamma$
is a torsion free Kleinian group i.e. a discrete torsion-free subgroup of
PSL$(2,\C) = \rm{Isom}^+(\HH^3)$.  In this paper we assume that our
hyperbolic 3-manifolds are complete, orientable and
 have finite hyperbolic volume. Hyperbolic 3-manifolds have a
thick-thin decomposition that allows us to understand the topology of
non-compact hyperbolic 3-manifolds. This decomposition consists of a
thin part with tubular neighborhoods of closed geodesics and ends
which are homeomorphic to a thickened torus.

A \emph{cusp} of a hyperbolic 3-manifold is the thin end isometric to
$T^2 \times [0,\infty)$ with the induced metric given as
$ds^2 = e^{-2t}(dx^2+dy^2) +dt^2$.

If $M= \HH^3/\Gamma$, then $M$ is non-compact if and only if
$\Gamma$ contains parabolic isometries (i.e. they have one fixed point
on the sphere at infinity of $\HH^3$), which correspond to the cusps of
$M$.  Distinct cusps of $M$ correspond to distinct conjugacy classes of
maximal parabolic subgroups of $\Gamma$. Note that the cross sectional tori
$T^2\times \{t\}$ are scaled Euclidean tori.

We say a link $K \in S^3$ is \emph{hyperbolic} if its complement
$S^3-K$ is a hyperbolic 3-manifold.  In a link complement, the cusps
are the tubular neighborhoods of the component of the link with
the link components deleted.  The cusps lifts to a set of horoballs with disjoint
interiors in the universal cover $\HH^3$. For each cusp, the set of
horoballs are identified by the covering transformations.
Thurston's famous example of the figure-eight knot complement
decomposing into two ideal tetrahedra \cite{Thurston1979} is the first
example of finding the hyperbolic structure from a link diagram. Jeff
Weeks implemented the computer program SnapPea which finds the
geometric structure on link complements from link diagrams
\cite{Weeks2008}.  This was extended by Marc Culler and Nathan
Dunfield to the program SnapPy \cite{SnapPy}. 

Hyperbolic structures are useful to study knots and links using
geometric invariants.  One such invariant we will study in this paper
is the \emph{cusp shape}.
\begin{define}
  \cite{neumann1992arithmetic} A horospherical section of a cusp of a
  hyperbolic 3-manifold $M$ is a flat torus. This torus is isometric
  to $\mathbb{C}/\Lambda$, for some lattice
  $\Lambda \subset \mathbb{C}$, and the ratio of two generators of
  $\Lambda$ is the conformal parameter of the flat torus, which we
  call the \emph{cusp shape} of the cusp of $M$. Choosing generators
  $[m]$ and $[\ell]$ of $\pi_1(T^2)$, the Euclidean structure on the
  torus is obtained by mapping $[m]$ and $[\ell]$ to Euclidean
  translations $T_1(z) = z + \mu$ and $T_2(z)= z + \lambda$
  respectively, where $\mu$ and $\lambda \in \mathbb{C}$. Then $\mu$
  and $\lambda$ generate the lattice $\Lambda$ and the cusp shape is
  obtained as $\lambda/ \mu$.

\end{define}

The cusp shape gives us very important information about links.  Given
a specific link diagram, we can compute the cusp shape by drawing the
link diagram in SnapPy \cite{SnapPy}, which computes the hyperbolic
structure on its complement and many geometric invariants including
the cusp shape. For example, SnapPy gives a numerical value of the
cusp shapes of the Hamantash Link (See Figure \ref{magic}) as $1.5 + 1.32287565553i$. 
Below we will develop another way to
compute the cusp shape for FALs directly from the diagram of the links
using what we call the T-T polynomial.

\begin{define}
Two hyperbolic 3-manifolds are \emph{commensurable} if they
have a common finite-sheeted cover.
\end{define}

The cusp shape gives us key information that will enable us to analyze
whether certain links are commensurable.  The cusp shapes are
algebraic numbers and generate a number field called the \emph{cusp
  field}. Although the cusp shape depends on the choice of generators
of the peripheral subgroup, a different choice changes it by an
integral M\"obius transformation, hence the cusp field is independent of
choices of generators.  The cusp field is a commensurability invariant
\cite{Neumann2011}.  Hence cusp shapes can be used to determine
commensurability of two links complements.

\subsection{Fully Augmented Links}

The class of links that we will be studying is called fully augmented links.
\begin{define}
A link diagram is \emph{prime} if for any simple closed curve in the plane that intersects a component transversely in two points the simple closed curve
bounds a subdiagram containing no crossings. See Figure \ref{prime}(a).
\end{define}
\begin{figure}[h]
    \centering
    \begin{tikzpicture}
\draw [dashed] (0,3) rectangle (1,2);
\draw [dashed] (2.5,3) rectangle (3.5,2);
\draw (.5,3)..controls (1.75,4)..(3,3);
\draw (.5,2)..controls (1.75,1)..(3,2);
\node at (.5,2.5) {$A$};
\node at (3,2.5) {$B$};
\node at (4,2.5) {$\Longrightarrow$};
\draw [dashed] (4.5,3) rectangle (5.5,2);
\draw (5,3)--(5,2);
\node at (5,3.5) {$A$ or $B$};
\end{tikzpicture}
\hspace {1.5cm}
\begin{tikzpicture}
\draw [dashed] (0,3) rectangle (1,2);
\draw [dashed] (2,3) rectangle (3,2);
\draw (.3,3)..controls (.4,3.2)..(.5,3.25);
\draw (.7,3)..controls (.6,3.3)..(.4,3.5);
\draw (.4,3.5)..controls (2,3.8)..(2.7,3);
\draw (.7,3.27)..controls (1.9,3.56)..(2.3,3);
\draw (.3,2)..controls (1.6,1.4)..(2.3,2);
\draw (.7,2)..controls (.75,1.95)..(.65,1.88);
\draw (.6,1.8)..controls (.5,1.7)..(.55,1.65);

\draw(.55,1.65) ..controls(1.5,1)..(2.7,2);
\node at (.5,2.5) {$A$};
\node at (2.5,2.5) {$B$};
\node at (3.5,2.5) {$\Longrightarrow$};
\draw [dashed] (4,3) rectangle (5,1.5);
\node at (4.5,3.5) {$A$ or $B$};
\draw (4.3,3)..controls (4.2,2.9) and (4.7,2.8)..(4.5,2.6);
\draw (4.6,3)..controls(4.63,2.9)..(4.55,2.85);
\draw(4.45,2.75)..controls (4.2,2.65) and (4.7,2.55)..(4.5,2.35);
\draw (4.45,2.5) ..controls (4.35,2.45)..(4.4,2.35);
\node at (4.5,2.2) {$\cdot$};
\node at (4.5,2.1) {$\cdot$};
\node at (4.5,2.) {$\cdot$};
\draw (4.3,1.8)..controls (4.2,1.7) and (4.7,1.6)..(4.5,1.5);
\draw (4.55,1.75)--(4.45,1.7);
\draw (4.4,1.6)--(4.3,1.5);
\end{tikzpicture}
    (a) \quad \quad \quad \quad \quad \quad \quad \quad \quad \quad \quad \quad \quad \quad \quad \quad (b)
    \caption{(a) Prime diagram (b) Twist Reduced diagram}
    \label{prime}
\end{figure}
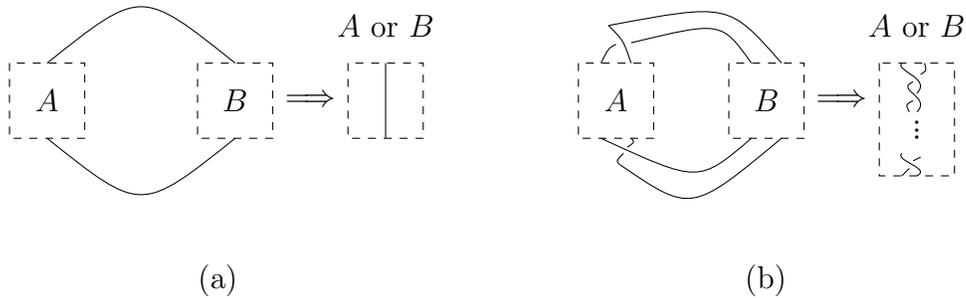
\begin{define}
In a link diagram, a string of bigons, or a single crossing is called a \emph{twist region}.  A link diagram is \emph{twist reduced} if for any simple closed curve in the plane that intersects the link transversely
in four points, with two points adjacent to one crossing and the other two
points adjacent to another crossing, the simple closed curve bounds a subdiagram
consisting of a (possibly empty) collection of bigons strung end to end
between these crossings. See Figure \ref{prime}(b).
\end{define}
\begin{figure}
\centering
\includegraphics[width=5in]{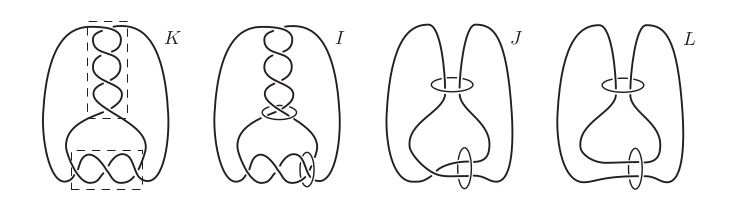}
\\ \hfill \hfill (a) \hfill  \hfill (b) \hfill  \hfill (c) \hfill \hfill (d) \hfill \hfill \hfill 

\caption[$K$ and corresponding FAL]{(a) Link diagram $K$ (b) Crossing circles added at each twist region (c) Augmented Link with all full twists removed (d) fully augmented link $L$ \cite{JP2004}.}
\label{fal}
\end{figure}
\begin{define} A \emph{fully augmented link} (FAL) is a link that is obtained from a
diagram of a link $K$ as follows: 
\begin{enumerate}
    \item 
 augment every twist region with a circle
component (called a \emph{crossing circle}), \item  get rid of all full twists, and \item  remove all remaining half-twists. See Figure \ref{fal}. A diagram obtained above will be referred to as a FAL diagram. The diagram obtained after step (2) is called a FAL diagram with half-twists.
\end{enumerate}

\end{define}
Thus the FAL diagram consists of link components in the projection plane and crossing circle components that are orthogonal to the projection plane and bound twice punctured discs.
In \cite{purcell-fal} Purcell studied
the geometry of FALs using a decomposition of the FAL complement
into a pair of totally geodesic hyperbolic right-angled ideal polyhedra.  We will describe how the geodesic faces of these polyhedra can be seen on the FAL diagrams. 

FAL, while interesting in their own right, enable us to study the geometry of the original knot or link it's built from.

\begin{theorem}
\cite{purcell-fal, adams1986augmented, 2009arXiv0903.5288C} A fully augmented link is hyperbolic if and only if the associated knot or link diagram is non-splittable, prime, twist reduced, with at least two twist regions.
\end{theorem}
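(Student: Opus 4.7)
The plan is to prove both directions, combining arguments from the cited works of Purcell, Adams, and Cheesebro--DeBlois--Wilton. For the forward direction (the FAL being hyperbolic implies the diagrammatic conditions), I would argue by contrapositive: each failure of a diagrammatic condition forces the FAL complement to fail to be hyperbolic. If the underlying diagram $K$ is splittable, then so is the associated FAL, since augmenting twist regions is a local operation that does not affect a splitting sphere; a splittable link complement is reducible, hence not hyperbolic. If the diagram is not prime, the offending simple closed curve intersecting a component transversely in two points still intersects the FAL twice after augmentation, producing an essential sphere or compressing disk. If the underlying link has only one twist region, the resulting FAL complement is Seifert fibered or otherwise contains obvious topological obstructions to admitting a hyperbolic structure. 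Finally, if the diagram is not twist reduced, the simple closed curve meeting the diagram in four points adjacent to two distinct crossings lifts to an essential torus in the FAL complement, violating atoroidality.

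For the backward direction I would follow Purcell's polyhedral decomposition. Given a non-splittable, prime, twist reduced diagram of $K$ with at least two twist regions, slice the FAL complement along the projection sphere together with the twice-punctured disks bounded by the crossing circles. This cuts the complement into two identical pieces, each of which collapses to a combinatorial ideal polyhedron $P$ whose ideal vertices correspond to cusps, whose ``white'' faces come from regions of the projection plane cut out by $K$, and whose ``shaded'' faces come from halves of the twice-punctured disks. The gluing pattern is explicit: white faces glue by the identity across the projection sphere, while each shaded face glues to its partner by reflection in the plane of the corresponding crossing circle.

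The main obstacle, and the technical core of the argument, is verifying the hypotheses of Andreev's theorem on $P$ so that $P$ may be realized as a right-angled ideal polyhedron in $\HH^3$ with all dihedral angles equal to $\pi/2$. Each diagrammatic hypothesis translates into one of Andreev's combinatorial conditions: non-splittability ensures connectedness of $P$, having at least two twist regions ensures that $P$ is genuinely three-dimensional, primality rules out prismatic $2$-circuits involving shaded faces, and twist-reducedness rules out prismatic $4$-circuits. Once Andreev's theorem produces the right-angled ideal polyhedron in $\HH^3$, the face identifications become hyperbolic isometries (reflections and identity maps on totally geodesic faces meeting at right angles), so the identifications glue up coherently and the FAL complement inherits a complete finite-volume hyperbolic metric, as required.
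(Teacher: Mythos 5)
The first thing to say is that the paper does not prove this statement at all: it is quoted as background and attributed to the cited works of Purcell, Adams, and Chesebro--DeBlois--Wilton, and the only ingredient the paper actually uses later is the polyhedral decomposition recorded in Proposition \ref{PLprop}. Your outline follows the same route as those sources (cut--slice--flatten into two identical ideal polyhedra, realize them as right-angled ideal polyhedra, glue by isometries, and handle the converse by exhibiting topological obstructions), so the architecture is the standard one. However, as written there are genuine gaps in both directions, and in a few places the objects you name are not the right ones.

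In the ``only if'' direction: a failure of primality yields a sphere meeting the link in two points, hence an essential annulus or a swallow--follow torus in the complement, not ``an essential sphere or compressing disk''; a failure of twist-reducedness makes two crossing circles isotopic, so the obstruction is an essential annulus cobounded by parallel components rather than a torus ``lifted'' from the four-punctured curve; and the one-twist-region case is simply asserted, whereas one must actually identify the resulting complement (the remaining strands have no crossings, so one gets a Seifert-fibered or toroidal piece). In the ``if'' direction, the step you defer is the entire technical content. Andreev's theorem in the form you invoke concerns compact polyhedra with non-obtuse angles; for the ideal right-angled polyhedron $P_L$ one needs the Koebe--Andreev--Thurston circle packing theorem (or Rivin's characterization) applied to the nerve of the white faces, and its hypothesis is that this nerve is a triangulation of $S^2$ whose $1$-skeleton is a simple, $3$-connected planar graph. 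The actual work --- which you assert rather than carry out --- is showing that a loop or a double edge in that nerve corresponds precisely to a curve violating primality or twist-reducedness, and that connectivity and the two-twist-region hypothesis rule out the degenerate packings. Finally, ``white faces glue by the identity'' is not literally an isometry of $\HH^3$: one must check that each face pairing is realized by an orientation-reversing isometry of the corresponding geodesic plane and then invoke the Poincar\'e polyhedron theorem to get a complete structure. With those points filled in, your sketch becomes the proof in the cited references.
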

We will only consider hyperbolic FALs in this paper. 



 \subsubsection{The Cut-Slice-Flatten Method and Polyhedron $P_L$}

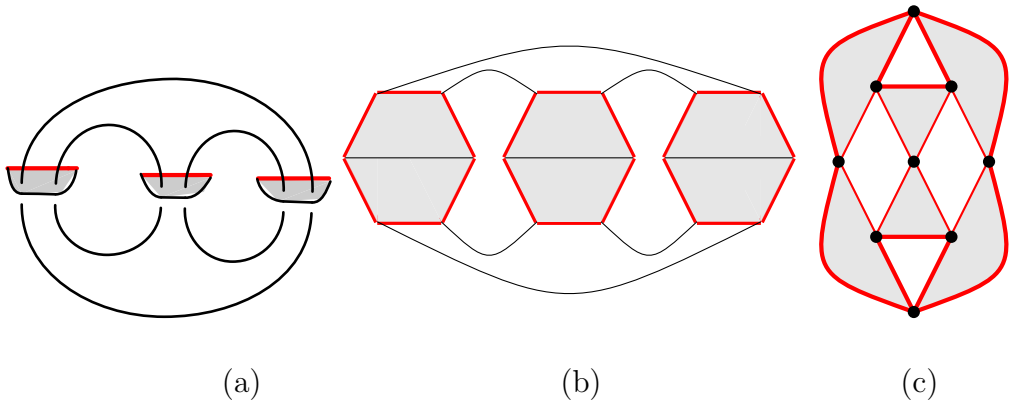
\begin{figure}[t]
\centering
\begin{tikzpicture}[line width=1.0, line cap=round, line join=round, scale=.45]
 \fill[gray!40](7.58, 4.28)--(8.2, 3.62)-- (9.74, 4.21);
 
 \fill[gray!40](9.5, 3.7)--(8.2, 3.62)-- (9.74, 4.21);
 \fill[gray!40]
 (4.14, 4.43)-- (4.74, 3.74)--(6.20, 4.32);
 \fill[gray!40]
 (6, 3.9)-- (4.74, 3.74)--(6.20, 4.32);
 
 \fill[gray!40] (0.21, 4.60) .. controls (0.26, 4.26) and (0.32, 3.86) .. (0.62, 3.85);
    \fill[gray!20](0.62, 3.85) .. controls (0.95, 3.85) and (1.29, 3.84) .. (1.62, 3.83);
    \fill[gray!40](.21,4.6)--(2.23,4.53)--(.62,3.85);
    \fill[gray!40](2.23,4.53)--(.62,3.85)--(1.8,3.9);
\draw[red, ultra thick] (0.2,4.6) -- (.6,4.6);
 \draw[red, ultra thick] (0.7,4.6) -- (2,4.6);
 \draw[red, ultra thick] (2,4.6) -- (2.25,4.6);
 \draw[red, ultra thick] (4.2,4.4) -- (6.2,4.4);
 \draw[red, ultra thick] (7.6,4.3) -- (9.7,4.3);

    \draw (4.75, 4.06) .. controls (4.78, 5.00) and (4.16, 5.84) .. 
         (3.26, 5.88) .. controls (2.35, 5.92) and (1.67, 5.10) .. (1.63, 4.15);
    \draw (1.61, 3.51) .. controls (1.58, 2.70) and (2.28, 2.05) .. 
         (3.12, 2.04) .. controls (3.96, 2.03) and (4.70, 2.61) .. (4.73, 3.43);
  
    \draw (8.36, 3.94) .. controls (8.38, 4.84) and (7.81, 5.66) .. 
          (6.96, 5.70) .. controls (6.11, 5.74) and (5.49, 4.95) .. (5.46, 4.05);
    \draw (5.44, 3.41) .. controls (5.41, 2.57) and (6.03, 1.84) .. 
          (6.85, 1.84) .. controls (7.67, 1.84) and (8.33, 2.49) .. (8.35, 3.30);
  
    \draw (0.62, 4.17) .. controls (0.63, 6.15) and (2.80, 7.20) .. 
          (4.99, 7.24) .. controls (7.22, 7.29) and (9.29, 5.94) .. (9.21, 3.91);
    \draw (9.18, 3.27) .. controls (9.11, 1.29) and (6.99, 0.21) .. 
          (4.83, 0.21) .. controls (2.65, 0.21) and (0.61, 1.51) .. (0.62, 3.53);
  
    \draw (0.21, 4.60) .. controls (0.26, 4.26) and (0.32, 3.86) .. (0.62, 3.85);

    \draw (0.62, 3.85) .. controls (0.95, 3.85) and (1.29, 3.84) .. (1.62, 3.83);
    \draw (1.62, 3.83) .. controls (1.96, 3.83) and (2.18, 4.17) .. (2.23, 4.53);
  
    \draw (4.14, 4.43) .. controls (4.27, 4.11) and (4.43, 3.75) .. (4.74, 3.74);
    \draw (4.74, 3.74) .. controls (4.97, 3.74) and (5.21, 3.73) .. (5.45, 3.73);
    \draw (5.45, 3.73) .. controls (5.79, 3.72) and (6.02, 4.02) .. (6.20, 4.32);

    \draw (7.58, 4.28) .. controls (7.66, 3.91) and (7.97, 3.64) .. (8.35, 3.62);
    \draw (8.35, 3.62) .. controls (8.63, 3.61) and (8.92, 3.60) .. (9.20, 3.59);
    \draw (9.20, 3.59) .. controls (9.49, 3.58) and (9.62, 3.92) .. (9.74, 4.21);

\end{tikzpicture}
\begin{tikzpicture}[scale=.85]
  \draw[line width=0.85mm, red ](0.5,2.5)--(1,1.5);
  \draw[line width=0.85mm, red](0.5,2.5)--(1,3.5);
  \draw[line width=0.85mm, red](2,1.5)--(1,1.5);
  \draw[line width=0.85mm, red](1,3.5)--(2,3.5);
  \draw[line width=0.85mm, red](2,3.5)--(2.5,2.5);
   \draw[line width=0.85mm, red](2,1.5)--(2.5,2.5);
  
   \draw[line width=0.85mm, red](3,2.5)--(3.5,3.5);
   
   \draw[line width=0.85mm, red](3,2.5)--(3.5,1.5);
   \draw[line width=0.85mm, red](4.5,3.5)--(3.5,3.5);
   
   \draw[line width=0.85mm, red](4.5,1.5)--(3.5,1.5);
   \draw[line width=0.85mm, red](4.5,3.5)--(5,2.5);
   \draw[line width=0.85mm, red](4.5,1.5)--(5,2.5);
   
    \draw[line width=0.85mm, red](6,1.5)--(5.5,2.5);
    
   \draw[line width=0.85mm, red](6,3.5)--(5.5,2.5);
   
   \draw[line width=0.85mm, red](6,1.5)--(7,1.5);
    \draw[line width=0.85mm, red](6,3.5)--(7,3.5);
   
    \draw[line width=0.85mm, red](7.5,2.5)--(7,1.5);
   
   \draw[line width=0.85mm, red](7.5,2.5)--(7,3.5); 
   \fill[gray!20](7.5,2.5)--(7,3.5)--(7,1.5);
  \fill[gray!20] 
    (0.5,2.5)--(1,1.5)--(0.5,2.5)--(1,3.5)--(2,1.5)--(1,1.5)--(1,3.5)--(2,3.5)--(2,3.5)--(2.5,2.5)--(2,1.5)--(2.5,2.5);
  \fill[gray!20](.5,2.5) --(1,1.5)--(1,2.5);
  
  \fill[gray!20](1.5,2.5) --(2,1.5)--(2.5,2.5);
  
  \fill[gray!20]  (3,2.5)--(3.5,3.5)--(3.5,1.5)--(3,2.5)--(4.5,3.5)--(3.5,3.5)--(4.5,3.5)--(5,2.5)--(4.5,1.5)--(5,2.5);
  \fill[gray!20] (3.5,3.5) rectangle (4.5,1.5);
 \fill[gray!20]  (4.5,1.5)--(5,2.5)--(4.5,2.5);

  \fill[gray!20]  
  (6,1.5)--(5.5,2.5)--(6,3.5)--(5.5,2.5)--(6,1.5)--(7,1.5)--(6,3.5)--(7,3.5)--(7.5,2.5)--(7,1.5)--(7.5,2.5)--(7,3.5); 
 \fill[gray!20]  
   (6,1.5)--(5.5,2.5)--(6,3.5)--(7.5,2.5);
 \fill[gray!20]  (7,1.5)--(6.5,2.5)--(7,3.5);
  \draw[](1,1.5).. controls (4,0) .. (7,1.5); 
  \draw[](1,3.5).. controls (4,4.5) .. (7,3.5);
  
   \draw[](2,3.5).. controls (2.75,4) .. (3.5,3.5);

  \draw[](2,1.5).. controls (2.75,0.8) .. (3.5,1.5);
    \draw[](4.5,1.5).. controls (5.25,0.8) .. (6,1.5);
  \draw[](4.5,3.5).. controls (5.25,4) .. (6,3.5);
  \draw[](.5,2.5)--(2.5,2.5);
  
   \draw[](3,2.5)--(5,2.5);
   \draw[](5.5,2.5)--(7.5,2.5);
  
  \end{tikzpicture}
    \begin{tikzpicture}
  
   \draw[red, ultra thick](1,2.5)--(1.5,1.5); 
    
     \draw[red, ultra thick](1,2.5)--(1.5,3.5); 
    
     \draw[red, ultra thick](2,2.5)--(1.5,1.5); 
     \draw[red, ultra thick](2,2.5)--(1.5,3.5); 
     \draw[red, ultra thick](2,2.5)--(2.5,1.5); 
    
     \draw[red, ultra thick](2,2.5)--(2.5,3.5); 
     \fill[gray!20]
     (2,2.5)--(1.5,1.5)--(2.5,1.5);
     \fill[gray!20]
     (2,2.5)--(1.5,3.5)--(2.5,3.5);
     
    \draw[red, ultra thick](3,2.5)--(2.5,1.5); 
     
     \draw[red, ultra thick](3,2.5)--(2.5,3.5); 
      \draw[red, ultra thick](1.5,1.5)--(2.5,1.5); 
        \draw[red, ultra thick](1.5,3.5)--(2.5,3.5); 
     
     \fill[gray!20](2,4.5).. controls (.6,4)..(1,2.5);
     \draw[red, ultra thick](2,4.5).. controls (.6,4)..(1,2.5);
     
    \fill[gray!20](2,.5).. controls (.6,1)..(1,2.5);

     \draw[red, ultra thick](2,.5).. controls (.6,1)..(1,2.5);
     
    \fill[gray!20](2,4.5).. controls (3.4,4)..(3,2.5);

     \draw[red, ultra thick](2,4.5).. controls (3.4,4)..(3,2.5);
     \fill[gray!20](3,2.5).. controls (3.4,1)..(2,.5);
      \draw[red, ultra thick](3,2.5).. controls (3.4,1)..(2,.5);
      
      \draw[red, ultra thick](2,4.5)--(1.5,3.5);
      \draw[red, ultra thick](2,.5)--(1.5,1.5);
     \draw[red, ultra thick](2,.5)--(2.5,1.5);
     \draw[red, ultra thick](2,4.5)--(2.5,3.5);
     
     \fill (1,2.5) circle (.081);
    \fill (2,2.5) circle (.081);
      \fill (3,2.5) circle (.081);
     \fill (1.5,1.5) circle (.081);  \fill (2.5,1.5) circle (.081);  \fill (1.5,3.5) circle (.081);  \fill (2.5,3.5) circle (.081);
    \fill (2,.5) circle (.081);   \fill (2,4.5) circle (.081);
  
  \end{tikzpicture}
\hfill \hfill (a) \hfill \hfill (b) \hfill \hfill (c) \hfill \hfill
 
\caption[cut-slice-flatten]{Polyhedral decomposition of a FAL using the cut-slice-flatten method.
(a) Cut the FAL complement in half along the projection
plane. This also cuts the crossing circles and the bounded twice
punctured discs in half. (b) Slice open the half discs and
flattened down on the projection plane. (c) Polyhedron $P_L$ is obtained by contracting the link components to ideal
vertices. }
\label{csf}
\end{figure}

Given a FAL diagram $L$, we can obtain the polyhedra decomposition by
using a construction given by Agol and D. Thurston in \cite{L2004}
called the \emph{cut-slice-flatten method}.  Assume that the twice
punctured discs are perpendicular to the plane. First, cut the link
complement in half along the projection plane, which cuts the twice
punctured disc bounded by the crossing circle into half. This creates
a pair of polyhedra, see Figure \ref{csf}(a).  For each half, slice
open the half disc like a pita bread and flatten it down on the
projection plane, see Figure \ref{csf}(b).  Lastly, shrink the link
components to ideal vertices, see Figure \ref{csf}(c).  This gives us
two copies of a polyhedron which we denote as $P_L$.  For each
crossing circle we get a \emph{bowtie} on each copy of $P_L$, which
consists of two triangular faces that share the ideal vertex
corresponding to the crossing circle component. The cut-slice-flatten
method is part of the proof of Proposition 2.2 in \cite{purcell-fal},
which we state below:

\begin{prop}\label{PLprop}
  \cite{purcell-fal, 2009arXiv0903.5288C} Let $L$ be a hyperbolic FAL
  diagram.  There is a decomposition of $S^3 \setminus L$ into two
  copies of geodesic, ideal, hyperbolic polyhedron $P_L$ with the
  following properties.
\begin{enumerate}
\item Faces of $P_L$ can be checkerboard colored, with shaded faces
  corresponding to bowties, and white faces corresponding to the
  regions of the FAL components in the projection plane.
    \item Ideal vertices of $P_L$ are all $4$\textendash valent.
    \item The dihedral angle at each edge of $P_L$ is $\frac{\pi}{2}$. 
\end{enumerate}
\end{prop}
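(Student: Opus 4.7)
The plan is to construct $P_L$ explicitly via the cut-slice-flatten procedure sketched in Figure \ref{csf} and then verify each of the three listed properties in turn. Two geometric ingredients must be established before the combinatorics can be run. First, the projection plane minus $L$ is totally geodesic in $S^3\setminus L$: because all half-twists have been removed, reflection across the projection plane extends to an orientation-reversing involution of $S^3\setminus L$, and the fixed set of such an involution on a hyperbolic 3-manifold is totally geodesic. Second, each twice-punctured disk bounded by a crossing circle is totally geodesic; this follows from Adams' theorem (the double of such a disk along its boundary is an essential thrice-punctured sphere, which is unique up to isotopy and isotopic to a totally geodesic one). Moreover, these surfaces meet one another orthogonally, since they are the fixed sets of the commuting reflections across the projection plane and across the disk planes of the crossing circles; fixed sets of commuting involutions in a Riemannian manifold meet orthogonally whenever each is totally geodesic.

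Next I would cut $S^3\setminus L$ simultaneously along the projection plane and along all twice-punctured disks. The projection-plane reflection interchanges the two resulting halves isometrically, so it suffices to analyze one side. Each half still contains the half-disks attached along the crossing circles; slicing each half-disk open along the vertical axis through its crossing-circle puncture and laying it flat in the projection plane (Figure \ref{csf}(b)), then contracting each arc of $L$ in the projection plane to an ideal vertex, produces an ideal polyhedron $P_L$. By construction, the shaded faces are the triangular halves of the crossing disks, grouped into bowties at each ideal vertex coming from a crossing circle, and the white faces are the complementary regions of the projection plane; this proves (1).

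For (2), I would inspect each ideal vertex locally. A vertex coming from an arc of $L$ in the projection plane has two white faces, one on each side of the arc, together with two shaded triangles, one from each bowtie at either endpoint, giving valence $4$. A vertex coming from a crossing circle is the common tip of the two triangles of its bowtie and of the two adjacent white regions, again valence $4$. For (3), every edge of $P_L$ lies on the intersection of two of the totally geodesic cutting surfaces recalled above, and the dihedral angle at that edge is precisely the angle at which those surfaces meet, which we have already shown to be $\pi/2$.

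The main obstacle is rigorously justifying the orthogonality statement: it is the only step that is not purely combinatorial and the only one that uses hyperbolic rather than merely topological information. I would handle it by exhibiting, for each crossing circle, two commuting orientation-reversing isometries of $S^3\setminus L$ (reflection across the projection plane and reflection across the disk plane of the crossing circle), each with totally geodesic fixed set, and then invoking the general fact that two commuting involutive isometries with totally geodesic fixed sets produce orthogonal intersections wherever their fixed sets cross. Applied uniformly over all crossing circles, this forces every dihedral angle of $P_L$ to equal $\pi/2$ and completes the proof.
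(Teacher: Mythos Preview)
The paper does not actually prove this proposition; it is quoted from \cite{purcell-fal} and \cite{2009arXiv0903.5288C}, with the preceding paragraph only describing the cut--slice--flatten construction that underlies the cited proof. Your outline follows that same construction, so at the level of overall strategy you are aligned with the original source.

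There is, however, a genuine gap in your orthogonality step. You assert that for each crossing circle there is an orientation-reversing isometry of $S^3\setminus L$ given by ``reflection across the disk plane of the crossing circle,'' and then use the commuting-involutions argument. Such a global reflection need not exist: for a generic FAL the portion of the link on one side of a given crossing disk is not isotopic to the mirror image of the portion on the other side, so there is no reason the obvious topological reflection should be a symmetry of the link, let alone an isometry of the complement. The correct argument uses only the single reflection $\sigma$ across the projection plane. That reflection preserves each twice-punctured disk \emph{setwise} (it fixes the crossing circle and the two strands), and since the totally geodesic representative of a thrice-punctured sphere is unique up to isotopy, $\sigma$ must carry the geodesic disk to itself. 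A totally geodesic surface preserved setwise by an isometric reflection must meet the fixed-point set of that reflection orthogonally wherever they intersect, which gives the $\pi/2$ dihedral angles.

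A smaller point: the twice-punctured disk is already a thrice-punctured sphere in $S^3\setminus L$ (the crossing-circle boundary becomes a puncture once the link is removed), so there is no need to double it. Your phrasing ``the double of such a disk along its boundary is an essential thrice-punctured sphere'' is not the right picture, though the conclusion you draw from Adams' theorem is correct.
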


\subsubsection{Gluing the Polyhedra}
For FAL with or without half-twists the polyhedron $P_L$ is the same.
The difference is in how they glue up. For FAL without half-twist the
shaded faces glue up such that the bowties on each polyhedron glue to
each other, see Figure \ref{bowtieglue} leftmost, and then the white faces on
each polyhedron get glued to their respective copies.  Whereas in the
case a half-twist occurs, the shaded faces get glued to the opposite
shaded face on the other polyhedron, see Figure \ref{bowtieglue} rightmost, and
then the white faces on each polyhedron get glued to their respective
copies.  Right handed and left handed twists produce the same link
complement due to the presence of the crossing circle as one can
add/delete full twists without changing the link complement. In \S4 we
use this gluing to study the fundamental domain of a cusp.

\begin{figure}
    \begin{tikzpicture}[thick,scale=0.2, every node/.style={transform shape}][line width=2.5, line cap=round, line join=round]
   \fill[gray!20](5,7.4) ellipse (4.5 and 3.7);
    \draw (6.91, 4.16) .. controls (8.49, 4.08) and (9.60, 5.62) .. 
          (9.62, 7.32) .. controls (9.63, 8.94) and (9.07, 10.62) .. (7.66, 10.63);
    \draw (6.74, 10.64) .. controls (5.60, 10.66) and (4.46, 10.67) .. (3.32, 10.68);
    \draw (2.40, 10.69) .. controls (1.03, 10.71) and (0.37, 9.15) .. 
          (0.33, 7.60) .. controls (0.29, 5.99) and (1.15, 4.45) .. (2.64, 4.37);
    \draw (2.64, 4.37) .. controls (4.06, 4.30) and (5.48, 4.23) .. (6.91, 4.16);
 \draw[ultra thick, red] (0.3,7.5) -- (2.6,7.5);
 \draw[ultra thick, red] (2.9,7.5) -- (6.9,7.5) ;
 \draw[ultra thick, red] (7.1,7.5) -- (9.7,7.5) ;

    \draw (3.01, 14.95) .. controls (2.96, 13.53) and (2.91, 12.11) .. (2.86, 10.69);
    \draw (2.86, 10.69) .. controls (2.79, 8.73) and (2.72, 6.78) .. (2.65, 4.83);
    \draw (2.62, 3.91) .. controls (2.58, 2.78) and (2.54, 1.64) .. (2.50, 0.51);
 
    \draw (7.40, 15.00) .. controls (7.33, 13.55) and (7.27, 12.09) .. (7.20, 10.64);
    \draw (7.20, 10.64) .. controls (7.11, 8.63) and (7.02, 6.63) .. (6.93, 4.62);
    \draw (6.89, 3.70) .. controls (6.84, 2.57) and (6.79, 1.44) .. (6.73, 0.31);
  \node at (5,6) {\huge$A$}; 
    \node at (5,9) {\huge$B$};

    
    
\end{tikzpicture}
\hspace{1cm}
\begin{tikzpicture}[thick,scale=0.6, every node/.style={transform shape}]
 \draw [red, ultra thick] (0,0) -- (1,2);
 \draw[red, ultra thick] (0,0) -- (2,0);
 \draw[red, ultra thick] (2,0) -- (1,2);

 \draw[red, ultra thick] (1,2) -- (0,4);
 \draw[red, ultra thick] (1,2) -- (2,4);
\draw[red, ultra thick] (0,4) -- (2,4);

 \fill [gray!30] (1,2) -- (0,4) -- (2,4);
 \fill [gray!30] (1,2) -- (0,0) -- (2,0);
  \fill  (0,0) circle (.1cm);
 \fill  (2,0) circle (.1cm);
 \fill  (0,4) circle (.1cm);
 \fill  (2,4) circle (.1cm);
 \fill  (1,2) circle (.1cm);
 
 \draw [red, ultra thick] (3,0) -- (4,2);
 \draw[red, ultra thick] (3,0) -- (5,0);
 \draw[red, ultra thick] (5,0) -- (4,2);

 \draw[red, ultra thick] (4,2) -- (3,4);
 \draw[red, ultra thick] (4,2) -- (5,4);
\draw[red, ultra thick] (3,4) -- (5,4);

 \fill [gray!30] (4,2) -- (3,4) -- (5,4);
 \fill [gray!30] (4,2) -- (3,0) -- (5,0);
  \fill  (3,0) circle (.1cm);
 \fill  (5,0) circle (.1cm);
 \fill  (3,4) circle (.1cm);
 \fill  (5,4) circle (.1cm);
 \fill  (4,2) circle (.1cm);

 \node at (.3,3.3) {\large$\vee$}; 
 \node at (.3,.7) {\large$\wedge$};
\node at (1.7,3.3) {\large$\vee$}; 
\node at (1.7,3.1) {\large$\vee$};  

\node at (1.7,.7) {\large$\wedge$}; 
\node at (1.7,.9) {\large$\wedge$};  
\node at (1,4)
 {\large\textgreater};
 \node at (1.2,4)
 {\large\textgreater};
\node at (1.4,4)
 {\large\textgreater}; 
 
 \node at (1,0)
 {\large\textgreater};
 \node at (1.2,0)
 {\large\textgreater};
\node at (1.4,0)
 {\large\textgreater}; 
 \node at (1,3) {$A$};
 \node at (1,1) {$A$};

 \node at (3.3,3.3) {\large$\bigtriangledown $}; 
 \node at (3.3,.7) {\large$\bigtriangleup $};
\node at (4.7,3.3) {\large$\bigtriangledown $}; 
\node at (4.7,3.1) {\large$\bigtriangledown $};  

\node at (4.7,.7) {\large$\bigtriangleup $}; 
\node at (4.7,.9) {\large$\bigtriangleup $};  
\node at (4,4)
 {\large$\rhd∗$};
 \node at (4.2,4)
 {\large$\rhd∗$};
\node at (4.4,4)
 {\large$\rhd∗$}; 
 
 \node at (4,0)
 {\large$\rhd∗$};
 \node at (4.2,0)
 {\large$\rhd∗$};
\node at (4.4,0)
 {\large$\rhd∗$}; 
 \node at (4,3) {$B$};
 \node at (4,1) {$B$};

\end{tikzpicture} 
\hspace{2cm}
\begin{tikzpicture}[scale=.2][line width=9.5, line cap=round, line join=round]
  \fill[gray!20](5,12.2) ellipse (5 and 4);
    \draw (2.56, 15.83) .. controls (1.00, 15.88) and (0.70, 13.88) .. 
          (0.59, 12.08) .. controls (0.46, 10.21) and (1.09, 8.28) .. (2.69, 8.30);
    \draw (2.69, 8.30) .. controls (4.02, 8.31) and (5.35, 8.32) .. (6.69, 8.34);
    \draw (6.69, 8.34) .. controls (8.39, 8.35) and (9.38, 10.14) .. 
          (9.40, 12.00) .. controls (9.43, 13.75) and (9.08, 15.64) .. (7.57, 15.69);
    \draw (6.35, 15.72) .. controls (5.48, 15.75) and (4.61, 15.77) .. (3.75, 15.80);
 
    \draw (8.61, 1.42) .. controls (7.66, 2.14) and (6.72, 2.87) .. (5.77, 3.59);
    \draw (4.66, 4.44) .. controls (3.54, 5.29) and (2.61, 6.46) .. (2.67, 7.84);
    \draw (2.72, 9.05) .. controls (2.81, 11.31) and (2.90, 13.56) .. (3.00, 15.82);
    \draw (3.00, 15.82) .. controls (3.06, 17.29) and (3.12, 18.76) .. (3.18, 20.23);
  
    \draw (7.36, 20.23) .. controls (7.27, 18.72) and (7.19, 17.21) .. (7.10, 15.70);
    \draw (7.10, 15.70) .. controls (6.98, 13.50) and (6.85, 11.29) .. (6.73, 9.09);
    \draw (6.65, 7.74) .. controls (6.58, 6.37) and (6.16, 5.00) .. (5.17, 4.05);
    \draw (5.17, 4.05) .. controls (3.95, 2.87) and (2.73, 1.68) .. (1.50, 0.50);
  \draw[ultra thick, red] (0.3,12) -- (2.6,12);
 \draw[ultra thick, red] (2.9,12) -- (6.9,12) ;
 \draw[ultra thick, red] (7.1,12) -- (9.7,12) ;
 \node at (5,10) {\tiny$A$};
\node at (5,14) {\tiny$B$};
\end{tikzpicture}
\hspace{1cm}
\begin{tikzpicture}[thick,scale=0.6, every node/.style={transform shape}]
 \draw [red, ultra thick] (0,0) -- (1,2);
 \draw[red, ultra thick] (0,0) -- (2,0);
 \draw[red, ultra thick] (2,0) -- (1,2);

 \draw[red, ultra thick] (1,2) -- (0,4);
 \draw[red, ultra thick] (1,2) -- (2,4);
\draw[red, ultra thick] (0,4) -- (2,4);

 \fill [gray!30] (1,2) -- (0,4) -- (2,4);
 \fill [gray!30] (1,2) -- (0,0) -- (2,0);
  \fill  (0,0) circle (.1cm);
 \fill  (2,0) circle (.1cm);
 \fill  (0,4) circle (.1cm);
 \fill  (2,4) circle (.1cm);
 \fill  (1,2) circle (.1cm);
 
 \draw [red, ultra thick] (3,0) -- (4,2);
 \draw[red, ultra thick] (3,0) -- (5,0);
 \draw[red, ultra thick] (5,0) -- (4,2);

 \draw[red, ultra thick] (4,2) -- (3,4);
 \draw[red, ultra thick] (4,2) -- (5,4);
\draw[red, ultra thick] (3,4) -- (5,4);

 \fill [gray!30] (4,2) -- (3,4) -- (5,4);
 \fill [gray!30] (4,2) -- (3,0) -- (5,0);
  \fill  (3,0) circle (.1cm);
 \fill  (5,0) circle (.1cm);
 \fill  (3,4) circle (.1cm);
 \fill  (5,4) circle (.1cm);
 \fill  (4,2) circle (.1cm);

 \node at (.3,3.3) {\large$\vee$}; 
 \node at (.3,.7) {\large$\bigtriangleup$};
 \node at (.3,.9) {\large$\bigtriangleup$};
\node at (1.7,3.3) {\large$\vee$}; 
\node at (1.7,3.1) {\large$\vee$};  

\node at (1.7,.7) {\large$\bigtriangleup$}; 
 
\node at (1,4)
 {\large\textgreater};
 \node at (1.2,4)
 {\large\textgreater};
\node at (1.4,4)
 {\large\textgreater}; 
 
 \node at (1,0)
 {\large$\lhd∗r$};
 \node at (1.2,0)
 {\large$\lhd∗$};
\node at (1.4,0)
 {\large$\lhd∗$}; 
 \node at (1,3) {$B$};
 \node at (1,1) {$A$};

 \node at (3.3,3.3) {\large$\bigtriangledown $}; 
 \node at (3.3,.7) {\large$\wedge $};
  \node at (3.3,.9) {\large$\wedge $};
\node at (4.7,3.3) {\large$\bigtriangledown $}; 
\node at (4.7,3.5) {\large$\bigtriangledown $};

\node at (4.7,.7) {\large$\wedge $}; 

\node at (4,4)
 {\large$\rhd∗$};
 \node at (4.2,4)
 {\large$\rhd∗$};
\node at (4.4,4)
 {\large$\rhd∗$}; 
 
 \node at (4,0)
 {\large\textless};
 \node at (4.2,0)
 {\large\textless};
\node at (4.4,0)
 {\large\textless}; 
 \node at (4,3) {$A$};
 \node at (4,1) {$B$};

\end{tikzpicture}
    \caption{Gluing bowties without half-twist (leftmost three figures) and when half-twist are present (rightmost three figures).}
    \label{bowtieglue}
\end{figure}
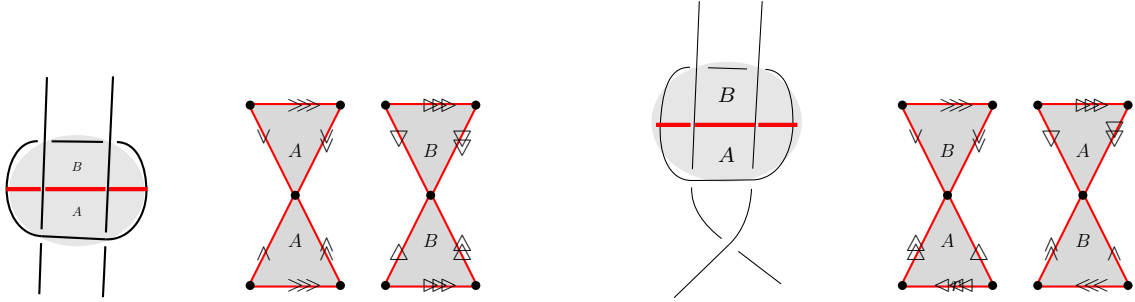

\subsubsection{Circle Packings and Cusp Shapes}

\begin{define}
A \emph{circle packing} is a finite collection of circles inside a given boundary such that no two overlap and some (or all) of them are mutually tangent.
\end{define} 

The geometry of FAL
complements is studied
using the hyperbolic structure
on $P_L.$ Since all faces of $P_L$ are geodesic, for each face, the hyperbolic plane it lies on determines a circle or line in $\mathbb{C} \cup \{ \infty \}.$ Purcell showed that there is a corresponding circle packing for the white geodesic faces of $P_L$, and a dual circle packing for the geodesic shaded faces of $P_L$. We can visualize $P_L$ if you place the two circle packings on top of one another, and intersect it with half-spaces in $\HH^3$.

In \cite{purcell-fal} Purcell described a technique to compute cusp shapes of FALs by examining the circle packings and the gluing of polyhedra. The main result of this paper is that we can extend the T-T method to fully augmented links, and determine the cusp shapes of FALs by solving an algebraic system of equations, see \S4. Since the equations are obtained directly from the FAL diagram, we can directly relate
the combinatorics of FAL diagrams
and the geometry of FAL complements.

\subsection{T-T Method}
We will work in the upper half-space model of hyperbolic 3-space $\HH^3$.

\begin{define}\label{tautdef} \cite{tt} A diagram of a hyperbolic link is \emph{taut} if each associated checkerboard
surface is incompressible and boundary incompressible in the link complement, and
moreover does not contain any simple closed curve representing an accidental parabolic.
\end{define} 

The taut condition implies that the faces in the diagram correspond to ideal polygons in $\HH^3$ with distinct vertices. 
Let $L$ be a taut, oriented link diagram of a hyperbolic link. A \emph{crossing} arc is an arc which runs from the overcrossing to the undercrossing. Let $R$ be a face in $L$ with $n$ crossings.  Then $R$ corresponds to an ideal polygon $F_R$ in $\mathbb{H}^3$ as follows: 
\begin{enumerate}

\item Distinct edges of $L$ around the boundary of  $R$ lift to distinct ideal vertices in $\HH^3$ because of the no accidental parabolic condition in Definition \ref{tautdef}.
 \item The lifts of the crossing arcs can be straightened out in $\HH^3$ to geodesic edges giving the ideal polygon $F_R$. See Figure \ref{FandPF}. 
\end{enumerate}

\begin{figure}
\begin{tabular}{llc}

 \begin{subfigure}[b]{0.4\textwidth}  
\begin{tikzpicture}[thick,scale=0.6, every node/.style={transform shape}]
\draw [very thick] (0,1.8) .. controls (-.3,6) .. (0,7.8);
\draw [thick, red] (0,7.4) -- (.5,8);
\draw[thick, red] (6,8) -- (6.8,7.5);
\draw[thick, red] (0,2) -- (1,1.2);
\draw[thick, red] (6,.65)--(6.5,2);
\draw[thick, red] (6.6,4.8)--(6.8,6.4);
\draw[very thick] (0,1.1) .. controls (-.051,-.5)..(.1,-1);
\draw [ very thick] (.1,8.3) .. controls (.15, 8.7) .. (.3,9);
\draw[ very thick] (-1,8.15) .. controls (4,7.9).. (6.7,8.1);
\draw[very thick] (7.1, 8.15) .. controls (7.5, 8.3) .. (7.9, 8.6);
\draw[very thick] (7,9) .. controls (6.7,7).. (7,4);
\draw[very thick] (6.9,-.6) .. controls (6.3,3.1).. (6.7,5.2);
\draw[very thick] (7,5.6) .. controls (7.5,6.2).. (8,6.5);
\draw [very thick] (-1.5,2) .. controls (3,.5) .. (6.45,.7);
\draw [very thick] (6.9,.8) .. controls (7.2,.85) .. (7.8,1.1);
\node at (3,5) {\huge$R$};\
\node at (1,1.9) {\huge$\omega_5$};
\node at (.5,4.7) {\huge$u_5$};
\node at (.9,7.1) {\huge$\omega_4$};
\node at (2.9,7.4) {\huge$u_4$};
\node at (6.1,7.5) {\huge$\omega_3$};
\node at (6.3,6.7) {\huge$u_3$};
\node at (6.2,5.6) {\huge$\omega_2$};
\node at (5.9,3.2) {\huge$u_2$};
\node at (5.6,1.5) {\huge$\omega_1$};
\node at (3,1.3) {\huge$u_1$};
\end{tikzpicture}
\end{subfigure}
\hspace{1cm}
\begin{subfigure}[b]{0.4\textwidth}
\begin{tikzpicture}[thick,scale=0.6, every node/.style={transform shape}]
\fill [gray!30] (0,2.9) circle (.8cm);
\fill [black] (0,2.2) circle (.1cm);
\fill [gray!30] (4,1) circle (.9cm);
\fill [gray!30] (6,6.9) circle (.7cm);
\fill [black] (4,.2) circle (.1cm);
\fill [gray!30] (1,6.9) circle (.8cm);
\draw [thick, red] (0,2.2) .. controls (2,5) ..  (4,.2);
\fill [gray!30](6.8,3.5) circle (.8cm);
\fill [black] (6.9,2.75) circle (.1cm);
\draw [thick, red] (4,.2) .. controls (5,5) .. (6.9,2.75);
\fill [black] (5.95,6.35) circle (.1cm);
\draw [thick, red] (6.9,2.75) .. controls (7,7.5) .. (5.95,6.35);
\draw [thick, red] (5.95,6.35).. controls (3.5,9.5) .. (1,6.2);

\fill [black] (1,6.2) circle (.1cm);
\draw [very thick, blue] (-.1,3.55) .. controls (.4,3.54)..(.65,3.1);
\draw [very thick, blue] (3.45,1.5) .. controls (3.8,1.8)..(4.3,1.65);
\draw [very thick, blue] (6.1,3.7) .. controls (6.5,4.2)..(6.9,4.1);
\draw [very thick, blue] (5.3,7.15) .. controls (6,6.9)..(6.65,7);
\draw [very thick, blue] (.2,7.1) .. controls (1,6.8)..(1.7,7.15);
\draw[thick, red] (1,6.2) .. controls (-.5,8) .. (0,2.2);
 \draw (-3.3,-.5) -- (8,-.5);
\draw (-3.3,-.5) -- (-.3, 9);
\draw (-.3, 9 ) -- (10,9 );
\draw (8,-.5) -- (10,9);
\node at (3,5.5) {\huge$F_R$};
\node at (2,4.4) {\huge$\omega_1$};
\node at (3.9,2.2) {\huge$u_2$};
\node at (4.8,4.6) {\huge$\omega_2$};
\node at (6,4.5) {\huge$u_3$};
\node at (6.2,5.5) {\huge$\omega_3$};
\node at (6,7.5) {\huge$u_4$};
\node at (1,7.5) {\huge$u_5$};
\node at (0.5,4) {\huge$u_1$};
\node at (3.8,7.8) {\huge$\omega_4$};
\node at (0.3,5.6) {\huge$\omega_5$};
\end{tikzpicture}
\end{subfigure}\\
  \quad  \quad \quad  \quad  \quad  \quad (a)  \quad  \quad \quad  \quad   \quad  \quad  \quad  \quad  \quad  \quad  \quad  \quad   \quad  \quad  \quad  \quad (b)   \\
\end{tabular}
    \caption[$R$ and $F_R$]{(a) Face (region) $R$ in a link diagram. (b) The corresponding ideal polygon $F_R$ in $\HH^3$.}
    \label{FandPF}
\end{figure}
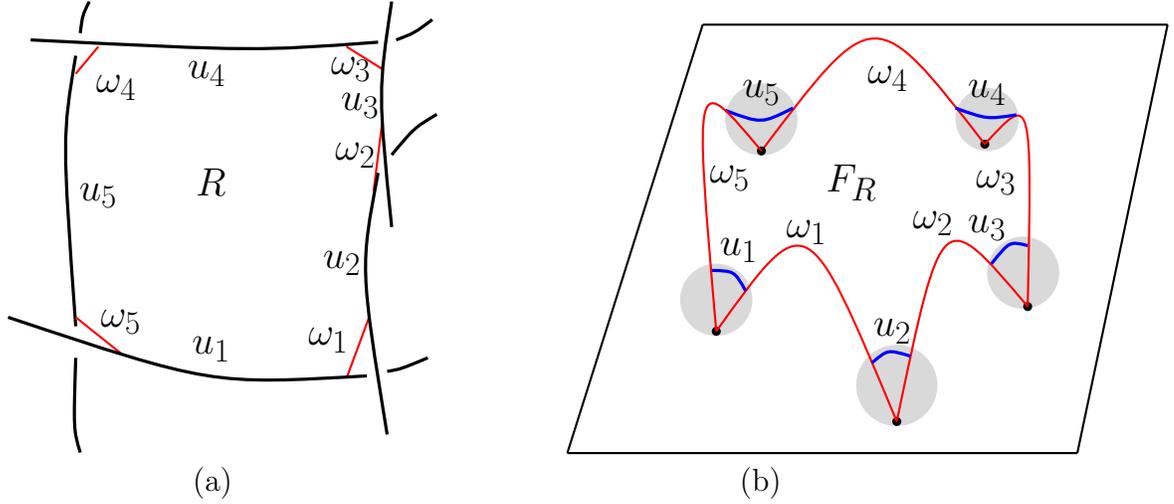

\begin{remark} 
Although the vertices of $F_R$ are ideal and the edges are geodesic, the face of $F_R$ need not be geodesic, i.e $F_R$ needs not lie on a hyperbolic plane in $\HH^3$.
\end{remark}

There are two types of parameters we will focus on in $R$. The first type of parameter is assigned to each crossing in $R$ and is known as the \emph{crossing label}, also referred to in the literature as \emph{crossing geodesic parameter}, or \emph{intercusp geodesic parameter}, denoted by $\omega_i$.  The second parameter we will focus on is assigned to the edges of $L$ in $R$, and is known as an \emph{edge label}, also referred to in the literature as \emph{translational geodesic parameter}, or \emph{edge parameter} and  denoted by $u_j$. See Figure \ref{FandPF}. 

\begin{remark} When we are in the diagram we refer to $\omega_i$ and $u_j$ as crossing and edge labels respectively.  When we are in $\HH^3$ we refer to them as intercusp and translational parameters, respectively. 
\end{remark}

We will choose a set of horospheres in $\HH^3$ such that for every cusp the meridian curve on the cross-sectional torus has length one.
Furthermore, we will choose one horosphere to be the Euclidean plane
$z = 1$.  It follows from
results of Adams
on waist size
of hyperbolic 3-manifolds \cite{Adams2002} that the horoballs are at most tangent and have disjoint interiors. 

 The lift of the crossing arc is a geodesic $\gamma$ in $\HH^3$ which is an edge of the ideal polygon $F_R$ and which travels from the center of one horoball to the center of an adjacent horoball.  For each horosphere the meridional direction along with geodesic $\gamma$ defines a hyperbolic half-plane.  The intercusp parameter $\omega_{\gamma}$, is defined as 
$|\omega_{\gamma}| = e^{-d} $ where $d$ is the hyperbolic distance between the horoballs along the geodesic $\gamma$, and the argument of $\omega_{\gamma}$ is the dihedral angle between these two half-planes, both of which contain $\gamma$. $\omega_\gamma$ encodes information about the intercusp translation taking into account distance and angles formed by parallel transport. The isometry that maps one horoball to another is represented up to conjugation by the $2 \times 2$ matrix 
$ \begin{bmatrix}
0&\omega_\gamma
\\1&0 
\end{bmatrix} $  in $GL(2,\mathbb{C})$, which maps horosphere $H_2$ to horosphere $H_1$ in Figure \ref{W}(a).

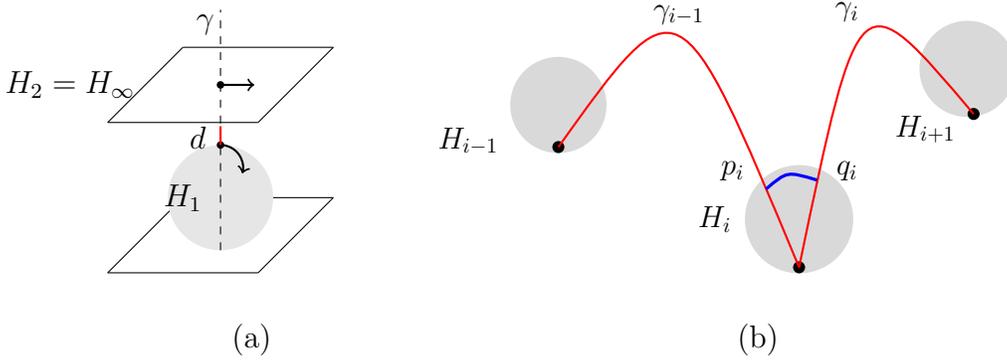
\begin{figure}
\centering
\begin{tikzpicture}
\draw (0,0) -- (1,1);
\draw (0,0)--(2,0);
\draw (2,0)--(3,1);
\draw (1,1)--(3,1);

\draw (0,2) -- (1,3);
\draw (0,2)--(2,2);
\draw (2,2)--(3,3);
\draw (1,3)--(3,3);
\fill [gray!20](1.5,1) circle (.7cm);
\draw [dashed](1.5,.3) -- (1.5,3.5);
\draw[thick, ->] (1.5,1.7) arc (90:-15:.3);
\fill (1.5,1.7) circle (.05cm);
\fill (1.5,2.5) circle (.05cm);
\draw[thick, ->] (1.5,2.5) -- (1.95,2.5);

\node at (1,1) {$H_1$};
\node at (1.3,3.3) {$\gamma$};
\node at (-.5,2.5) {$H_2=H_\infty$};
\draw [red, thick] (1.5,1.7) -- (1.5,1.95);
\node at (1.2,1.8) {$d$};
\end{tikzpicture}
\hspace{1cm}
\begin{tikzpicture}[thick,scale=.8, every node/.style={transform shape}]

\fill [gray!30] (0,2.9) circle (.8cm);
\fill [black] (0,2.2) circle (.1cm);
\fill [gray!30] (4,1) circle (.9cm);

\fill [black] (4,.2) circle (.1cm);

\draw [thick, red] (0,2.2) .. controls (2,5) ..  (4,.2);
\fill [gray!30](6.8,3.5) circle (.8cm);
\fill [black] (6.9,2.75) circle (.1cm);
\draw [thick, red] (4,.2) .. controls (5,5) .. (6.9,2.75);

\draw [very thick, blue] (3.45,1.5) .. controls (3.8,1.8)..(4.3,1.65);

\node at (2,4.4) {\large$\gamma_{i-1}$};
\node at (2.6,1) {\large$H_i$};
\node at (4.8,4.45) {\large$\gamma_i$};
\node at (2.9,1.8) {\large$p_i$};
\node at (4.8,1.8) {\large$q_i$};

\node at (6.1,2.5) {\large$H_{i+1}$};

\node at (-1.5,2.3) {\large$H_{i-1}$};

\end{tikzpicture}
(a)\quad \quad \quad \quad \quad \quad \quad \quad \quad \quad \quad \quad \quad \quad \quad(b)

\caption[Isometries]{(a) The isometry for the intercusp geodesic maps $H_\infty$ to $H_1$. (b) The isometry that maps along the translational geodesic points $p_i$ to $q_i$ or the reverse.}
\label{W}
\end{figure}

For each edge inside a region $R$ we assign \emph{edge labels}. The edges lift to ideal vertices of the polygon $F_R$ in $\HH^3$. The edge label $u_j$ represent the translation parameter along the horosphere centered at that ideal vertex that travels from one intercusp geodesic to another.  From  $u_j$ we can find the distance traveled along a horoball, and the direction of travel.  Since the edge label is a translation, up to conjugation it is represented by a $2 \times 2$ matrix:
$ \begin{bmatrix}
1&\epsilon_j u_j
\\0&1
\end{bmatrix} $,
where $\epsilon_j$ is positive if the direction of the edge in the diagram is the same as the direction of travel along the region, and negative otherwise, see Figure \ref{FandPF}(b), the matrix is an isometry translating one endpoint of the $u_i$ curve to the other end along the horosphere, i.e. it maps $p_i$ to $q_i$ or $q_i$ to $p_i$ along horosphere $H_i$, see Figure \ref{W}(b). 

We use the following conventions.  
\begin{enumerate}
    \item The basis of peripheral subgroups is the canonical meridian and longitude. The meridian is oriented using the right hand screw rule with respect to the orientation of the link.

\item The length of meridians along the horospherical cross section on a cusp are 1 \cite{Adams2002}. Consequently, there is a natural relationship between the two faces incident to the diagram that share an edge in the diagram: let $R$ and $S$ be adjacent regions that share an edge $u$, then the edge labels $u_R$ and $u_S$ satisfy $u_R - u_S = \pm 1 \rm{\ or\ }  0$  depending on whether the edge is going from
overpass to underpass, underpass to overpass, or staying leveled respectively from within region $R$. See Figure \ref{edgeequ}.
\begin{figure}
\centering

\definecolor{linkcolor0}{rgb}{0.85, 0.15, 0.15}
\definecolor{linkcolor1}{rgb}{0.15, 0.15, 0.85}
\definecolor{linkcolor2}{rgb}{0.15, 0.85, 0.15}
\begin{tikzpicture}[line width=2.4, line cap=round, line join=round, scale=.65]
    \draw (6.45, 7.15) .. controls (6.15, 6.94) and (5.86, 6.72) .. (5.56, 6.51);
    \draw (5.06, 6.15) .. controls (3.77, 5.23) and (2.61, 4.08) .. (2.16, 2.57);
    \draw (2.16, 2.57) .. controls (1.99, 2.02) and (1.83, 1.48) .. (1.67, 0.93);
  \begin{scope}[color=linkcolor1]
    \draw (8.08, 0.23) .. controls (8.35, 0.80) and (8.50, 1.46) .. (8.16, 1.98);
    \draw (7.87, 2.44) .. controls (7.02, 3.74) and (6.18, 5.05) .. (5.34, 6.35);
    \draw (5.34, 6.35) .. controls (5.07, 6.76) and (4.81, 7.17) .. (4.55, 7.58);
  \end{scope}
    \draw (9.72, 2.53) .. controls (9.16, 2.40) and (8.61, 2.27) .. (8.06, 2.14);
    \draw (8.06, 2.14) .. controls (6.21, 1.72) and (4.27, 1.80) .. (2.48, 2.45);
    \draw (1.83, 2.69) .. controls (1.30, 2.88) and (0.76, 3.07) .. (0.23, 3.26);
  \node at (6.25,3.75) {$u_R$};
  \node at (7.65,3.9) {$u_S$};
  \node at (4.7,3.4) {$R$};
  \node at (8.5,5) {$S$};
\end{tikzpicture}
\hspace{1cm}
\definecolor{linkcolor0}{rgb}{0.85, 0.15, 0.15}
\definecolor{linkcolor1}{rgb}{0.15, 0.15, 0.85}
\definecolor{linkcolor2}{rgb}{0.15, 0.85, 0.15}
\definecolor{linkcolor3}{rgb}{0.15, 0.85, 0.85}
\definecolor{linkcolor4}{rgb}{0.85, 0.15, 0.85}
\definecolor{linkcolor5}{rgb}{0.85, 0.85, 0.15}
\begin{tikzpicture}[thick,scale=0.6, every node/.style={transform shape}][line width=3.7, line cap=round, line join=round]
  \begin{scope}[color=linkcolor1]
    \draw (7.01, 0.20) .. controls (6.65, 0.65) and (6.28, 1.11) .. (5.92, 1.56);
   \draw (5.01, 2.70) .. controls (4.29, 3.61) and (3.57, 4.51) .. (2.86, 5.41);
   \draw (2.86, 5.41) .. controls (2.68, 5.63) and (2.51, 5.85) .. (2.33, 6.07);
   \draw (2.33, 6.07) .. controls (1.86, 6.66) and (1.39, 7.24) .. (0.93, 7.83);
  \end{scope}
  \begin{scope}[color=linkcolor1]
    \draw (1.88, 8.26) .. controls (2.24, 7.80) and (2.60, 7.34) .. (2.96, 6.88);
    \draw (2.96, 6.88) .. controls (3.13, 6.66) and (3.31, 6.43) .. (3.48, 6.21);
    \draw (3.48, 6.21) .. controls (4.28, 5.18) and (5.08, 4.15) .. (5.88, 3.12);
    \draw (6.80, 1.94) .. controls (7.09, 1.57) and (7.38, 1.20) .. (7.67, 0.83);
  \end{scope}
    \draw (4.36, 8.73) .. controls (3.95, 8.19) and (3.55, 7.66) .. (3.14, 7.12);
    \draw (2.15, 5.83) .. controls (1.50, 4.98) and (0.85, 4.12) .. (0.20, 3.27);
    \draw (5.06, 8.23) .. controls (4.59, 7.63) and (4.13, 7.04) .. (3.66, 6.44);
    \fill [black] (2.2,5.5) circle (.1cm);
    \fill [black] (6.1,2.2) circle (.1cm);
    \draw[red, very thick] (2.2,5.5) parabola (2.4,5.9);
     \draw[red,very thick, dashed] (2.5,5.9) parabola (3.1,6.2);
     \draw[green,  dashed, very thick] (3.1,6) parabola (4.4,4.9);
    \draw[green,very thick] (4.4,4.9) parabola (5.4,3); 
      \draw[green, dashed, very thick] (3.1,6) parabola (3.7,4.5);
      \draw[green, very thick]  (5.4,3) parabola  (3.7,4.5);
      \draw[red, very thick]  (5.6,2.7) parabola (5.4,3);
      \draw[red, dashed, very thick]  (6.1,2.2) parabola (5.6,2.7);
      \draw [dashed] (2.4,7.3) arc[x radius=.45cm, y radius=.45cm, start angle=0, end angle=-180];
\draw  (2.4,7.3) arc[x radius=.45cm, y radius=.45cm, start angle=0, end angle=180];

       \draw [dashed] (4.26,7.42) arc[x radius=.45cm, y radius=.45cm, start angle=0, end angle=-180];
\draw  (4.26,7.42) arc[x radius=.45cm, y radius=.45cm, start angle=0, end angle=180];
      
    \draw [dashed] (5.2,1.8) arc[x radius=.45cm, y radius=.45cm, start angle=0, end angle=-180];
\draw  (5.2,1.8) arc[x radius=.45cm, y radius=.45cm, start angle=0, end angle=180];
    \draw (2.67, 5.17) .. controls (2.02, 4.34) and (1.38, 3.51) .. (0.73, 2.68);
    \draw (9.29, 4.43) .. controls (8.21, 3.92) and (7.14, 3.40) .. (6.06, 2.89);
    \draw (6.06, 2.89) .. controls (5.77, 2.75) and (5.49, 2.61) .. (5.20, 2.47);
    \draw (5.20, 2.47) .. controls (4.28, 2.03) and (3.36, 1.60) .. (2.45, 1.16);
    \draw (9.75, 3.54) .. controls (8.71, 3.08) and (7.66, 2.63) .. (6.62, 2.18);
    \draw (6.62, 2.18) .. controls (6.32, 2.05) and (6.03, 1.92) .. (5.74, 1.79);
    \draw (5.74, 1.79) .. controls (4.72, 1.35) and (3.70, 0.91) .. (2.68, 0.46);
 \node at (3,4) {\huge$\alpha_{u_R}$};
  \node at (5.8,4.5) {\huge$\alpha_{u_S}$};
  
\end{tikzpicture}
(a) \quad \quad \quad \quad \quad \quad \quad \quad \quad \quad \quad \quad \quad \quad \quad \quad (b)
\caption[$u_i$ and $u_j$]{(a) The relation between the two sides of an edge. (b) The corresponding meridian loop along the solid torus boundary giving the relation.}
\label{edgeequ}
\end{figure}
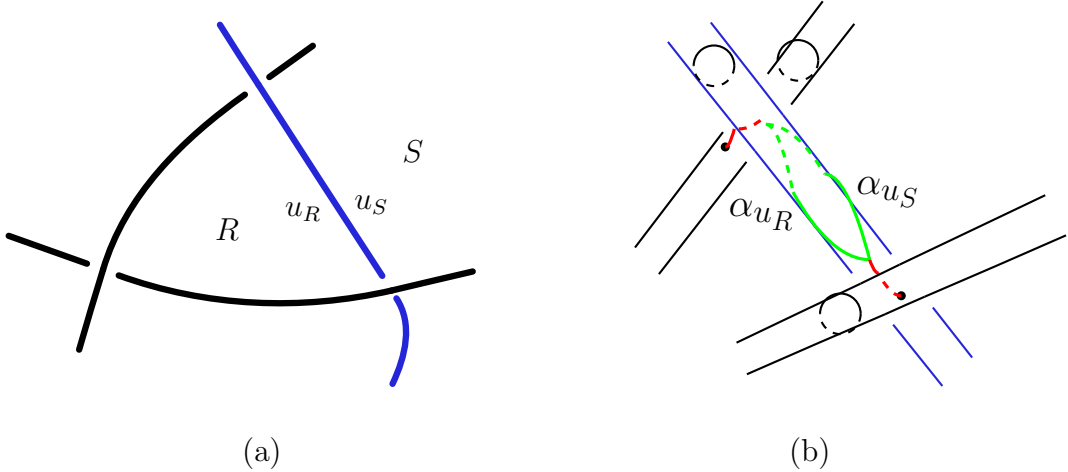

\item The edge labels inside a bigon are zero.
\end{enumerate}

\begin{remark}
For convention 2 above, this relationship holds if the actual region in the diagram corresponds to the ideal polygonal face in $\HH^3$ as described above. In this case, the translation on either side of the region will start and end with the same intercusp geodesics as the other side.  However, below we will see that for fully augmented links, the faces coming from the crossing circles will not be the faces from the diagram directly, but will require the polyhedral decomposition of the complement first.  In this case the edges will not share the same intercusp geodesics, thus the above relationship will not necessarily hold, and will require modification.  
\end{remark}
\begin{define}
 
Let the ideal vertices of the $n$-sided ideal polygon $F_R$ corresponding to the face $R$ be $z_1,...,z_n$.   We will assign a shape parameter to each edge of the polygon as follows: Let $\gamma_i$ be a geodesic edge between ideal vertices $z_i$ and $z_{i+1}$  then its \emph{shape parameter} $\xi_i$ is defined as
$$\xi_i=\frac{(z_{i-1}-z_i)(z_{i+1}-z_{i+2})}{(z_{i-1}-z_{i+1})(z_i-z_{i+2})},$$
which is the cross-ratio of four consecutive vertices of $F_R$.
\end{define}
Thistlethwaite and Tsvietkova show that the above shape parameter can be written in terms of crossing and edge labels in Proposition 4.1 in \cite{tt}. For our purposes all the faces in our class of links will have total geodesic faces as we shall see below.

\begin{prop}\label{shapeparameterprop}
 \cite{tt} Up to complex conjugation, $\displaystyle{\xi_i=\frac{\pm\omega_i}{u_iu_{i+1}}}$ where the sign is positive if both edges are directed away or both are directed toward the crossing, negative if
one edge is directed into the crossing and one is directed out.
\end{prop}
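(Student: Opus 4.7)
The plan is to exploit the $\mathrm{PSL}(2,\CC)$-invariance of the cross-ratio and reduce the proposition to computing two ideal vertices using the two matrix families already set up in the excerpt: the intercusp matrix $W_i = \left(\begin{smallmatrix} 0 & \omega_i \\ 1 & 0 \end{smallmatrix}\right)$ for $\gamma_i$ and the translation matrix $T_j = \left(\begin{smallmatrix} 1 & \epsilon_j u_j \\ 0 & 1 \end{smallmatrix}\right)$ for the edge at $z_j$. First I would normalize by a Möbius transformation so that $z_i=\infty$ and $z_{i+1}=0$. Then the horoball at $z_i$ is a horizontal plane, the geodesic $\gamma_i$ is the vertical axis, and a standard limit in the defining formula collapses $\xi_i$ to $z_{i+2}/z_{i-1}$, so only these two vertices need to be computed.

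The vertex $z_{i-1}$ is immediate: $T_i$ acts on $\CC$ as $w\mapsto w+\epsilon_i u_i$ and sends the foot of $\gamma_i$ (at $0$) to the foot of $\gamma_{i-1}$ along the horosphere at $\infty$, so $z_{i-1}=\epsilon_i u_i$. The vertex $z_{i+2}$ requires transporting $T_{i+1}$ from its $\infty$-normal form to the cusp at $0$ by conjugation; a short calculation gives
\[
W_i\, T_{i+1}\, W_i^{-1} = \begin{pmatrix} 1 & 0 \\ \epsilon_{i+1}u_{i+1}/\omega_i & 1 \end{pmatrix},
\]
which sends $\infty$ (the other endpoint of $\gamma_i$) to $\omega_i/(\epsilon_{i+1}u_{i+1})$. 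Hence $z_{i+2} = \omega_i/(\epsilon_{i+1}u_{i+1})$, and taking the ratio yields $\xi_i = \omega_i/(\epsilon_i\epsilon_{i+1}\,u_i u_{i+1})$, which is $\pm\omega_i/(u_i u_{i+1})$ since each $\epsilon_j=\pm1$.

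The main obstacle is the sign bookkeeping hidden in $\epsilon_i\epsilon_{i+1}$. I would finish by a local case check at the crossing: using the definition of $\epsilon_j$ (positive when the diagram-orientation of the edge agrees with the direction of travel around $R$, negative otherwise) and the four possible pairings of edge-orientations meeting at the crossing, one verifies that $\epsilon_i\epsilon_{i+1}=+1$ exactly when both edges are directed into the crossing or both are directed out, and $-1$ when one points in and the other out. The qualifier \emph{up to complex conjugation} accounts for the two mirror placements of $F_R$ left free by the Möbius normalization: the opposite placement replaces all relevant quantities by their complex conjugates, yielding $\overline{\xi_i}$ in place of $\xi_i$.
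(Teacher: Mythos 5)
Your normalization ($z_i=\infty$, $z_{i+1}=0$, cross-ratio degenerating to $z_{i+2}/z_{i-1}$) and your strategy of computing the two remaining vertices are the same as the paper's: the paper sends $z_1\mapsto\infty$, $z_2\mapsto 0$, obtains $\xi_1=z_3/z_0$, and reads off $z_0$ and $z_3$. Where you differ is in how $z_{i+2}$ is produced: the paper quotes the modulus formula $u_2=|\omega_1|/|z_3|$ from Thistlethwaite--Tsvietkova together with the fact that the horoball at $0$ has diameter $|\omega_1|$, and only recovers $|\omega_1|/(u_1u_2)$ with a verbal remark about signs; you instead conjugate the translation $T_{i+1}$ by $W_i$ and get $z_{i+2}=\omega_i/(\epsilon_{i+1}u_{i+1})$ exactly, phases included. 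Your conjugation computation is correct, and this version is genuinely cleaner --- it keeps track of the complex arguments rather than just the moduli and makes the ``up to complex conjugation'' caveat transparent.

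However, the sign bookkeeping in your last paragraph --- which you correctly identify as the main obstacle --- is asserted rather than verified, and as asserted it is inconsistent with your own conventions. Traveling around $\partial R$ in the order $\dots,u_i,\omega_i,u_{i+1},\dots$, the direction of travel points \emph{into} the crossing along $u_i$ and \emph{out of} it along $u_{i+1}$. Hence if both edges are directed toward the crossing, then $\epsilon_i=+1$ but $\epsilon_{i+1}=-1$, so $\epsilon_i\epsilon_{i+1}=-1$ in the ``both toward / both away'' case and $+1$ in the ``one in, one out'' case --- the opposite of what you claim. With your formula $\xi_i=\omega_i/(\epsilon_i\epsilon_{i+1}u_iu_{i+1})$ this would put the minus sign on the wrong configuration. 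The missing compensating sign is in $z_{i-1}$: the parabolic at $z_i=\infty$ carries the foot of $\gamma_{i-1}$ (namely $z_{i-1}$) to the foot of $\gamma_i$ (namely $0$), so $z_{i-1}+\epsilon_iu_i=0$, i.e.\ $z_{i-1}=-\epsilon_iu_i$ rather than $+\epsilon_iu_i$. That extra factor of $-1$ gives $\xi_i=-\omega_i/(\epsilon_i\epsilon_{i+1}u_iu_{i+1})$, which together with the corrected case analysis yields exactly the sign rule in the statement. The argument is fixable, but as written the two sign errors happen to be presented as if they cancel without either being checked.
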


\begin{proof}
  Let $z_0, z_1, z_2, z_3$ be four consecutive ideal vertices in $\HH^3$ that correspond to the edges with edge labels $u_0, u_1, u_2, u_3$ respectively in the link diagram $L$, 
  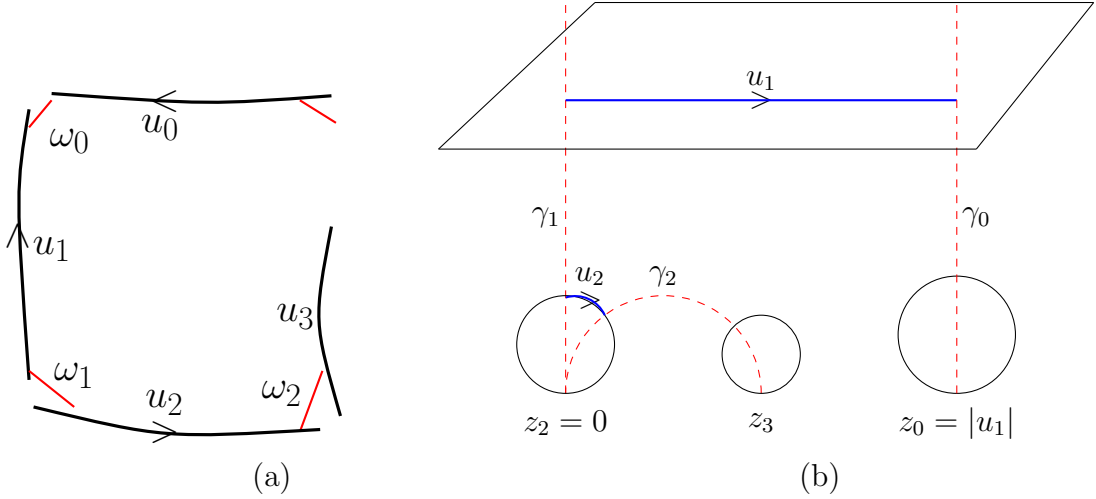
\begin{figure}
\centering

\begin{tikzpicture}[thick,scale=0.6, every node/.style={transform shape}]

\draw [very thick] (0,1.8) .. controls (-.3,6) .. (0,7.8);
\draw [thick, red] (0,7.4) -- (.5,8);
\draw[thick, red] (6,8) -- (6.8,7.5);
\draw[thick, red] (0,2) -- (1,1.2);
\draw[thick, red] (6,.65)--(6.5,2);

\draw[ very thick] (.5,8.15) .. controls (4,7.9).. (6.7,8.1);

\draw[very thick] (6.9,1) .. controls (6.3,3.1).. (6.7,5.2);

\draw [very thick] (0.1,1.2) .. controls (3,.5) .. (6.45,.7);

\node at (1,1.9) {\huge$\omega_1$};
\node at (.5,4.7) {\huge$u_1$};
\node at (.9,7.1) {\huge$\omega_0$};
\node at (2.9,7.4) {\huge$u_0$};
\node at (3,7.98) {\huge\textless};
\node at (3,0.65) {\huge\textgreater};
\node at (-.25,5) {\huge$\wedge$}; 
\node at (5.9,3.2) {\huge$u_3$};
\node at (5.6,1.5) {\huge$\omega_2$};
\node at (3,1.3) {\huge$u_2$};
\end{tikzpicture}
\hspace{1cm}
\begin{tikzpicture}[scale=1.3]
\draw [red, dashed] (0,0) -- (0,4);
\draw [red, dashed] (2,0) arc (0:180:1);
\draw [] (0,.5) circle (.5);
\node at (.,-.3) {$z_2 = 0$};
\node at (2,-.3) {$z_3$};
\node [thick] at (2,3) {\textgreater};
\node [thick] at (.25,.95) {\textgreater};
\draw [blue, thick] (.4,.8) arc (24:107:.33);
\node at (.25,1.2) {$u_2$};
\draw (2,.4) circle (.4); 
\draw [blue, thick] (0,3) -- (4,3);
\node at (2,3.2) {$u_1$};
\draw (4,.6) circle (.6);
\node at (4,-.3) {$z_0 = |u_1|$};
\draw [red, dashed] (4,0) -- (4,4);
\node at (4.2, 1.8) {$\gamma_0$};
\node at (-.2, 1.8) {$\gamma_1$};
\node at (1, 1.2) {$\gamma_2$};
 \draw (-1.3,2.5) -- (4.2,2.5);
\draw (-1.3,2.5) -- (.3, 4);
\draw (.3,4 ) -- (5.4,4 );
\draw (4.2,2.5) -- (5.4,4);
\centering
\end{tikzpicture}
\\(a) \quad \quad \quad \quad \quad \quad \quad \quad \quad \quad \quad \quad \quad \quad \quad \quad (b)
\caption[$\omega$ and $u$ in $\mathbb{H}^3$]{(a) Edge and crossing labels in the link diagram. (b) The corresponding translational and intercusp geodesics in $\HH^3$.}
\label{LandH3}
\end{figure}
See Figure \ref{LandH3}.
  We can always perform an isometry and let $z_0$ be placed at $|u_1|$, $z_1$ at $\infty$ where the horoball $H_\infty$ is at Euclidean height $1$, $z_2$ at $(0,0,0)$. Let $\gamma_0$ connect $z_0$ to $z_1$, correspond to $\omega_0$ in $L$, $\gamma_1$ be the geodesic connecting $z_1$ to $z_2$ correspond to $\omega_1$ in $L$ and $\gamma_2$ connect $z_2$ to $z_3$ correspond to $\omega_2$ in $L$ .  The horoball $H_2$ has diameter $|\omega_1|$ since the hyperbolic distance between $H_\infty$ and $H_2$ is $log\frac{1}{|\omega_1|}$ and $|\omega| = e^{-d}$.  In \cite{tt} T-T showed that $u_2 = \frac{|\omega_1|}{|z_3|}$. Thus the shape parameter $\xi_1$ is $$\xi_1=\frac{(z_0-z_1)(z_2-z_3)}{(z_0-z_2)(z_1-z_3)} = \frac{z_3}{z_0} = \frac{\frac{|\omega_1|}{u_2}}{u_1} = \frac{|\omega_1|}{u_1u_2}.$$ If either $u_1$ or $u_2$ exclusively were going in the opposite direction then it will cause the shape parameter to be of different sign.  
\end{proof}
Let $R_i$ be a face in $L$, which corresponds to $F_{R_i}$ in the ideal polygon. Fix $F_{R_i}$, we can perform an isometry sending ideal vertices $z_{i-1}$, $z_i$, and $z_{i+1}$ to $1$, $\infty$, and $0$ respectively, then $z_{i+2}$ will be placed at $\xi_i$.  Since the region closes up, the collection of shape parameters for each region determines the isometry class of the associated ideal polygon. The shape parameters $\xi_i$ satisfy algebraic equations amongst themselves. For example, for a  3-sided region 
the shape parameters are equal to each other and are equal to $1$, while in a 4-sided region the sum of consecutive shape
parameters is equal to 1. For regions with $ n \geqslant 5$ we use Proposition 4.2 in \cite{tt} to determine the algebraic equations in terms of crossing and edge labels. 

For each region in the diagram there is an alternating sequence of edges and crossings until the region closes up.  The product of the corresponding matrices is a scalar multiple of the identity.  Consequently, we have a system of equations whose solution allows us to construct a discrete faithful representation of the complement. We state Proposition 4.2 in \cite{tt}.
\begin{prop}\label{matrixprop} 
  Let $R$ be a region of an oriented link diagram with $ n \geqslant 3$ sides,
  and, starting from some crossing of $R$, 
  let $$ u_1 , \omega_1 , u_2 , \omega_2 , . . . , u_n , \omega_n $$ be the
  alternating sequence of edge and crossing labels for $R$ encountered
  as one travels around the boundary of the region. Also, for
  $1 \leqslant i \leqslant n$ let $\epsilon_i = 1$ (resp.
  $\epsilon_i = -1$) if the direction of the edge corresponding to
  $u_i$ is with (resp. against) the direction of travel. Then the
  equation for $R$ is written as
  $$\overset{n}{\underset{i=1}{\prod}}
\Big( \begin{bmatrix} 0&\omega_i
    \\1&0
\end{bmatrix} 	
\begin{bmatrix}
1&\epsilon_i u_i
\\0&1
\end{bmatrix} 
\Big)
\sim
\begin{bmatrix}
1&0
\\0&1
\end{bmatrix}.
$$
\end{prop}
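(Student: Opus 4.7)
The plan is to interpret each matrix factor as an explicit orientation--preserving isometry of $\HH^3$, so that the full product encodes the holonomy of a loop traversing the boundary of $R$. Because this loop bounds the ideal disk $F_R$, it is null--homotopic in the link complement, and its holonomy must therefore be trivial in $\mathrm{PSL}(2,\C)$ --- which is exactly the assertion that the product is a scalar multiple of the identity in $\mathrm{GL}(2,\C)$.

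To set this up, I would normalize so that the first ideal vertex $z_1$ of $F_R$ is at $\infty$ with horoball $H_\infty$ the plane $z=1$, and the next vertex $z_2$ at the origin. Under this normalization, the matrix $\begin{bmatrix} 0 & \omega_1 \\ 1 & 0 \end{bmatrix}$ is exactly the M\"obius transformation that carries $H_\infty$ to the horoball $H_2$ of Euclidean diameter $|\omega_1|$ sitting above $z_2$, as recorded in the paragraph defining the intercusp parameter. Similarly, $\begin{bmatrix} 1 & \epsilon_2 u_2 \\ 0 & 1 \end{bmatrix}$, read in the local frame that places $z_2$ at $\infty$, slides along $H_2$ from the foot of $\gamma_1$ to the foot of the next intercusp geodesic $\gamma_2$, the sign $\epsilon_2$ recording whether the diagram edge and the traversal of $\partial R$ agree. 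Iterating the pattern intercusp--translation--intercusp--translation, for $n$ steps, then traces out the full circuit of ideal vertices $z_1, z_2, \dots, z_n, z_1$ of $F_R$ and returns to the initial horoball and frame.

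Since an orientation--preserving isometry of $\HH^3$ that fixes a horosphere together with a tangent direction is the identity in $\mathrm{PSL}(2,\C)$, the cumulative product represents the identity projectively and hence equals a scalar multiple of $I$ in $\mathrm{GL}(2,\C)$, which is precisely the stated relation. The technical heart of the argument, and the main obstacle, is the bookkeeping: tracking the signs $\epsilon_i$ against the orientation conventions on the link, verifying that the conjugations needed to bring each successive factor to the canonical form with current vertex at $\infty$ telescope correctly along the product, and checking that the meridian--length--one normalization from convention (2), together with the shape--parameter formula of Proposition \ref{shapeparameterprop}, makes the lengths and dihedral angles encoded by $u_i$ and $\omega_i$ compatible at each ideal vertex so that no residual conjugating factor is left behind when the circuit closes.
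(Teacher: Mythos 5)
Your argument is essentially the intended one: the paper does not prove this proposition itself but quotes it as Proposition 4.2 of Thistlethwaite--Tsvietkova, justified by the one-line remark that ``the region closes up,'' and the underlying proof is exactly the frame-tracking computation you describe --- each factor is the transition between normalized frames (horoball, basepoint, meridional direction) at consecutive ideal vertices of $F_R$, and the product is a scalar multiple of the identity because the circuit returns to the initial frame, so the composite isometry is trivial in $\mathrm{PSL}(2,\C)$. Your sketch correctly identifies both the mechanism and where the residual bookkeeping (signs $\epsilon_i$ and telescoping conjugations) lives, so it matches the paper's approach.
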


This can be done for each region of $L$, thus we have algebraic equations for each face $R$ of $L$ in terms of crossing and edge labels, the solutions to the algebraic equations allow us to construct the discrete faithful representation of the link group into $PSL_2(\mathbb{C})$.

It is proved in \cite{tt} that the solutions to the above system of equations is discrete. Thus we can eliminate the variables
and reduce this system of equations to a 1-variable polynomial, referred to as the \emph{T-T polynomial}. Neumann-Tsvietkova
\cite{nt} related the solutions to the invariant trace field of the link complement.

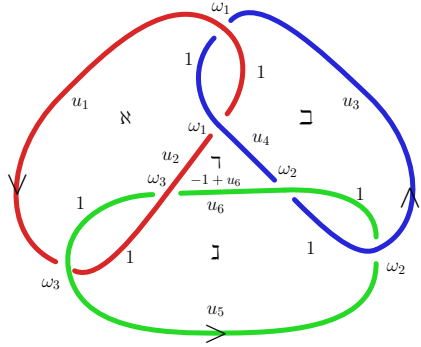
\begin{figure}[h]
\centering
    \definecolor{linkcolor0}{rgb}{0.85, 0.15, 0.15}
\definecolor{linkcolor1}{rgb}{0.15, 0.15, 0.85}
\definecolor{linkcolor2}{rgb}{0.15, 0.85, 0.15}
\begin{tikzpicture}[every node/.style={scale=.5},scale=.6, line width=1.9, line cap=round, line join=round]
  \begin{scope}[color=linkcolor0]
    \draw (5.17, 6.97) .. controls (4.03, 7.88) and (2.67, 6.60) .. 
          (1.52, 5.42) .. controls (0.34, 4.21) and (0.20, 2.38) .. (1.45, 1.79);
    \draw (1.87, 1.59) .. controls (2.43, 1.33) and (3.27, 2.45) .. (3.91, 3.30);
    \draw (3.91, 3.30) .. controls (4.23, 3.72) and (4.56, 4.15) .. (4.88, 4.58);
    \draw (5.25, 5.07) .. controls (5.71, 5.68) and (5.74, 6.53) .. (5.17, 6.97);
  \end{scope}
  \begin{scope}[color=linkcolor1]
    \draw (8.55, 2.06) .. controls (9.73, 2.57) and (9.58, 4.31) .. 
          (8.41, 5.44) .. controls (7.31, 6.49) and (5.95, 7.79) .. (5.33, 7.14);
    \draw (4.96, 6.75) .. controls (4.45, 6.21) and (4.52, 5.36) .. (5.06, 4.82);
    \draw (5.06, 4.82) .. controls (5.48, 4.42) and (5.89, 4.01) .. (6.30, 3.60);
    \draw (6.74, 3.17) .. controls (7.31, 2.60) and (8.07, 1.85) .. (8.55, 2.06);
  \end{scope}
  \begin{scope}[color=linkcolor2]
    \draw (6.52, 3.38) .. controls (7.50, 3.42) and (8.55, 3.18) .. (8.55, 2.34);
    \draw (8.55, 1.75) .. controls (8.55, 0.40) and (6.81, 0.22) .. 
          (5.25, 0.21) .. controls (3.69, 0.19) and (1.93, 0.28) .. (1.73, 1.66);
    \draw (1.73, 1.66) .. controls (1.60, 2.58) and (2.55, 3.25) .. (3.60, 3.28);
    \draw (4.22, 3.31) .. controls (4.98, 3.33) and (5.75, 3.36) .. (6.52, 3.38);
  \end{scope}
    \node[thick, black] at (5,.2) {\huge\textgreater};
      \node[thick, black] at (0.6,3.5) {\huge$\vee$};
        \node[thick, black] at (9.3,3.2) {\huge$\wedge$};
        \node at (1.35,1.3) {\large$\omega_3$};
         \node at (3.7,3.6) {\large$\omega_3$};
         \node at (9,1.65) {\large$\omega_2$}; \node at (6.6,3.8) {\large$\omega_2$};
         \node at (4.6,4.75) {\large$\omega_1$}; \node at (5.12,7.4) {\large$\omega_1$};
         \node at (2,5.3) {\large$u_1$};
        \node at (4.4,6.3) {\large$1$};\node at (2,3.1) {\large$1$};
        \node at (4,4.1) {\large$u_2$};\node at (3.1,1.9) {\large$1$};
        \node at (5,.7) {\large$u_5$};\node at (5,3) {\large$u_6$};
        \node at (7.1,2.1) {\large$1$};\node at (8.2,3.3) {\large$1$};\node at (8,5.3) {\large$u_3$};
        \node at (6,6) {\large$1$};\node at (6,4.5) {\large$u_4$};\node at (3,5) {\large$\aleph$};
       \node at (7,5) {\large$\beth$};\node at (5,2) {\large$\gimel$};\node at (5,4) {$\daleth$}; 
       \node at (5,3.6) {\small$-1+u_6$};

\end{tikzpicture}
    \caption{Hamantash Link}
    \label{magic}
\end{figure}
\subsection{Example: Hamantash Link }
Region $\aleph$:
This is a four-sided region with shape parameters:
$$\xi_1=\frac{\omega_1}{u_{1}}, \quad \xi_2=\frac{\omega_1}{u_{2}},
\quad \xi_3=\frac{\omega_3}{u_{2}}, \quad \xi_4=\frac{\omega_3}{u_{1}}.$$  Thus the equations are:
$$\frac{\omega_1}{u_{1}} + \frac{\omega_1}{u_{2}} = 1 , \quad \frac{\omega_1}{u_{2}} + \frac{\omega_3}{u_{2}} = 1,  \quad \frac{\omega_3}{u_{2}} + \frac{\omega_3}{u_{1}} = 1,  \quad \frac{\omega_3}{u_{1}} + \frac{\omega_1}{u_{1}} = 1$$ solving gives us the relations
$$u_{1} = u_{2} = 2\omega_1 = 2\omega_3, \quad \textrm{and} \quad \omega_1 = \omega_3.$$
Region $\beth$:
This is a four-sided region with shape parameters:
$$\xi_1=\frac{\omega_1}{u_{4}}, \quad \xi_2=\frac{\omega_1}{u_{3}},
\quad \xi_3=\frac{\omega_2}{u_{3}}, \quad \xi_4=\frac{\omega_2}{u_{4}}.$$  Thus the equations are:
$$\frac{\omega_1}{u_{4}} + \frac{\omega_1}{u_{3}} = 1 , \quad \frac{\omega_1}{u_{3}} + \frac{\omega_2}{u_{3}} = 1,  \quad \frac{\omega_2}{u_{3}} + \frac{\omega_2}{u_{4}} = 1,  \quad \frac{\omega_2}{u_{4}} + \frac{\omega_1}{u_{4}} = 1$$ solving gives us the relations
$$u_{3} = u_{4} = 2\omega_1 = 2\omega_2, \quad \textrm{and} \quad \omega_1 = \omega_2.$$
Region $\gimel$:
This is a four-sided region with shape parameters:
$$\xi_1=\frac{\omega_3}{u_{6}}, \quad \xi_2=\frac{\omega_2}{u_{6}},
\quad \xi_3=\frac{\omega_2}{u_{5}}, \quad \xi_4=\frac{\omega_3}{u_{5}}.$$  Thus the equations are:
$$\frac{\omega_3}{u_{6}} + \frac{\omega_2}{u_{6}} = 1 , \quad \frac{\omega_2}{u_{6}} + \frac{\omega_2}{u_{5}} = 1,  \quad \frac{\omega_2}{u_{5}} + \frac{\omega_3}{u_{5}} = 1,  \quad \frac{\omega_3}{u_{5}} + \frac{\omega_3}{u_{6}} = 1$$ solving gives us the relations
$$u_{5} = u_{6} = 2\omega_2 = 2\omega_3, \quad \textrm{and} \quad \omega_2 = \omega_3.$$
Region $\daleth$:
This is a three-sided region with edge labels $-1+u_2, \quad -1+u_4, \quad -1+u_6.$
$$\xi_1=\frac{-\omega_1}{(-1+u_2)(-1+u_4)} = 1, \quad \xi_2=\frac{-\omega_2}{(-1+u_4)(-1+u_6)} = 1, \quad
\xi_3=\frac{-\omega_3}{(-1+u_6)(-1+u_2)} = 1$$ solving these equations we get the T-T polynomial as $$4\omega_1^2 -3\omega_1 +1 = 0$$ thus $$\omega_i = \frac{3}{8}\pm \frac{\sqrt{7}}{8}i, \quad u_i =  \frac{3}{4}\pm \frac{\sqrt{7}}{4}i.$$

Neumann-Tsvietkova
proved in
\cite{nt} that one of the roots of the polynomial should give us the invariant trace field.

We can check the linear dependence using mathematica or pari-gp. 
It is suggested in \cite{tt} that the geometric solution will be the one that produces the highest volume, but finding the volume from the solutions can be difficult. In \S $5$ we will show how to find the geometric solution for the class of fully augmented links.

Using Snap the invariant trace field for the Hamantash link is $$x^2-x+2, \quad x = \frac{1}{2}+\frac{\sqrt{7}}{2}i \quad  \textrm{and} \quad \omega_1 = \frac{1+x}{4}.$$

\section{T-T Method and FAL}
\label{sec:TT-FAL}

In this section we show that the T-T method can be effectively extended to the class of fully augmented links. 
Our extension of the T-T method will be on a trivalent graph which is the intermediate step between the FAL diagram and the polyhedron $P_L.$ We denote the \emph{planar trivalent graph} $T_L$, for example see Figure \ref{csf}(b).  $T_L$ is in fact 
 the ideal polyhedron $P_L$, truncated at the ideal vertices, along with the orientations on the links of the vertices. Since the vertices are all 4-valent, the link of the vertices are all rectangles which tessellate the cusp torus. Thinking of the long thin rectangular pieces as thick edges in Figure \ref{TL} ($T_L$ for Borromean FAL), one gets the trivalent graph $T_L$. The components of the link diagram and the crossing geodesics are both visible on $T_L$. The crossing geodesics are on the boundary of the hexagonal regions corresponding to the bowties.  We will assign the edge labels and the crossing labels on this type of diagram for the T-T method to work. 

\begin{figure}
    \centering
    \includegraphics[height = 2 in, width = 3 in]{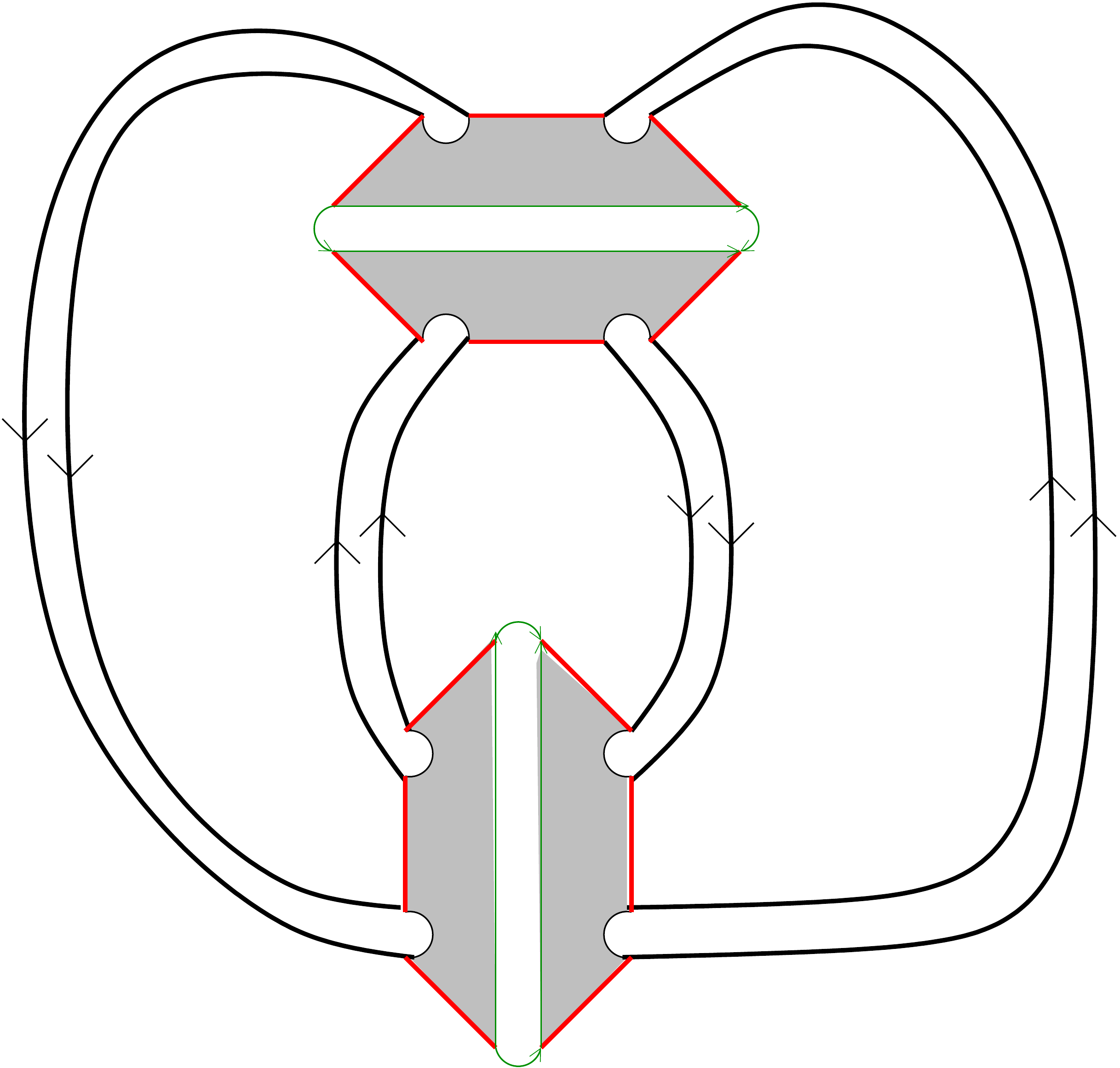}
    \caption{$T_L$ for Borromean FAL}
    \label{TL}
\end{figure}
In order to use the T-T method we need to ensure that it can be applied to the class of FALs. The tautness condition on the diagram is to ensure that the faces in the link diagram correspond to ideal polygons in $\HH^3$ with distinct vertices.  
\begin{prop}\label{polyhedprop} Let $L$ be a FAL diagram, then the planar trivalent graph $T_L$ is taut.

\end{prop}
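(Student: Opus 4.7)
The plan is to derive tautness of $T_L$ directly from the fact that, by Proposition \ref{PLprop}, every face of the polyhedron $P_L$ is totally geodesic, so that the two checkerboard surfaces associated to $T_L$ sit in $S^3 \setminus L$ as totally geodesic surfaces. Once this identification is in place, incompressibility and the absence of accidental parabolics are essentially automatic from hyperbolic rigidity.

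First I would identify the checkerboard surfaces. The faces of $T_L$ inherit the checkerboard coloring from $P_L$: shaded faces are the bowties coming from the crossing circles, and white faces come from the projection-plane components of $L$. When the two copies of $P_L$ are glued to form $S^3 \setminus L$, each shaded face on one copy is glued to a shaded face on the other (possibly with a twist by the half-twist convention, cf.\ Figure \ref{bowtieglue}), and similarly for white faces; so the union of shaded faces assembles into a single surface $\Sigma_{sh}$ and the union of white faces assembles into a single surface $\Sigma_{wh}$, each totally geodesic in $S^3 \setminus L$ because each face of $P_L$ is geodesic.

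Next I would verify the three parts of Definition \ref{tautdef} for both $\Sigma_{sh}$ and $\Sigma_{wh}$. A totally geodesic surface $\Sigma$ in a complete hyperbolic 3-manifold is automatically incompressible and boundary-incompressible: any compressing (resp.\ boundary-compressing) disk would lift to a disk in $\HH^3$ bounded by a loop (resp.\ arc with endpoints on a horosphere) on a hyperbolic plane, and such a loop or essential arc is never null-homotopic; equivalently, $\pi_1(\Sigma) \hookrightarrow \pi_1(S^3 \setminus L) \subset \mathrm{PSL}_2(\C)$ embeds as a Fuchsian subgroup. For the absence of accidental parabolics, suppose $\gamma \in \pi_1(\Sigma)$ is parabolic in $\pi_1(S^3 \setminus L)$. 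Then $\gamma$ preserves the hyperbolic plane $\widetilde{\Sigma}$ covering $\Sigma$ and has a unique fixed point at infinity, which therefore lies on the boundary circle $\partial \widetilde{\Sigma}$; hence $\gamma$ already acts as a parabolic of the Fuchsian group $\pi_1(\Sigma)$ and represents an actual cusp of $\Sigma$, not an accidental one. The distinct-vertices condition required for the T-T method likewise follows: the ideal vertices of $P_L$ are distinct points of $\partial \HH^3$ because they correspond to distinct cusp lifts, so the faces of $T_L$ lift to honest ideal polygons with distinct vertices.

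The main obstacle is really just a bookkeeping one at the start: since $T_L$ is not a classical alternating link diagram, one has to set up carefully what a ``checkerboard surface of $T_L$'' is and match it with the face structure of the polyhedral decomposition from Proposition \ref{PLprop}. Once this identification is made, everything reduces to the standard fact that totally geodesic surfaces in hyperbolic 3-manifolds satisfy all three conditions of tautness, and the proof becomes short.
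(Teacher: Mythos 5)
Your proof is correct and follows essentially the same route as the paper: identify the shaded and white checkerboard surfaces of $T_L$ with the totally geodesic twice-punctured disks and projection-plane surfaces (the paper cites Lemma 2.1 of Purcell for this, you derive it from the geodesic faces of $P_L$), and then deduce incompressibility, boundary-incompressibility, and the absence of accidental parabolics from total geodesicity. The only difference is cosmetic: where the paper excludes accidental parabolics by appealing to Thurston's trichotomy for the Fuchsian surface subgroups, you give a direct argument that a parabolic stabilizing a hyperbolic plane must fix a point of its boundary circle and hence is already peripheral in the surface; both are valid.
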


\begin{proof} Let $L$ be a hyperbolic FAL. By Lemma 2.1 in \cite{purcell-fal}, the following surfaces are embedded totally geodesic surfaces in the link complement:  \begin{enumerate}
    \item twice punctured discs coming from the regions bounded by the crossing circles and punctured by two strands in the projection plane, and
    \item the surfaces in the projection plane.
\end{enumerate}    
Embedded totally geodesic surfaces are Fuchsian and Thurston's trichotomy for surfaces in 3-manifolds implies a surface can either be quasifuchsian, accidental, or semi-fibered \cite{futer2014quasifuchsian}. This implies they do not contain any accidental parabolics. We have a checkerboard coloring by shading the discs coming from the regions bounded by the crossing circles, and leaving the surfaces in the projection plane white. Thus by definition, the checkerboard surfaces of $T_L$ are incompressible and boundary incompressible. Hence $T_L$ is taut.
\end{proof}

Since the regions of $T_L$ correspond to geodesic faces in $\mathbb{H}^3$, the definitions for crossing and edge labels in the T-T method, the corresponding matrices, the shape parameters and equations of Propositions \ref{shapeparameterprop} and \ref{matrixprop} hold for $T_L$.  The fundamental difference is in the relationship between parameters for edges incident to adjacent faces as will be discussed below. 

\subsection{Thrice Punctured Sphere}
The twice punctured disc bounded by the crossing circle is geodesic and has the hyperbolic structure of the thrice
punctured sphere formed by gluing two ideal triangles. So we will refer to the twice punctured discs as thrice punctured spheres from now on.
Let $L$ be a FAL, then $L$ contains at least two crossing circles. This implies $S^3 \backslash L$ contains at least two thrice punctured spheres.  We will first study how the T-T method defines parameters on the thrice punctured sphere and use this as a basic building block for FALs.

The thrice punctured sphere has three components, two  strands that lie in the projection plane, and another circle component know as a crossing circle that encircles the two other strands. The thrice punctured sphere is known to be totally geodesic constructed by gluing two ideal triangles together along their edges \cite{Adams1985}.   There are two cases based on the orientation: one where the strands in the projection plane are parallel and the other when they are anti-parallel. On $T_L$ the thrice punctured sphere corresponds to a hexagon, and on $P_L$ it corresponds to a bowtie. We will study the part of $T_L$ corresponding to the hexagon.

\begin{theorem}\label{tps}
 
\begin{enumerate}
    \item The crossing labels on opposite sides of the augmented circles with parallel strands in the link diagram will be equal and the intercusp geodesic along the projection plane equals $-1/4$.
    \item The crossing labels on opposite sides of the augmented circles with anti-parallel strands in the link diagram will differ by sign and the intercusp geodesic along the projection plane equals $1/4$.
\end{enumerate}

  \end{theorem}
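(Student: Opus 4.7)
The plan is to exploit the rigidity of the totally geodesic thrice punctured sphere embedded in $S^3 \setminus L$. By \cite{purcell-fal} this surface is totally geodesic, and since the thrice punctured sphere admits a unique complete hyperbolic structure, its embedding in $\HH^3$ as a geodesic plane is determined up to isometry once we normalize the meridian on each of its cusps to have length $1$.

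First I would lift the thrice punctured sphere to a vertical geodesic half-plane in the upper half-space model of $\HH^3$, placing the crossing circle cusp at $\infty$ and its horoball at Euclidean height $1$, so that the meridian parabolic at this cusp is the translation $z\mapsto z+1$. The fundamental group of the thrice punctured sphere is free on two generators, and the three peripheral parabolics $T,\,P_{s_1},\,P_{s_2}$ satisfy $T\cdot P_{s_1}\cdot P_{s_2}=\mathrm{id}$. Solving this relation under the requirement that $P_{s_1}$ and $P_{s_2}$ are parabolic with meridians of length $1$ on their respective horoballs pins the two strand cusps to $0$ and $1/2$ on the real line, with horoballs of Euclidean diameter $1/4$. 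Concretely, the relation forces $P_{s_1}(z)=z/(1-4z)$ and $P_{s_2}(z)=(z-1)/(4z-3)$, and conjugating each to $\infty$ shows that meridian length $1$ occurs exactly at radius $1/8$.

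Next I would compute the three intercusp distances explicitly in this configuration. The geodesic from $cc$ to either strand cusp is the vertical segment from height $1$ down to height $1/4$, of hyperbolic length $\ln 4$. The geodesic from $s_1$ to $s_2$ is the semicircle of radius $1/4$ centered at $1/4$, and the integral $\int d\theta/\sin\theta$ between the two boundary points on the strand horoballs again evaluates to $\ln 4$. Hence $|\omega|=e^{-\ln 4}=1/4$ for each of the three crossing labels of the hexagon in $T_L$; in particular $|\omega_{s_1,s_2}|=1/4$.

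Finally, the complex argument of each $\omega$ is the dihedral angle between the two meridian half-planes at its endpoints, and this is where the orientations of the two strands enter. Tracking the meridian directions on the two strand horoballs via the right-hand screw rule applied to the oriented strands in the projection plane, I would show that for \emph{parallel} strands the dihedral angle along the $s_1$--$s_2$ geodesic is $\pi$, giving $\omega_{s_1,s_2}=-1/4$, while the involution $z\mapsto\tfrac12-z$ of the thrice punctured sphere swapping $s_1\leftrightarrow s_2$ is compatible with the strand orientations and forces $\omega_{cc,s_1}=\omega_{cc,s_2}$; for \emph{anti-parallel} strands the dihedral angle along the $s_1$--$s_2$ geodesic becomes $0$, yielding $\omega_{s_1,s_2}=+1/4$, while the same swap involution now reverses one strand's orientation and produces the sign flip $\omega_{cc,s_1}=-\omega_{cc,s_2}$. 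The main obstacle is precisely this sign bookkeeping: correctly translating the right-hand screw rule on the oriented link strands into the dihedral angles between the meridian half-planes in $\HH^3$, and verifying that these two orientation effects (parallel vs.\ anti-parallel) really do exchange the roles of $0$ and $\pi$ in the two claims of the theorem.
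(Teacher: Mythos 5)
Your proposal takes a genuinely different route from the paper: the paper derives the relations combinatorially, by writing the T-T shape-parameter equations for the two ideal triangles $\aleph_A,\aleph_B$ into which the hexagon of $T_L$ decomposes (using the convention that the half-meridian arcs at the strand cusps carry the label $\tfrac12$) and then checking the answer against the matrix identity of Proposition \ref{matrixprop}; you instead try to read everything off the rigid hyperbolic structure of the totally geodesic thrice punctured sphere. That idea is sound for the strand-to-strand parameter, but as executed it contains a genuine error.

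The error is in your normalization at the crossing-circle cusp. The peripheral element $T$ of the thrice punctured sphere at that cusp --- the element appearing in the relation $T\cdot P_{s_1}\cdot P_{s_2}=\mathrm{id}$ --- is the boundary slope of the twice punctured disc on $\partial N(C)$, i.e.\ a \emph{longitude} of the crossing circle, not its meridian. The meridian of $C$ is transverse to the disc and does not preserve the geodesic plane containing the thrice punctured sphere, so you may not set $T(z)=z+1$. The horospherical length of $T$ is in fact $|4\omega|$ (this is exactly the content of Theorem \ref{mainthm}), which is a free parameter determined by the rest of the link. Your normalization forces $|\omega_{cc,s_1}|=|\omega_{cc,s_2}|=1/4$, which is false in general: for the Borromean FAL the paper computes $\omega=\pm i/2$, and for $FAL_{4_1}$ it computes $\omega=\pm\sqrt{2}\,i/4$. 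Only the strand-to-strand parameter $\omega_3$ is pinned down by the thrice punctured sphere, and your computation of $|\omega_3|=1/4$ survives only by accident: rescaling the plane so that $T$ has the correct translation length moves the strand cusps and rescales their meridian-normalized horoballs proportionally, so the cross-ratio-type quantity $\ln\bigl(|z_{s_1}-z_{s_2}|^2/(d_1d_2)\bigr)=\ln 4$ is unchanged, while the distances to the cusp at $\infty$ are not. Beyond this, the part of the theorem that actually distinguishes the two cases --- that the dihedral angles are $\pi$ versus $0$, and that $\omega_1=\omega_2$ versus $\omega_1=-\omega_2$ --- is only announced (``I would show\ldots''), and your symmetry argument for the latter would need the swap involution to respect the \emph{meridian} framings at the crossing-circle cusp, which are extrinsic to the thrice punctured sphere. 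To repair the argument you should either drop the claim about $|\omega_1|$ and keep $\omega_1$ as an undetermined parameter tied to the edge label by $u_1=2\omega_1$ (as the paper does), or carry out the orientation bookkeeping within the paper's framework of shape parameters on $\aleph_A$.
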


\begin{proof}

For each crossing circle there are four crossing labels $\omega_i$.  The two labels that share a bigon are equivalent since the region collapses and has the same geodesic arc going from horoball to horoball \cite{tt}. For the relationship between the two crossing labels that don't share a bigon we have two cases: 

Case 1: Parallel strands in the link diagram
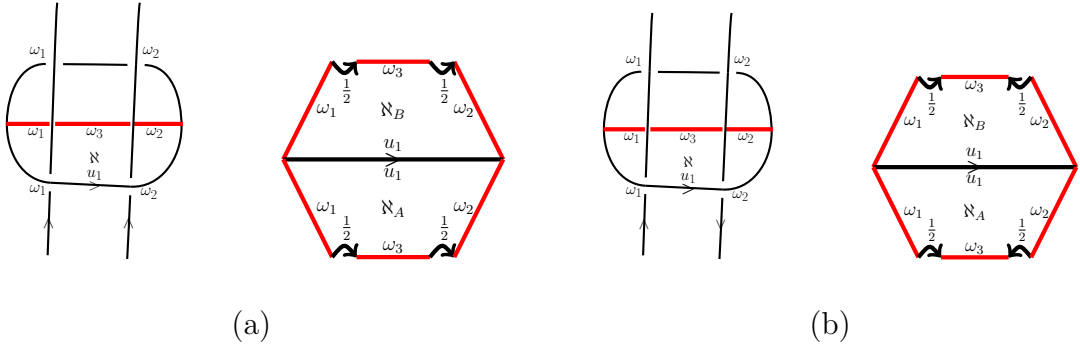
\begin{figure}
    \centering
    \begin{tikzpicture}[thick,scale=0.25, every node/.style={transform shape}][line width=1.5, line cap=round, line join=round]
    \draw (6.91, 4.16) .. controls (8.49, 4.08) and (9.60, 5.62) .. 
          (9.62, 7.32) .. controls (9.63, 8.94) and (9.07, 10.62) .. (7.66, 10.63);
    \draw (6.74, 10.64) .. controls (5.60, 10.66) and (4.46, 10.67) .. (3.32, 10.68);
    \draw (2.40, 10.69) .. controls (1.03, 10.71) and (0.37, 9.15) .. 
          (0.33, 7.60) .. controls (0.29, 5.99) and (1.15, 4.45) .. (2.64, 4.37);
    \draw (2.64, 4.37) .. controls (4.06, 4.30) and (5.48, 4.23) .. (6.91, 4.16);
 \draw[ultra thick, red] (0.3,7.5) -- (2.6,7.5);
 \draw[ultra thick, red] (2.9,7.5) -- (6.9,7.5) ;
 \draw[ultra thick, red] (7.1,7.5) -- (9.7,7.5) ;
    \draw (3.01, 13.95) .. controls (2.96, 13.53) and (2.91, 12.11) .. (2.86, 10.69);
    \draw (2.86, 10.69) .. controls (2.79, 8.73) and (2.72, 6.78) .. (2.65, 4.83);
    \draw (2.62, 3.91) .. controls (2.58, 2.78) and (2.54, 1.64) .. (2.50, 0.51);
    \draw (7.40, 13.95) .. controls (7.33, 13.55) and (7.27, 12.09) .. (7.20, 10.64);
    \draw (7.20, 10.64) .. controls (7.11, 8.63) and (7.02, 6.63) .. (6.93, 4.62);
    \draw (6.89, 3.70) .. controls (6.84, 2.57) and (6.79, 1.44) .. (6.73, 0.31);
    \node at (5,4.2) {\huge\textgreater};
    
   \node at (2.55,2.2) {\bf\huge$\wedge$}; 
    
    \node at (6.8,2.2) {\bf\huge$\wedge$}; 
   \node at (2,4) {\huge$\omega_1$}; 
   \node at (2,11.3) {\huge$\omega_1$};
      \node at (8,11.3) {\huge$\omega_2$};
      \node at (7.9,3.8) {\huge$\omega_2$}; \node at (5,7) {\huge$\omega_3$};
      \node at (1.9,7) {\huge$\omega_1$};
      \node at (8.2,7) {\huge$\omega_2$};
\node at (5,4.8) {\huge$u_1$};
\node at (5,5.7) {\huge$\aleph$};

\end{tikzpicture}
\hspace{1cm}
\begin{tikzpicture}[thick,scale=.65, every node/.style={transform shape}]
  \draw [red, ultra thick] (0,3) -- (1,5);
  \draw [ultra thick, ->] (1,5) .. controls (1.25,4.7) .. (1.5,5);
  \draw [red, ultra thick] (1.5,5) -- (3,5);
  \draw [red, ultra thick] (3.5,5) -- (4.5,3);
  \draw [ultra thick, ->] (3,5) .. controls (3.25,4.7) .. (3.5,5);
  \draw [ultra thick] (0,3) -- (4.5,3);
  
  \draw [red, ultra thick] (0,3) -- (1,1);
  \draw [ultra thick, <-] (1.5,1) .. controls (1.25,1.3) .. (1,1);
  \draw [red, ultra thick] (1.5,1) -- (3,1);
  \draw [ultra thick, ->] (3,1) .. controls (3.25,1.3) .. (3.5,1);
  \draw [red, ultra thick] (3.5,1) -- (4.5,3);
  \node at (2.25,4) {$\aleph_B$};
  \node at (2.25,2) {$\aleph_A$};
  \node at (2.25, 2.7) {$u_1$};
  \node at (2.25, 3) {\bf\textgreater};
  \node at (2.25,1.2) {$\omega_3$};
  \node at (.9,2) {$\omega_1$};
  \node at (3.7,2) {$\omega_2$};
  \node at (1.3,1.6) {$\frac{1}{2}$};
    \node at (3.3,1.6) {$\frac{1}{2}$};
  \node at (2.25, 3.3) {$u_1$};
 
  \node at (2.25,4.8) {$\omega_3$};
  \node at (.9,4) {$\omega_1$};
  \node at (3.7,4) {$\omega_2$};
  \node at (1.3,4.4) {$\frac{1}{2}$};
    \node at (3.3,4.4) {$\frac{1}{2}$};
\end{tikzpicture}
\hspace{1cm}
\begin{tikzpicture}[thick,scale=0.24, every node/.style={transform shape}][line width=1.5, line cap=round, line join=round]
    \draw (6.91, 4.16) .. controls (8.49, 4.08) and (9.60, 5.62) .. 
          (9.62, 7.32) .. controls (9.63, 8.94) and (9.07, 10.62) .. (7.66, 10.63);
    \draw (6.74, 10.64) .. controls (5.60, 10.66) and (4.46, 10.67) .. (3.32, 10.68);
    \draw (2.40, 10.69) .. controls (1.03, 10.71) and (0.37, 9.15) .. 
          (0.33, 7.60) .. controls (0.29, 5.99) and (1.15, 4.45) .. (2.64, 4.37);
    \draw (2.64, 4.37) .. controls (4.06, 4.30) and (5.48, 4.23) .. (6.91, 4.16);
    
    
 \draw[ultra thick, red] (0.3,7.5) -- (2.6,7.5);
 \draw[ultra thick, red] (2.9,7.5) -- (6.9,7.5) ;
 \draw[ultra thick, red] (7.1,7.5) -- (9.7,7.5) ;
    \draw (3.01, 13.95) .. controls (2.96, 13.53) and (2.91, 12.11) .. (2.86, 10.69);
    \draw (2.86, 10.69) .. controls (2.79, 8.73) and (2.72, 6.78) .. (2.65, 4.83);
    \draw (2.62, 3.91) .. controls (2.58, 2.78) and (2.54, 1.64) .. (2.50, 0.51);
    \draw (7.40, 13.95) .. controls (7.33, 13.55) and (7.27, 12.09) .. (7.20, 10.64);
    \draw (7.20, 10.64) .. controls (7.11, 8.63) and (7.02, 6.63) .. (6.93, 4.62);
    \draw (6.89, 3.70) .. controls (6.84, 2.57) and (6.79, 1.44) .. (6.73, 0.31);
    \node at (5,4.2) {\bf\huge\textgreater};
    
   \node at (2.55,2.2) {\bf\huge$\wedge$}; 
    
    \node at (6.8,2.2) {\bf\huge$\vee$}; 
   \node at (2,4) {\huge$\omega_1$}; 
   \node at (2,11.3) {\huge$\omega_1$};
      \node at (8,11.3) {\huge$\omega_2$};
      \node at (7.9,3.8) {\huge$\omega_2$}; 
     \node at (5,7) {\huge$\omega_3$};
\node at (5,4.8) {\huge$u_1$};
\node at (5,5.7) {\huge$\aleph$};
\node at (1.9,7) {\huge$\omega_1$};
      \node at (8.2,7) {\huge$\omega_2$};
\end{tikzpicture}
\hspace{1 cm}
\begin{tikzpicture}[thick,scale=.6, every node/.style={transform shape}]
  \draw [red, ultra thick] (0,3) -- (1,5);
  \draw [ultra thick, ->] (1,5) .. controls (1.25,4.7) .. (1.5,5);
  \draw [red, ultra thick] (1.5,5) -- (3,5);
  \draw [red, ultra thick] (3.5,5) -- (4.5,3);
  \draw [ultra thick,<-] (3,5) .. controls (3.25,4.7) .. (3.5,5);
  \draw [ultra thick] (0,3) -- (4.5,3);
  
  \draw [red, ultra thick] (0,3) -- (1,1);
  \draw [ultra thick, <-] (1.5,1) .. controls (1.25,1.3) .. (1,1);
  \draw [red, ultra thick] (1.5,1) -- (3,1);
  \draw [ultra thick, <-] (3,1) .. controls (3.25,1.3) .. (3.5,1);
  \draw [red, ultra thick] (3.5,1) -- (4.5,3);
   \node at (2.25,4) {$\aleph_B$};
  \node at (2.25,2) {$\aleph_A$};
  \node at (2.25, 2.7) {$u_1$};
  \node at (2.25, 3) {\bf\textgreater};
  \node at (2.25,1.2) {$\omega_3$};
  \node at (.9,2) {$\omega_1$};
  \node at (3.7,2) {$\omega_2$};
  \node at (1.3,1.6) {$\frac{1}{2}$};
    \node at (3.3,1.6) {$\frac{1}{2}$};
  \node at (2.25, 3.3) {$u_1$};
 
  \node at (2.25,4.8) {$\omega_3$};
  \node at (.9,4) {$\omega_1$};
  \node at (3.7,4) {$\omega_2$};
  \node at (1.3,4.4) {$\frac{1}{2}$};
    \node at (3.3,4.4) {$\frac{1}{2}$};
\end{tikzpicture} 
 (a) \quad \quad \quad \quad \quad \quad \quad \quad \quad \quad \quad \quad \quad \quad \quad \quad \quad (b) 
 \caption[Thrice punctured sphere with parallel strands]{(a) Thrice punctured sphere with parallel strands with the intercusp geodesics penciled in and corresponding region in $T_L$. 
  (b)Thrice punctured sphere with anti-parallel strands with the intercusp geodesics penciled in and corresponding region in $T_L$. Here we can see the intercusp and translational parameters in the $T_L$ regions.}
 \label{thricepuncturedspherecsf}
 \end{figure}

As the cusp torus for the crossing circle is cut in half, the translation parameters coming from the longitudinal strands in the projection plane will also be cut in half and are $1/2$ keeping with the convention that the meridional curve along the cross sectional torus has length $1$ and keeping with the right hand screw rule.  
For region $\aleph_A$ in Figure \ref{thricepuncturedspherecsf}(a) right, we have shape parameters:

$$\xi_1=\frac{\omega_1}{\frac{1}{2}u_1} = 1, \quad
\xi_2=\frac{-\omega_3}{\frac{1}{2}\times\frac{1}{2}} = 1, \quad 
\xi_3=\frac{\omega_2}{{\frac{1}{2}}u_1} = 1, \quad
$$ solving these equations gives us the relations $$\omega_3 = -\frac{1}{4}, \quad  \omega_1 =  \omega_2,   \quad \textrm{and} \quad u_1  =  2\omega_1.$$

Using Proposition \ref{matrixprop} we can check that these parameters are correct. Starting from the edge $\omega_1$ in the left side of region $\aleph_A$ and traveling counterclockwise we have:

$$\begin{bmatrix}
0&\omega_1
\\1&0 
\end{bmatrix} \begin{bmatrix}
1&1/2
\\0&1 
\end{bmatrix} \begin{bmatrix}
0&-1/4
\\1&0 
\end{bmatrix}  \begin{bmatrix}
1&\frac{1}{2}
\\0&1 
\end{bmatrix}  \begin{bmatrix}
0&\omega_2
\\1&0 
\end{bmatrix}  \begin{bmatrix}
1&-u_1
\\0&1 
\end{bmatrix}  =    \begin{bmatrix}
-\frac{\omega_1}{2}&0
\\0&-\frac{\omega_1}{2}
\end{bmatrix}. $$
 Case 2: Anti-parallel strands in the link diagram.

Notice here that the translation parameters coming from the longitudinal strands will be $\frac{1}{2}$ but their directions differ each going according to the right hand screw rule and the orientation on the strands, see Figure \ref{thricepuncturedspherecsf}(b).
For Region $\aleph_A$ we have shape parameters
$$\xi_1=\frac{\omega_1}{{\frac{1}{2}}u_1} = 1, \quad \xi_2=\frac{-\omega_2}{{\frac{1}{2}}u_1} = 1, \quad
\xi_3=\frac{\omega_3}{{\frac{1}{2}\times\frac{1}{2}}} = 1$$ solving these equations gives us the relations $$\omega_3 = \frac{1}{4}, \quad \omega_1 = -\omega_2, \quad \textrm{and} \quad u_1 = 2\omega_1.$$ 
Using Proposition \ref{matrixprop}, starting from the red edge $\omega_1$ in the left side of region $\aleph_A$ and traveling counterclockwise we have:

$$\begin{bmatrix}
0&\omega_1
\\1&0 
\end{bmatrix} \begin{bmatrix}
1&1/2
\\0&1 
\end{bmatrix} \begin{bmatrix}
0&1/4
\\1&0 
\end{bmatrix}  \begin{bmatrix}
1&-\frac{1}{2}
\\0&1 
\end{bmatrix}  \begin{bmatrix}
0&\omega_2
\\1&0 
\end{bmatrix}  \begin{bmatrix}
1&-u_1
\\0&1 
\end{bmatrix}$$ substituting in the above relations $  =    \begin{bmatrix}
-\frac{\omega_1}{2}&0
\\0&-\frac{\omega_1}{2}
\end{bmatrix} $.
\end{proof}

\subsection{Adaptation of T-T Method for FAL Diagram}
All the shaded faces on a FAL diagram come from thrice punctured spheres, and have parameters as determined in Theorem \ref{tps}. Hence to set up the T-T equations, we only need to understand the edge and crossing parameters on the regions in the projection plane. These regions are the white faces which have boundary alternating between the crossing geodesics and strands of the link diagram, except when it intersects the crossing circle. At the intersection of the region and the crossing circle, the boundary goes across a meridian of the crossing circle, see Figure \ref{TL}. Thus with an adjustment, we can write down the equations directly from the FAL diagram, without using $T_L$. 
\begin{lemma}\label{reverseorientation}
For each crossing circle, the two translational geodesics that correspond to the parts of the crossing circle component that bound the bigons, one going from $\omega_1$ to $\omega_1$ and the other going from $\omega_2$ to $\omega_2$ correspond to the meridional curve for that component, thus both are oriented the same way and are equal to $1$.
\end{lemma}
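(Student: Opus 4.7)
My plan is to identify the two translational parameters along the bigon-bounding arcs of a given crossing circle as the meridian generator of its cusp torus, and then to read off the length and orientation from the horosphere normalization.

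Fix a crossing circle $C$ and let $\alpha_1$ (respectively $\alpha_2$) denote the arc of $C$ joining its two $\omega_1$ crossings (respectively its two $\omega_2$ crossings). By Theorem \ref{tps}, two like-labeled crossings correspond to the same intercusp geodesic, so on the cusp torus of $C$ each arc $\alpha_i$ closes up into a loop and represents an element of the peripheral subgroup. The main task is to show that loop is the meridian $\mu$.

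To see this, examine the fundamental domain of the cusp torus of $C$ coming from the polyhedral decomposition: truncating $P_L$ at the ideal vertex of $C$ produces a rectangular region with two pairs of opposite sides. One pair lies inside the bowtie hexagons (these are the $u_1$-edges that appear in the proof of Theorem \ref{tps}); the other pair lies on the adjacent white faces of $T_L$ and corresponds precisely to the translational geodesics described in the lemma. Because this rectangle is a fundamental domain and the meridian of $C$ has been normalized to length $1$, this pair of opposite sides is exactly the meridian direction, of length $1$. The orientation then follows from the right-hand screw rule applied to the orientation of $C$: both $\alpha_1$ and $\alpha_2$ are traversed in agreement with the chosen orientation of $C$, so both induce the same positive meridian generator. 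Therefore both translational parameters equal $+1$, and they are oriented the same way.

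The main obstacle is justifying that the opposite-side identification in the fundamental rectangle is $\mu$ rather than some other primitive element of the peripheral subgroup (for instance, the longitude or a $(p,q)$-combination). The cleanest verification beyond the geometric picture above is via the matrix formalism of Proposition \ref{matrixprop}: form the product of intercusp- and translational-geodesic matrices around the shaded face, substitute the candidate values (including the proposed $\pm 1$ for the white-face edges), and check that the result reduces to a scalar multiple of the identity, mirroring the product-of-matrices verification carried out at the end of the proof of Theorem \ref{tps}. This reduces the homotopy-class identification to an explicit arithmetic check.
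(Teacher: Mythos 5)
You have the right picture --- the cusp cross-section of $C$ is tiled by rectangles coming from the truncated ideal vertices of $P_L$, with one pair of opposite sides on the shaded (bowtie) faces and the other pair on the white faces, and the arcs in the lemma are the white-face sides --- but the step that actually carries the lemma is missing. Saying that the rectangle is a fundamental domain and that the meridian ``has been normalized to length $1$'' does not identify \emph{which} pair of sides is the meridian: the normalization fixes the length of the meridian once you already know which curve it is, and the sides of a fundamental rectangle need not form a meridian--longitude pair a priori (indeed the shaded sides here are only \emph{half} a longitude each). The identification the paper relies on is geometric: the white faces lie in the totally geodesic projection plane, and the crossing circle meets the projection plane orthogonally in two points, so the projection plane intersects the cusp torus of $C$ in two meridian curves --- and these two curves are exactly the translational geodesics along the arcs of $C$ bounding the bigons. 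That is why each such arc is a \emph{full} meridian (hence has parameter of modulus $1$), and why the two of them, being homotopic meridians oriented by the right-hand screw rule, carry the same orientation --- which is also why one of the two arcs must afterwards have its apparent orientation in the diagram flipped, as in Figure \ref{changeorientation}(c).

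Your proposed repair does not close this gap. The matrix product of Proposition \ref{matrixprop} around the shaded (bowtie) face involves only the crossing labels $\omega_1,\omega_2,\omega_3$, the two half-meridian edges $\tfrac12$ of the projection-plane strands, and the shaded-side edge $u_1$; the white-face edges whose value you are trying to establish simply do not occur in that product, so ``substituting the candidate values'' there checks nothing. Even a holonomy check around an adjacent white face would only confirm consistency of a complete set of parameters; it cannot by itself determine the peripheral homotopy class represented by a single edge. What is needed is precisely the observation that the white faces sit in the projection plane and that the meridians of a crossing circle lie flat in the projection plane; with that in hand, the rest of your argument goes through.
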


\begin{proof}

The part of the cusp torus corresponding to a crossing circle lies on the hexagon in $T_L$, such that the meridians lie flat in the projection plane. The orientations on the meridians are obtained by the right hand screw rule. The meridians on opposite sides of the crossing circle are homotopic and are oriented as in Figure \ref{changeorientation}(b).\end{proof} Consequently, starting from a FAL diagram, we can reorient parts of the crossing circle on the FAL diagram to agree with the orientations on the meridians. See Figure \ref{changeorientation}(c). With this adjustment we can now write the T-T equations directly from the FAL diagrams without using the trivalent graph $T_L$.

\begin{figure}
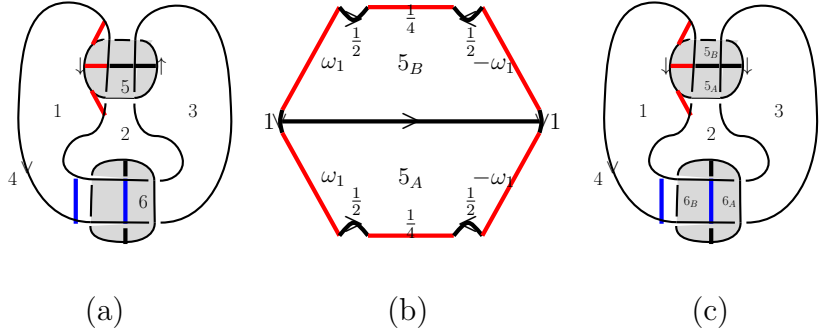

    \centering
    \include{changeorientation}
\hfill \hfill  (a) \hfill \hfill (b) \hfill \hfill (c) \hfill \hfill \hfill 
    \caption[Diagram manipulation]{(a) FAL diagram with the
      orientations. (b) A portion of $T_L$ after the cut-slice-flatten. (c) The
      manipulation on the diagram where the orientation for one side
      of the crossing circle is flipped.}
    \label{changeorientation}
\end{figure}


Now we need to analyze the relationship between edge labels coming from adjacent regions of an edge.  
\begin{lemma}\label{edge}
The edge labels on opposite sides of an edge coming from the longitudinal strands without a half-twist in the FAL diagram are equal.
\end{lemma}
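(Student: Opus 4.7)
The plan is to reduce the claim to convention 2 (from the discussion of the T-T method) after verifying that the white projection-plane regions do not fall under the exception flagged in the remark preceding this lemma. I would establish that convention 2 applies on both sides of a longitudinal-strand edge and that no meridional correction is incurred in the absence of a half-twist, whence $u_R = u_S$.

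First, I would note that the regions $R$ and $S$ flanking a longitudinal-strand edge $e$ both come from the projection-plane part of the diagram. By the cut-slice-flatten construction (Proposition \ref{PLprop}), white faces of $P_L$ correspond directly to projection-plane regions, so the ideal polygonal faces $F_R$ and $F_S$ are the same as the faces in the diagram. The remark before this lemma warns that convention 2 fails only for faces that differ from their diagram counterparts, which occurs exclusively for the shaded crossing-circle faces (those were handled separately by Theorem \ref{tps}). Hence for $e$ we have $u_R - u_S \in \{0, \pm 1\}$, determined by whether the strand transitions overpass-to-underpass, underpass-to-overpass, or stays level at the two endpoints of $e$.

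Next I would argue that in a FAL without half-twists the strand never transitions along $e$. The two endpoints of $e$ are intersections of the strand with crossing circles, and by the no-half-twist hypothesis the two strands passing through any given crossing circle do so without crossing each other, so the strand maintains a single, consistent over/under status at both bigons of each crossing circle. Equivalently, the two copies of $P_L$ glue across every bowtie by the identity (Figure \ref{bowtieglue}, leftmost), which corresponds geometrically to the reflection across the projection plane being an isometry of $S^3 \setminus L$. Under this reflection the cusp torus of the longitudinal strand is invariant, and its two halves, bounding the horospherical arcs realizing $u_R$ and $u_S$, are exchanged. Since these arcs both run along the longitudinal direction of the cusp between the same pair of intercusp geodesics (those determined by the crossings at the endpoints of $e$), the meridional correction in convention 2 vanishes and $u_R = u_S$.

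The main obstacle is this second step: verifying that the projection-plane reflection is a genuine isometry of the FAL complement that preserves the longitudinal direction when there are no half-twists, and that this symmetry identifies the $R$-arc with the $S$-arc as oriented complex translations and not just as unoriented segments. This amounts to a compatibility check between the right-hand-screw meridian orientation fixed in Lemma \ref{reverseorientation} and the direction-of-travel sign $\epsilon_i$ in the translational matrix $\begin{bmatrix} 1 & \epsilon_i u_i \\ 0 & 1 \end{bmatrix}$, since any longitudinal direction reversal in the reflection would otherwise have to be cancelled by a simultaneous flip of $\epsilon_i$.
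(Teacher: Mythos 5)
There is a genuine gap, and it sits exactly where the remark preceding this lemma warns you to look. Your argument rests on applying convention 2 to the literal FAL diagram and then claiming the meridional correction vanishes because ``the strand never transitions along $e$.'' That claim is not justified by your stated reason and is in fact false at the level of the diagram: the absence of half-twists only says that the two strands threading a crossing circle do not cross \emph{each other}; it says nothing about the strand's over/under status relative to the crossing-circle arcs. For the crossing circle to actually clasp the strand (linking number $\pm 1$), the strand must pass over one arc and under the other arc of every crossing circle it threads, so edges of the longitudinal strand do go from overpass to underpass in the diagram, and a naive reading of convention 2 would produce a correction of $\pm 1$. The resolution is not that the transition is absent but that the diagram's crossings are the wrong objects: the two strand--circle crossings at a given crossing circle bound a bigon that collapses, so they correspond to a single crossing geodesic of $P_L$ (this is already used in Theorem \ref{tps}), and the relation between $u_R$ and $u_S$ must be read off from $T_L$ and the polyhedral decomposition, not from the diagram. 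This is precisely the ``modification'' the remark announces, so a proof that begins by certifying that convention 2 applies unmodified is assuming what needs to be shown.

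Your third paragraph gestures toward the right geometric picture but misidentifies the symmetry. The reflection in the projection plane fixes the projection plane pointwise, hence fixes both white regions $R$ and $S$ and both horospherical arcs realizing $u_R$ and $u_S$ (these arcs lie in the totally geodesic white faces, which sit in the projection plane); what it exchanges is the two copies of $P_L$, i.e.\ the upper and lower halves of the cusp torus, not the $R$-side and $S$-side. So this symmetry cannot identify $u_R$ with $u_S$, and you explicitly leave the decisive compatibility check as an open ``obstacle.'' The paper's actual argument is shorter and avoids both issues: by \cite{purcell-fal} the cusp of the longitudinal component is tiled by rectangles (the links of the $4$-valent ideal vertices of the right-angled polyhedron $P_L$), the arcs realizing $u_R$ and $u_S$ are opposite white sides of the vertex-link rectangle of the edge, they join the \emph{same} pair of intercusp geodesics because of the bigon collapse, and the identity gluing of the white faces rules out any meridional shear, so the two translations are homotopic across the rectangle and hence equal. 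To repair your write-up you would need to replace the convention-2 reduction with this rectangle/identity-gluing argument (or prove the collapsed-crossing version of convention 2 for $T_L$ from scratch).
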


\begin{proof}
Purcell showed how the cusps for FAL are tiled by rectangles \cite{purcell-fal}. Since the white faces on the two polyhedra are glued by identity, these rectangles can be seen in between the white faces on $T_L$, see Figure \ref{TL}.  The parts of the longitude corresponding to the adjacent regions are homotopic across the sliced torus. Hence they are equal.
\end{proof}

In \cite{purcell-fal}, Purcell showed that the complements of a FAL with and without a half-twist have the same polyhedral decomposition, but with different gluing on shaded faces.  Thus the faces of the regions of a FAL diagram with half-twist do not represent white faces of $P_L$.  So for FAL with half-twists we will take $T_L$ the same as the one for the corresponding FAL without half-twists.  We look at faces of $T_L$, we can find the intercusp geodesic parameters and translational parameters from analyzing the shear that is caused by the half-twist gluing. This will be done below when we look at the cusps in \S4. 

\subsection{Examples}
\subsubsection{Borromean Ring FAL} See Figure \ref{br}.
\begin{figure}
\centering
\definecolor{linkcolor0}{rgb}{0.85, 0.15, 0.15}
\definecolor{linkcolor1}{rgb}{0.15, 0.15, 0.85}
\definecolor{linkcolor2}{rgb}{0.15, 0.85, 0.15}
\begin{tikzpicture}[every node/.style={scale=.5},scale=.45][ line width=5.1, line cap=round, line join=round,every node/.style={transform shape}]
\draw (3,1.1) -- (3,2.2);
\draw (7.2,1)--(7.2, 2.4);
\draw (4.7,6.2) -- (5.7,6.2);

    \draw (4.40, 9.86) .. controls (3.77, 9.82) and (3.60, 9.05) .. 
          (3.58, 8.34) .. controls (3.56, 7.59) and (3.98, 6.88) .. (4.66, 6.89);
    \draw (4.66, 6.89) .. controls (5.01, 6.89) and (5.36, 6.89) .. (5.71, 6.90);
    \draw (5.71, 6.90) .. controls (6.45, 6.90) and (6.88, 7.68) .. 
          (6.89, 8.48) .. controls (6.90, 9.25) and (6.63, 10.04) .. (5.99, 9.99);
    \draw (5.53, 9.95) .. controls (5.29, 9.93) and (5.06, 9.91) .. (4.83, 9.90);
    \draw (3.47, 2.40) .. controls (3.44, 3.05) and (4.29, 3.18) .. 
          (5.06, 3.20) .. controls (5.86, 3.23) and (6.70, 3.00) .. (6.71, 2.31);
    \draw (6.71, 2.31) .. controls (6.73, 1.87) and (6.74, 1.43) .. (6.75, 0.99);
    \draw (6.75, 0.99) .. controls (6.76, 0.34) and (5.93, 0.22) .. 
          (5.17, 0.22) .. controls (4.43, 0.22) and (3.58, 0.22) .. (3.55, 0.87);
    \draw (3.53, 1.27) .. controls (3.52, 1.50) and (3.51, 1.74) .. (3.49, 1.97);
    \draw (4.60, 9.88) .. controls (4.58, 10.84) and (3.76, 11.59) .. 
          (2.84, 11.43) .. controls (1.12, 11.14) and (0.79, 8.37) .. 
          (0.52, 6.11) .. controls (0.23, 3.60) and (1.33, 1.06) .. (3.54, 1.03);
    \draw (3.54, 1.03) .. controls (4.50, 1.02) and (5.46, 1.00) .. (6.42, 0.99);
    \draw (7.07, 0.98) .. controls (9.15, 0.95) and (9.72, 3.57) .. 
          (9.64, 6.02) .. controls (9.57, 8.33) and (9.48, 11.11) .. 
          (7.63, 11.43) .. controls (6.68, 11.60) and (5.77, 10.92) .. (5.76, 9.97);
    \draw (5.76, 9.97) .. controls (5.75, 9.05) and (5.73, 8.14) .. (5.72, 7.22);
    \draw (5.71, 6.57) .. controls (5.69, 5.74) and (5.88, 4.86) .. 
          (6.57, 4.90) .. controls (7.17, 4.94) and (7.54, 4.31) .. 
          (7.63, 3.65) .. controls (7.72, 3.01) and (7.49, 2.34) .. (6.93, 2.32);
    \draw (6.39, 2.30) .. controls (5.42, 2.27) and (4.45, 2.24) .. (3.48, 2.21);
    \draw (3.48, 2.21) .. controls (2.85, 2.19) and (2.47, 2.87) .. 
          (2.55, 3.58) .. controls (2.63, 4.29) and (3.10, 4.91) .. 
          (3.71, 4.78) .. controls (4.44, 4.62) and (4.68, 5.60) .. (4.66, 6.56);
    \draw (4.65, 7.21) .. controls (4.63, 8.10) and (4.61, 8.99) .. (4.60, 9.88);
  \node at (5,.2) {\bf\textgreater};

\node at (6.8,8) {\bf$\wedge$};

\node at (3.6,8) {\bf$\wedge$};

\node at (5,1) {\bf\textgreater};

\node at (5,3.2) {\bf\textgreater};

\node at (2,6) {$\aleph$};
\node at (5,4.2) {$\gimel$};
\node at (8,6) {$\beth$};

\node at (5,3.5) {\Large$1$};

\node at (3.1,.8) {$\omega_1$};

\node at (7.2,.7) {$\omega_1$};
\node at (3.15,3) {$-\omega_1$};
\node at (7.,3.) {$-\omega_1$};

\node at (4.2,6.7) {$\omega_2$};

\node at (4.2,10.2) {$\omega_2$};
\node at (6.3,10.3) {$-\omega_2$};
\node at (6.3,6.65) {$-\omega_2$};

\node at (1,5.7) {\large$u_1$};

\node at (3.3,8.7) {\large$1$};
\node at (7.3,8.7) {\large$1$};
\node at (2,3.6)  {\large$u_2$};
\node at (3,3.6) {\large$u_2$};
\node at (7,3.6) {\large$u_3$};
\node at (8,3.6) {\large$u_3$};
\node at (9.2,5.6) {\large$u_4$};
\node at (2.6,1.8) {\large$\frac{1}{4}$};

\node at (7.6,1.8) {\large$\frac{1}{4}$};
\node at (5.2,5.6) {\large$\frac{1}{4}$};

\end{tikzpicture}
\caption[Borromean FAL]{Borromean ring FAL with crossing and edge parameters.}
\label{br}
\end{figure}
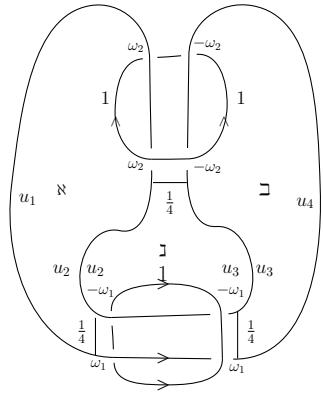
Recall, for a $3$-sided region all $\xi_i = 1$.  Region $\aleph$:
$$\xi_1=\frac{-\omega_2}{u_1}=1, \quad \xi_2=\frac{-\omega_2}{u_2}=1,
\quad \xi_3=\frac{-\frac{1}{4}}{u_1u_2}=1,$$ $\Longrightarrow$
\quad \quad $u_1 = u_2 = -\omega_2$ \quad and \quad  $u_1^2=-\frac{1}{4}$ \quad \quad 
$\Longrightarrow$ \quad \quad 
$u_1= \pm\frac{i}{2}.$\\
Region $\beth$:
$$\xi_1=\frac{-\omega_2}{u_3}=1, \quad \xi_2=\frac{-\omega_2}{u_4}=1,
\quad \xi_3=\frac{-\frac{1}{4}}{u_3u_4}=1,$$ $\Longrightarrow$
$$u_3 = u_4 = -\omega_2 = \pm\frac{i}{2}.$$
Region $\gimel$:
$$\xi_1=\frac{-\omega_1}{u_3}=, \quad \xi_2=\frac{-\omega_1}{u_2}=1,
\quad \xi_3=\frac{-\frac{1}{4}}{u_2u_3}=1,$$ $\Longrightarrow$
$$u_2 = u_3 = -\omega_1 = \pm\frac{i}{2}.$$


\subsubsection{$FAL_{4_1}$}
We denote the FAL shown in Figure \ref{4cc} as $FAL_{4_1}$.
\begin{figure}
\centering
\begin{tikzpicture}[every node/.style={scale=0.5},scale=.51][line width=4.1, line cap=round, line join=round]
 \draw[red] (4,4.15) -- (4,4.8);
  \draw[red] (6,4.15) -- (6,4.8);
   \draw[red] (4,1) -- (4,1.6);
    \draw[red] (6,1) -- (6,1.6);
  
   \draw[red] (3.8,7) -- (3.8,7.65);
   \draw[red] (5.95,7) -- (5.95,7.6);
  
    \draw (3.37, 6.96) .. controls (3.40, 6.61) and (3.01, 6.43) .. 
          (2.62, 6.39) .. controls (2.22, 6.35) and (1.82, 6.51) .. (1.82, 6.85);
    \draw (1.82, 7.15) .. controls (1.82, 7.32) and (1.82, 7.49) .. (1.82, 7.67);
    \draw (1.82, 7.95) .. controls (1.82, 8.28) and (2.17, 8.46) .. 
          (2.52, 8.52) .. controls (2.93, 8.58) and (3.26, 8.21) .. (3.30, 7.77);
    \draw (3.30, 7.77) .. controls (3.33, 7.50) and (3.35, 7.23) .. (3.37, 6.96);
    \draw (7.75, 7.59) .. controls (7.75, 7.95) and (7.37, 8.17) .. 
          (6.97, 8.18) .. controls (6.59, 8.20) and (6.20, 8.09) .. (6.19, 7.76);
    \draw (6.18, 7.53) .. controls (6.18, 7.40) and (6.17, 7.28) .. (6.17, 7.15);
    \draw (6.16, 6.89) .. controls (6.15, 6.53) and (6.55, 6.34) .. 
          (6.96, 6.34) .. controls (7.37, 6.34) and (7.77, 6.56) .. (7.76, 6.93);
    \draw (7.76, 6.93) .. controls (7.76, 7.15) and (7.76, 7.37) .. (7.75, 7.59);
    \draw (5.73, 4.71) .. controls (5.74, 5.13) and (5.57, 5.55) .. 
          (5.19, 5.57) .. controls (4.91, 5.58) and (4.60, 5.54) .. 
          (4.46, 5.29) .. controls (4.38, 5.13) and (4.30, 4.95) .. (4.29, 4.77);
    \draw (4.29, 4.59) .. controls (4.29, 4.48) and (4.29, 4.36) .. (4.29, 4.24);
    \draw (4.28, 3.99) .. controls (4.27, 3.66) and (4.61, 3.46) .. 
          (4.97, 3.44) .. controls (5.35, 3.43) and (5.69, 3.70) .. (5.70, 4.08);
    \draw (5.70, 4.08) .. controls (5.71, 4.29) and (5.72, 4.50) .. (5.73, 4.71);
    \draw (5.66, 1.56) .. controls (5.68, 1.93) and (5.37, 2.24) .. 
          (4.99, 2.26) .. controls (4.61, 2.27) and (4.27, 2.04) .. (4.26, 1.68);
    \draw (4.25, 1.40) .. controls (4.25, 1.27) and (4.24, 1.14) .. (4.24, 1.01);
    \draw (4.23, 0.73) .. controls (4.22, 0.40) and (4.54, 0.16) .. 
          (4.89, 0.18) .. controls (5.28, 0.20) and (5.59, 0.50) .. (5.61, 0.89);
    \draw (5.61, 0.89) .. controls (5.63, 1.11) and (5.64, 1.33) .. (5.66, 1.56);
    \draw (8.00, 7.57) .. controls (8.90, 7.54) and (9.79, 7.82) .. 
          (9.65, 8.50) .. controls (9.42, 9.51) and (6.87, 9.61) .. 
          (4.97, 9.69) .. controls (3.06, 9.76) and (0.52, 9.87) .. 
          (0.32, 8.89) .. controls (0.18, 8.23) and (0.99, 7.87) .. (1.82, 7.84);
    \draw (1.82, 7.84) .. controls (2.23, 7.82) and (2.64, 7.80) .. (3.05, 7.78);
    \draw (3.55, 7.76) .. controls (4.43, 7.73) and (5.31, 7.69) .. (6.18, 7.65);
    \draw (6.18, 7.65) .. controls (6.62, 7.63) and (7.06, 7.61) .. (7.50, 7.60);
    \draw (4.24, 0.88) .. controls (2.43, 0.86) and (0.87, 2.16) .. 
          (0.74, 3.93) .. controls (0.64, 5.37) and (0.61, 7.00) .. (1.82, 6.98);
    \draw (1.82, 6.98) .. controls (2.25, 6.97) and (2.69, 6.97) .. (3.13, 6.96);
    \draw (3.54, 6.95) .. controls (3.99, 6.95) and (4.10, 6.37) .. 
          (3.98, 5.83) .. controls (3.87, 5.30) and (3.86, 4.71) .. (4.29, 4.71);
    \draw (4.29, 4.71) .. controls (4.69, 4.71) and (5.08, 4.71) .. (5.48, 4.71);
    \draw (5.82, 4.71) .. controls (6.17, 4.71) and (6.05, 5.35) .. 
          (5.95, 5.88) .. controls (5.85, 6.43) and (5.73, 7.05) .. (6.17, 7.03);
    \draw (6.17, 7.03) .. controls (6.62, 7.00) and (7.07, 6.97) .. (7.51, 6.94);
    \draw (8.01, 6.91) .. controls (9.20, 6.84) and (9.12, 5.26) .. 
          (9.00, 3.87) .. controls (8.85, 2.21) and (7.51, 0.90) .. (5.86, 0.89);
    \draw (5.36, 0.89) .. controls (4.99, 0.88) and (4.61, 0.88) .. (4.24, 0.88);
    \draw (4.25, 1.53) .. controls (3.73, 1.52) and (3.44, 2.09) .. 
          (3.47, 2.67) .. controls (3.49, 3.13) and (3.53, 3.83) .. 
          (3.67, 3.98) .. controls (3.80, 4.13) and (4.06, 4.13) .. (4.28, 4.12);
    \draw (4.28, 4.12) .. controls (4.67, 4.11) and (5.06, 4.10) .. (5.45, 4.08);
    \draw (5.88, 4.07) .. controls (6.41, 4.06) and (6.57, 3.42) .. 
          (6.56, 2.81) .. controls (6.55, 2.21) and (6.36, 1.57) .. (5.83, 1.56);
    \draw (5.41, 1.55) .. controls (5.02, 1.54) and (4.64, 1.54) .. (4.25, 1.53);
  
   \node at (5,.2) {\bf\large\textgreater};
\node at (5,7.7) {\bf\large\textgreater};
\node at (5,.9) {\bf\large\textgreater};
\node at (5,2.3) {\bf\large\textgreater};
\node at (5,3.4) {\bf\large\textgreater};
\node at (5,5.6) {\bf\large\textgreater};
\node at (2.5,6.4) {\bf\large\textless};
\node at (2.5,8.55) {\bf\large\textless};
\node at (6.5,2.4) {\bf\large$\wedge$};
\node at (7,6.4) {\bf\large\textless};
\node at (7,8.2) {\bf\large\textless};

\node at (5.95,5.8) {\bf\large$\vee$};

\node at (5,8.7) {\Large$\aleph$};
\node at (8,5) {\Large$\gimel$};
\node at (2,5) {\Large$\beth$};
\node at (5,6.5) {\Large$\daleth$};

\node at (5,3.2) {$1$};
\node at (5,2.5) {$1$};
\node at (5,6) {$1$};
\node at (2.5,6) {$1$};
\node at (7,6) {$1$};
\node at (1.5,6.6) {\large$\omega_1$};
\node at (3.5,6.6) {\large$\omega_1$};
\node at (1.5,8.25) {\large$-\omega_1$};
\node at (3.5,8.25) {\large$-\omega_1$};

\node at (6.2,6.6) {\large$\omega_2$};
\node at (8.1,6.6) {\large$\omega_2$};
\node at (6,8) {\large$-\omega_2$};
\node at (8,8) {\large$-\omega_2$};
\node at (4.1,3.8) {$\omega_3$};
\node at (4.1,5.3) {$\omega_3$};

\node at (5.85,5.3) {$\omega_3$};
\node at (6,3.8) {$\omega_3$};
\node at (4,.6) {$\omega_4$};
\node at (4.,1.9) {$\omega_4$};
\node at (5.9,1.9) {$\omega_4$};
\node at (5.9,.6) {$\omega_4$};

\node at (5,9.3) {\large$u_2$};
\node at (5,8) {\large$u_1$};
\node at (5,7.4) {\large$u_1$};

\node at (1.2,4) {\large$u_3$};

\node at (3.1,3) {\large$u_5$};
\node at (3.85,3) {\large$u_5$};

\node at (3.5,5.5) {\large$u_4$};
\node at (4.3,5.7) {\large$u_4$};

\node at (2.5,8.8) {$1$};
\node at (7,8.6) {$1$};

\node at (4.1,7.3) {$\frac{1}{4}$};
\node at (5.75,7.3) {$\frac{1}{4}$};

\node at (3.7,4.5) {$-\frac{1}{4}$};

\node at (6.35,4.5) {$-\frac{1}{4}$};

 \node at (6.25,1.3) {$-\frac{1}{4}$};
 \node at (3.6,1.3) {$-\frac{1}{4}$};
\node at (8.5,4) {\large$u_6$};
  \node at (6.4,5.4) {$u_7$};
   \node at (5.65,5.8) {$u_7$};
 \node at (7,3) {\large$u_8$};  
    \node at (6.2,3) {\large$u_8$};
   \node at (5,2.8) {\large$E$};
\end{tikzpicture}
\caption{$FAL_{4_1}$}
\label{4cc}
\end{figure}
\\
Region $\aleph$: 

This is a four-sided region with shape parameters:
$$\xi_1=\frac{-\omega_1}{u_2}, \quad \xi_2=\frac{-\omega_2}{u_2},
\quad \xi_3=\frac{-\omega_2}{u_1}, \quad \xi_4=\frac{-\omega_1}{u_1}.$$  The sum of consecutive shape parameters are $1$. 
$$\frac{-\omega_1}{u_2} - \frac{\omega_2}{u_2} = 1, \quad
\frac{-\omega_2}{u_2} - \frac{\omega_2}{u_1} = 1, \quad \frac{-\omega_2}{u_1} - \frac{\omega_1}{u_1} = 1, \quad \frac{-\omega_1}{u_1} - \frac{\omega_1}{u_2} = 1$$ solving gives us the relations
$$u_1 = u_2 = -2\omega_1 = -2\omega_2, \quad \textrm{and} \quad \omega_1 = \omega_2.$$

Region $\beth$: 

This is a four-sided region with shape parameters:
$$\xi_1=\frac{-\frac{1}{4}}{u_3u_5}, \quad \xi_2=\frac{-\omega_1}{u_3},
\quad \xi_3=\frac{-\omega_1}{u_4}, \quad \xi_4=\frac{-\frac{1}{4}}{u_4u_5}.$$
Thus we have equations:
$$\frac{-\frac{1}{4}}{u_3u_5} - \frac{\omega_1}{u_3} = 1 , \quad \frac{-\omega_1}{u_3} - \frac{\omega_1}{u_4} = 1, \quad \frac{-\omega_1}{u_4} - \frac{\frac{1}{4}}{u_4u_5} = 1, \quad \frac{-\frac{1}{4}}{u_4u_5} - \frac{\frac{1}{4}}{u_3u_5} = 1$$

solving gives us the relations $u_4 = u_3 =-2\omega_1$, and $u_5 = \frac{1}{4\omega_1}$

Region $\gimel$: 
$$\xi_1=\frac{-\frac{1}{4}}{u_8u_6}, \quad \xi_2=\frac{-\frac{1}{4}}{u_8u_7},
\quad \xi_3=\frac{-\omega_2}{u_7}, \quad \xi_4=\frac{-\omega_2}{u_6}$$  This is a four-sided region with equations:
$$\frac{-\frac{1}{4}}{u_8u_6} - \frac{\frac{1}{4}}{u_8u_7} = 1 , \quad \frac{-\frac{1}{4}}{u_8u_7} - \frac{\omega_2}{u_7} = 1, \quad \frac{-\omega_2}{u_7} - \frac{\omega_2}{u_6} = 1,  \quad \frac{-\omega_2}{u_6} - \frac{\frac{1}{4}}{u_8u_6} = 1$$ 
solving gives us the relations
$$u_6 = u_7 = -2\omega_2, \quad \textrm{and} \quad u_8 =\frac{1}{4\omega_2}.$$

Region $\daleth$: 
$$\xi_1=\frac{-\frac{1}{4}}{u_{4}u_{1}}, \quad \xi_2=\frac{-\frac{1}{4}}{u_{1}u_{7}},
\quad \xi_3=\frac{\omega_3}{u_{7}}, \quad \xi_4=\frac{\omega_3}{u_{4}}$$ This is a four-sided region with equations:
$$\frac{-\frac{1}{4}}{u_{4}u_{1}} - \frac{\frac{1}{4}}{u_{1}u_{7}} = 1 , \quad \frac{-\frac{1}{4}}{u_{1}u_{7}} + \frac{\omega_3}{u_{7}} = 1,  \quad \frac{\omega_3}{u_{7}} + \frac{\omega_3}{u_{4}} = 1,  \quad \frac{\omega_3}{u_{4}} - \frac{\frac{1}{4}}{u_{4}u_{1}} = 1$$ solving gives us the relations
$$u_{4} = u_{7} = 2\omega_3, \quad \textrm{and} \quad u_{1} =-\frac{1}{4\omega_3}.$$

Region $E$:
This is a four-sided region with shape parameters:
$$\xi_1=\frac{-\omega_4}{u_{5}}, \quad \xi_2=\frac{\omega_3}{u_{5}},
\quad \xi_3=\frac{\omega_3}{u_{8}}, \quad \xi_4=\frac{-\omega_4}{u_{8}}.$$  Thus the equations are:
$$\frac{-\omega_4}{u_{5}} + \frac{\omega_3}{u_{5}} = 1 , \quad \frac{\omega_3}{u_{5}} + \frac{\omega_3}{u_{8}} = 1,  \quad \frac{\omega_3}{u_{8}} - \frac{\omega_4}{u_{8}} = 1,  \quad \frac{-\omega_4}{u_{8}} - \frac{\omega_4}{u_{5}} = 1$$ solving gives us the relations
$$u_{5} = u_{8} = 2\omega_3 = -2\omega_4, \quad \textrm{and} \quad \omega_3 = -\omega_4.$$

Using the fact that opposite sides of an edge are equal, we get $$\omega_1 =\pm \frac{ \sqrt{2}}{4}i.$$

\section{FAL Cusp Shapes}
\label{sec:cusp}
In \cite{purcell-fal} Purcell described a method to compute the cusp shapes for each cusp of a FAL using the polyhedral decomposition, by lifting the ideal vertex corresponding to a crossing circle to $\infty$, constructing a circle packing and computing the radii of each circle. 

In Theorem \ref{mainthm} below we prove that the extension of the T-T method to FALs in \S3 enable us to compute cusp shapes in a simpler way, by solving algebraic equations derived directly from the FAL diagram, without constructing the polyhedral decomposition, and circle packings.

\begin{theorem}\label{mainthm}
 Let $L$ be a FAL diagram and let $\omega$ be the parameter of the crossing geodesic for a crossing
  circle $C$ of $L$. 
  \begin{enumerate}

      \item  If $L$ has no half-twist at $C$, then the cusp shape of $C$ is  $4\omega$.

  \item 
   If $L$ has a RH half-twist at $C$, then the cusp shape of $C$ is 
$ \displaystyle{\frac{4\omega}{1+ 2\omega}}$. 
  
  \item 
   If $L$ has a LH half-twist at $C$, then the cusp shape of $C$ is 
$ \displaystyle{\frac{4\omega}{1- 2\omega}}$.  
  
  \end{enumerate}

\end{theorem}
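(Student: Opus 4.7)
The plan is to compute the meridian and longitude of the cusp torus of $C$ directly from the T--T parameters, using the polyhedral decomposition of Proposition \ref{PLprop} together with Theorem \ref{tps} and Lemma \ref{reverseorientation}.

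First I would describe the cusp torus of $C$ as a quotient built from the two copies of $P_L$ meeting at the $4$--valent ideal vertex $C$. The link of $C$ in one polyhedron is a Euclidean parallelogram whose four sides, in cyclic order, correspond to the two bowtie (shaded) triangles and the two projection--plane (white) regions meeting at $C$. By Theorem \ref{tps} the translational parameter at $C$ inside each bowtie triangle is $u_1=2\omega$, giving shaded sides of value $\pm 2\omega$; and by Lemma \ref{reverseorientation} the translation along a bigon arc of $C$ equals $1$ and represents the meridian, so the white sides have value $\pm 1$ and $\mu=1$.

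For part (1), in the no--half--twist gluing (Figure \ref{bowtieglue}, left), corresponding bowtie triangles in the two polyhedra glue directly. A longitudinal loop around $C$ on the cusp torus then crosses one bowtie triangle of each polyhedron, so the longitude is $\lambda=2\omega+2\omega=4\omega$ (the white contributions from the two polyhedra cancel, since they traverse opposite white sides). This gives cusp shape $\lambda/\mu=4\omega$. For parts (2) and (3), the half--twist gluing (Figure \ref{bowtieglue}, right) swaps the two bowtie triangles between the polyhedra, which has the effect of shifting the canonical meridian by $\pm\tfrac12\lambda$ (sign determined by RH versus LH); the new meridian becomes $\mu'=1\pm 2\omega$ while $\lambda=4\omega$ stays fixed, giving cusp shape $4\omega/(1\pm 2\omega)$.

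The main obstacle will be rigorously showing that the bowtie swap in the half--twist gluing produces exactly a half Dehn twist (i.e., a shift by $\pm\tfrac12\lambda$) on the cusp torus, rather than a shear of some other magnitude. I would verify this by drawing explicit fundamental domains for both gluings side by side and tracking the meridian loop, and cross--check via the matrix representation of Proposition \ref{matrixprop}: the peripheral subgroup at $C$ must be generated by $\begin{pmatrix}1&\mu\\0&1\end{pmatrix}$ and $\begin{pmatrix}1&\lambda\\0&1\end{pmatrix}$ with the claimed values of $\mu$ and $\lambda$ in each of the three cases.
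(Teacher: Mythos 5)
Your proposal follows essentially the same route as the paper's proof: both identify the cusp cross-section of $C$ with the rectangle formed by the four faces (two shaded, two white) meeting at the ideal vertex for $C$, read off the shaded side length $2\omega$ from the thrice punctured sphere computation of Theorem \ref{tps} and the meridian length $1$ from the convention of Lemma \ref{reverseorientation}, and obtain the half-twist cases by tracking the shear in the regluing of the bowtie faces, exactly the verification via explicit fundamental domains that you flag as the remaining step. The only cosmetic difference is that the paper phrases the setup in terms of Purcell's circle packing with the vertex $p$ sent to $\infty$, which is the same rectangle you describe as the link of the $4$-valent ideal vertex.
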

\begin{remark}
The FAL complements with RH half-twist and LH half-twist are isometric as a RH half-twist can be changed to a
LH half-twist in presence of a crossing circle by adding a full twist, which is a homeomorphism. However the canonical longitude for
the crossing circle is different in each case, thus we get a different cusp shape.
\end{remark}

\begin{proof} We will determine the longitude and
  meridian curves in the fundamental domain for the given crossing
  circle.  Let $L$ be a FAL and $C$ be a crossing circle.  Let $S^3-L = P_1 \cup P_2$, where $P_1$ and $P_2$ are isometric to the right angled polyhedron $P_L$ described in Proposition \ref{PLprop}. The twice punctured disc bounded by $C$ becomes a bowtie on $P_L$ and the ideal point corresponding to $C$ is the center of the bowtie. Let $p$ denote the ideal point corresponding to $C$, since the faces of $P_L$ are geodesic, they lie on hyperbolic planes, which are determined by circles or lines on $\mathbb{C} \cup \infty$.  The four faces incident to $p$ are two white faces and two shaded faces. Correspondingly we have two tangent circles in the white circle packing, and two tangent circles in the dual shaded circle packing, see Figure \ref{cp}(b). 
\begin{figure}

\centering
\definecolor{linkcolor0}{rgb}{0.85, 0.15, 0.15}
\definecolor{linkcolor1}{rgb}{0.15, 0.15, 0.85}
\definecolor{linkcolor2}{rgb}{0.15, 0.85, 0.15}
\begin{tikzpicture}[scale=.2,line width=2, line cap=round, line join=round]
  \begin{scope}[color=linkcolor0]
    \draw (2.62, 6.77) .. controls (0.75, 6.78) and (0.35, 9.16) .. 
          (0.40, 11.36) .. controls (0.46, 13.45) and (0.52, 15.93) .. (2.11, 15.92);
    \draw (3.03, 15.91) .. controls (4.18, 15.91) and (5.34, 15.90) .. (6.49, 15.89);
    \draw (7.48, 15.88) .. controls (9.35, 15.87) and (9.45, 13.43) .. 
          (9.52, 11.30) .. controls (9.59, 9.08) and (9.04, 6.73) .. (7.15, 6.74);
    \draw (7.15, 6.74) .. controls (5.64, 6.75) and (4.13, 6.76) .. (2.62, 6.77);
  \end{scope}
  \begin{scope}[color=linkcolor1]
    \draw (2.47, 18.92) .. controls (2.49, 17.92) and (2.50, 16.92) .. (2.51, 15.92);
    \draw (2.51, 15.92) .. controls (2.55, 13.04) and (2.58, 10.16) .. (2.62, 7.29);
    \draw (2.63, 6.25) .. controls (2.65, 4.28) and (2.68, 2.31) .. (2.70, 0.35);
  \end{scope}
  \begin{scope}[color=linkcolor2]
    \draw (6.96, 19.21) .. controls (6.98, 18.10) and (6.99, 16.99) .. (7.01, 15.89);
    \draw (7.01, 15.89) .. controls (7.05, 13.01) and (7.10, 10.14) .. (7.14, 7.26);
    \draw (7.16, 6.23) .. controls (7.19, 4.28) and (7.22, 2.34) .. (7.25, 0.40);
  \end{scope}
  \node at (5,6) {\large$p$};
  
  \node at (3.4,3) {\large$q$};
   \node at (8,3) {\large$r$};
  
  \node at (8,18) {\large$s$};
  \node at (3.4,18) {\large$t$};
\end{tikzpicture}
\hspace{1cm}
\begin{tikzpicture}[scale=2]
\draw[black, thick](5,5.5)circle[radius=0.5];
\node at (4.8,5.5) {$2$};
\node at (6.2,5.5) {$1$};
\node at (5.5,6.2) {$3$};
\node at (5.5,4.8) {$4$};

\draw[black, thick](6,5.5) circle [radius=0.5];
\draw[black, thick, dashed] (5.5,6) circle [radius=0.5];
\draw[black, thick, dashed] (5.5,5) circle [radius=0.5];
\draw[fill,red!40](5.5,5.5) circle[radius=0.1];
\draw[fill,blue!40](5,6) circle[radius=0.1];
\draw[fill,green!40](6,6) circle[radius=0.1];
\draw[fill,blue!40](5,5) circle[radius=0.1];
\draw[fill,green!40](6,5) circle[radius=0.1];
\node at (5.5,5.5){\bf\large $p$};
\node at (5,6){\bf\large $t$};
\node at (5,5){\bf\large $q$};
\node at (6,6){\bf\large $s$};
\node at (6,5){\bf\large $r$};
\end{tikzpicture}
\hspace{1cm}
\begin{tikzpicture}[scale=2]

\node at (4.8,5.5) {$3$};
\node at (6.7,5.5) {$4$};
\node at (5.75,6.2) {$2$};
\node at (5.75,4.8) {$1$};

\draw [black, thick] (5,6) -- (6.5,6);
\draw [black, thick] (5,5) -- (6.5,5);
\draw [black, thick, dashed] (5,5) -- (5,6);

\draw [black, thick, dashed] (6.5,5) -- (6.5,6);

\draw[fill,blue!40](5,6) circle[radius=0.1];
\draw[fill,blue!40](6.5,6) circle[radius=0.1];
\draw[fill,green!40](5,5) circle[radius=0.1];
\draw[fill,green!40](6.5,5) circle[radius=0.1];
\node at (5,6){\bf\large $t$};
\node at (5,5){\bf\large $s$};
\node at (6.5,6){\bf\large $q$};
\node at (6.5,5){\bf\large $r$};
\end{tikzpicture}
\hfill (a) \hfill \hfill \hfill (b) \hfill \hfill \hfill (c) \hfill \hfill \hfill 

\caption[TPS circle packing at $\infty$]{(a) Thrice punctured sphere without half-twist. (b) Solid circles representing the white faces and dashed circles representing the shaded faces at an ideal point arising from a crossing circle.  (c) The rectangle formed by taking $p$ to $\infty$.}
\label{cp}
\end{figure}
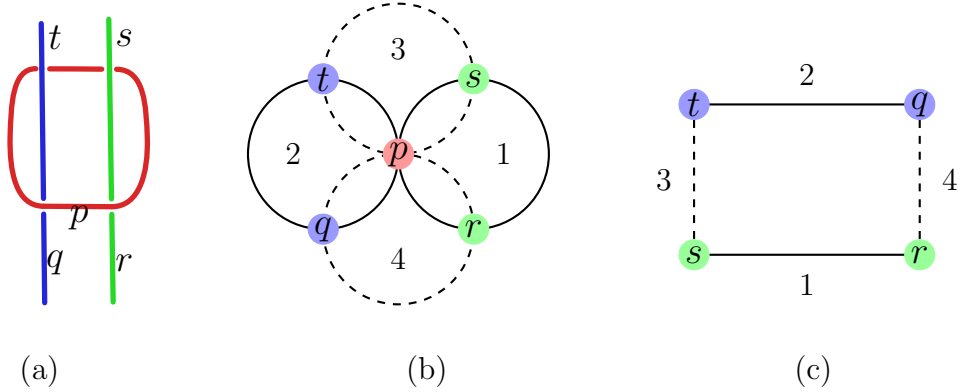

 Superimposing the two circle packings, 
 and taking the point $p$ to $\infty$, the four circles tangent to $p$ become lines that form the rectangle of the cusp on each polyhedron $P_1$ and $P_2$. Let $H_\infty$ denote the horizontal plane corresponding to the horosphere centered at $p$.  
 See Figure \ref{cp} (c).
 All other circles lie inside this rectangle, since the circles are at most tangent to one another and do not overlap.  To find the cusp shape we need to study the fundamental domain of the cusp. 
 
 Case 1: Purcell showed that for FAL without a half-twist present, the fundamental domain for the cusp torus for $C$ is formed by two rectangles attached along a white edge (representing a white face). 
 
 Let's describe the longitude and meridian curves along the crossing circle component of the FAL. Let $s',r',t',q'$ denote the points on $H_\infty$ that are directly above $s,r,t,q$ respectively, translated along respective crossing geodesics $\omega_i$. See Figure \ref{cp}(b). The fundamental
domain is formed by
taking two copies
of the rectangle
$s'r'q't'$ glued along
the edge $s'r'$. The lift of the meridian is $s'r'$, and
the longitude is
double the curve $s't'$.
 
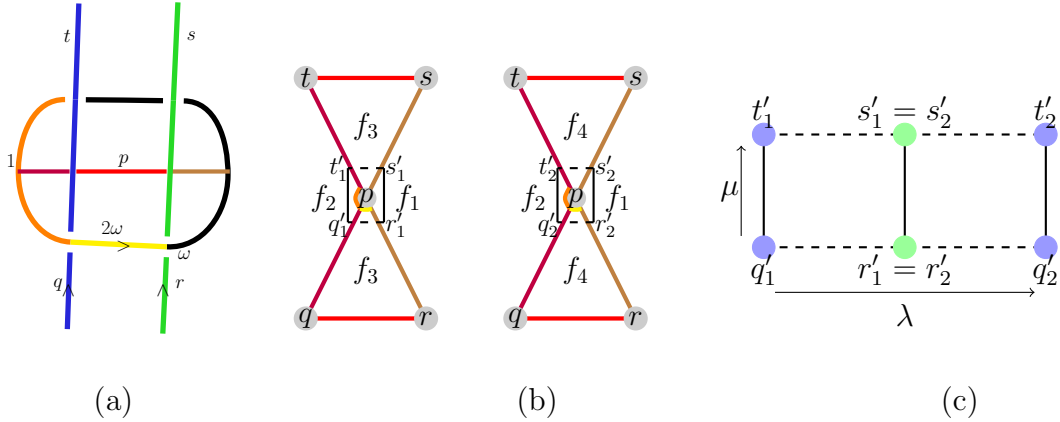
\begin{figure}

\definecolor{linkcolor0}{rgb}{0.85, 0.15, 0.15}
\definecolor{linkcolor1}{rgb}{0.15, 0.15, 0.85}
\definecolor{linkcolor2}{rgb}{0.15, 0.85, 0.15}
\begin{tikzpicture}[thick,scale=0.3, line width = 2, every node/.style={transform shape}][ line cap=round, line join=round]
    \draw (6.91, 4.16) .. controls (8.49, 4.08) and (9.60, 5.62) .. 
          (9.62, 7.32) .. controls (9.63, 8.94) and (9.07, 10.62) .. (7.66, 10.63);
    \draw (6.74, 10.64) .. controls (5.60, 10.66) and (4.46, 10.67) .. (3.32, 10.68);
    \draw[orange] (2.40, 10.69) .. controls (1.03, 10.71) and (0.37, 9.15) .. 
          (0.33, 7.60) .. controls (0.29, 5.99) and (1.15, 4.45) .. (2.64, 4.37);
    \draw[yellow] (2.64, 4.37) .. controls (4.06, 4.30) and (5.48, 4.23) .. (6.91, 4.16);
 \draw[ultra thick, purple] (0.3,7.5) -- (2.6,7.5);
 \draw[ultra thick, red] (2.9,7.5) -- (6.9,7.5) ;
 \draw[ultra thick, brown] (7.1,7.5) -- (9.7,7.5) ;
  \begin{scope}[color=linkcolor1]
    \draw (3.01, 14.95) .. controls (2.96, 13.53) and (2.91, 12.11) .. (2.86, 10.69);
    \draw (2.86, 10.69) .. controls (2.79, 8.73) and (2.72, 6.78) .. (2.65, 4.83);
    \draw (2.62, 3.91) .. controls (2.58, 2.78) and (2.54, 1.64) .. (2.50, 0.51);
  \end{scope}
  \begin{scope}[color=linkcolor2]
    \draw (7.40, 15.00) .. controls (7.33, 13.55) and (7.27, 12.09) .. (7.20, 10.64);
    \draw (7.20, 10.64) .. controls (7.11, 8.63) and (7.02, 6.63) .. (6.93, 4.62);
    \draw (6.89, 3.70) .. controls (6.84, 2.57) and (6.79, 1.44) .. (6.73, 0.31);

  \end{scope}
   \node at (5,4.2) {\huge\textgreater};
    
   \node at (2.55,2.2) {\huge$\wedge$}; 
  \node at (6.75,2.2) {\huge$\wedge$};
      \node at (7.7,3.8) {\huge$\omega$}; 
  \node at (4.5,5) {\huge$2\omega$};
  \node at (0,8) {\huge$1$};
    \node at (5,8) {\huge$p$};
  
  \node at (2.1,2.5) {\huge$q$};
   \node at (7.5,2.5) {\huge$r$};
  
  \node at (8,13.5) {\huge$s$};
  \node at (2.5,13.5) {\huge$t$};
\end{tikzpicture}
\hspace{.501cm}
\begin{tikzpicture}[thick,scale=.8, every node/.style={transform shape}]
 \draw [purple, ultra thick] (0,0) -- (1,2);
 \draw[red, ultra thick] (0,0) -- (2,0);
 \draw[brown, ultra thick] (2,0) -- (1,2);

 \draw[purple, ultra thick] (1,2) -- (0,4);
 \draw[brown, ultra thick] (1,2) -- (2,4);
\draw[red, ultra thick] (0,4) -- (2,4);

  \fill [black!20] (0,0) circle (.2cm);
 \fill [black!20] (2,0) circle (.2cm);
 \fill [black!20] (0,4) circle (.2cm);
 \fill [black!20] (2,4) circle (.2cm);
 \fill [black!20] (1,2) circle (.175cm);
 \draw[orange, ultra thick](.9,2.175) ..controls (.8,2) .. (.9,1.825);
\draw[] (.7,2.5) -- (.7,1.6); 
 \draw[] (1.3,2.5) -- (1.3,1.6); 
 \draw[dashed](1.3,2.5)--(.7,2.5);
 \draw[dashed](1.3,1.6)--(.7,1.6);
 
 \draw[yellow, ultra thick](.9,1.825) .. controls (1,1.8) .. (1.1,1.825);
 \node at 
 (1,2){\bf\large $p$};
\node at (0,4){\bf\large $t$};
\node at (0,0){\bf\large $q$};
\node at (2,4){\bf\large $s$};
\node at (2,0){\bf\large $r$};
\node at (1,3.2){\bf\large $f_3$};
\node at (1.7,2){\bf\large $f_1$};
\node at (0.3,2){\bf\large $f_2$};
\node at (1,.8){\bf\large $f_3$};
\node at (.55,1.56) {$q_1'$};
\node at (1.5,1.56) {$r_1'$};
\node at (.55,2.5) {$t_1'$};
\node at (1.5,2.5) {$s_1'$};
\end{tikzpicture}
\hspace{.501cm}
\begin{tikzpicture}[thick,scale=.8, every node/.style={transform shape}]
 \draw [purple, ultra thick] (0,0) -- (1,2);
 \draw[red, ultra thick] (0,0) -- (2,0);
 \draw[brown, ultra thick] (2,0) -- (1,2);

 \draw[purple, ultra thick] (1,2) -- (0,4);
 \draw[brown, ultra thick] (1,2) -- (2,4);
\draw[red, ultra thick] (0,4) -- (2,4);

  \fill [black!20] (0,0) circle (.2cm);
 \fill [black!20] (2,0) circle (.2cm);
 \fill [black!20] (0,4) circle (.2cm);
 \fill [black!20] (2,4) circle (.2cm);
 \fill [black!20] (1,2) circle (.175cm);
 \draw[orange, ultra thick](.9,2.175) ..controls (.8,2) .. (.9,1.825);
\draw[] (.7,2.5) -- (.7,1.6); 
 \draw[] (1.3,2.5) -- (1.3,1.6); 
 \draw[dashed](1.3,2.5)--(.7,2.5);
 \draw[dashed](1.3,1.6)--(.7,1.6);
 
 \draw[yellow, ultra thick](.9,1.825) .. controls (1,1.8) .. (1.1,1.825);
 \node at 
 (1,2){\bf\large $p$};
\node at (0,4){\bf\large $t$};
\node at (0,0){\bf\large $q$};
\node at (2,4){\bf\large $s$};
\node at (2,0){\bf\large $r$};
\node at (1,3.2){\bf\large $f_4$};
\node at (1.7,2){\bf\large $f_1$};
\node at (0.3,2){\bf\large $f_2$};
\node at (1,.8){\bf\large $f_4$};
\node at (.55,1.56) {$q_2'$};
\node at (1.5,1.56) {$r_2'$};
\node at (.55,2.5) {$t_2'$};
\node at (1.5,2.5) {$s_2'$};
\end{tikzpicture}
\hspace{.51cm}
\begin{tikzpicture}[scale=1.5]

\node at (4.7,5.5) {$\mu$};
\node at (6.25,4.4) {$\lambda$};

\draw [black, thick, dashed] (5,6) -- (7.5,6);
\draw [black, thick, dashed] (5,5) -- (7.5,5);
\draw [black, thick] (5,5) -- (5,6);
\draw [black, thick] (6.25,5) -- (6.25,6);
\draw [black, thick] (7.5,5) -- (7.5,6);

\draw[fill,blue!40](5,6) circle[radius=0.1];
\draw[fill,green!40](6.25,6) circle[radius=0.1];
\draw[fill,blue!40](5,5) circle[radius=0.1];
\draw[fill,green!40](6.25,5) circle[radius=0.1];




\draw[fill,blue!40](7.5,5) circle[radius=0.1];
\draw[fill,blue!40](7.5,6) circle[radius=0.1];

\node at (6.25, 6.2) {$s_1' = s_2'$};
\node at (6.25, 4.8) {$r_1' = r_2'$};
\node at (5,4.8) {$q_1'$};
\node at (5,6.2) {$t_1'$};
\node at (7.5,4.8) {$q_2'$};
\node at (7.5,6.2) {$t_2'$};
\draw[thin, ->](5.1,4.6) -- (7.4,4.6);
\draw[thin, ->](4.8,5.1) -- (4.8,5.9);

\end{tikzpicture}
\centering
 (a) \quad \quad \quad \quad \quad \quad \quad \quad  \quad \quad \quad \quad (b) \quad \quad \quad \quad \quad  \quad \quad \quad \quad \quad \quad \quad (c) 
\caption[Tracking $\mu$ and $\lambda$]{(a) Finding $\lambda$ and $\mu$ for the cusp coming from the crossing circle on the diagram. (b) Cusp view on bowtie. (c) Fundamental domain for $C$.}
\label{patinf}
\end{figure}

From the computations of the thrice punctured sphere in Section 3.1 edge parameter $u_1$ is isometric to geodesic $s't'$ which is the translation parameter along the horoball at hand and is $2\omega$. Since it is double in the actual cusp, the longitude parameter is $4\omega$. The translation parameter $s'r'$ is isometric to the meridian and is 1.  Therefore, the cusp shape $\displaystyle{\frac{\lambda}{\mu} = \frac{4\omega}{1}}$, see Figure \ref{patinf}.

 Case 2: for FAL with half-twists present, i.e. the crossing circle cusps that bound a half-twist will be tiled by rectangles but the fundamental domain will be a parallelogram due to a shear in the universal cover, its longitude curve will run along the shaded face (same as the case without a half-twist present) which is $4\omega$. The meridian curve will run diagonally across since it takes one step along a white face and one step along a shaded face. This is due to a twist in the gluing of the shaded faces. 
 $s_2'$ will be identified with $q_1'$, and $t_2'$ will not be identified with $q_2'$. There are two cases:
 The twist goes with a RH half-twist, see Figure \ref{tpsrhs'}(a),  where
 the meridian goes diagonal increasing from left to right, thus it's $1+2\omega$ so the cusp shape is $ \displaystyle{\frac{4\omega}{1+2\omega}}$. When the twist goes with the LH half-twist, see Figure \ref{tpslhs'}(b), the diagonal is decreasing from left to right, it goes one step down which is $-2\omega$ and one step across which is $1$, thus it's $1-2\omega$ and the cusp shape is $ \displaystyle{\frac{4\omega}{1-2\omega}}$. \end{proof}
 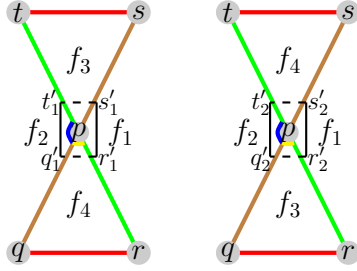
\begin{figure}
     \centering
     \hspace{.501cm}
\begin{tikzpicture}[thick,scale=.8, every node/.style={transform shape}]
 \draw [brown, ultra thick] (0,0) -- (1,2);
 \draw[red, ultra thick] (0,0) -- (2,0);
 \draw[green, ultra thick] (2,0) -- (1,2);

 \draw[green, ultra thick] (1,2) -- (0,4);
 \draw[brown, ultra thick] (1,2) -- (2,4);
\draw[red, ultra thick] (0,4) -- (2,4);

  \fill [black!20] (0,0) circle (.2cm);
 \fill [black!20] (2,0) circle (.2cm);
 \fill [black!20] (0,4) circle (.2cm);
 \fill [black!20] (2,4) circle (.2cm);
 \fill [black!20] (1,2) circle (.175cm);
 \draw[blue, ultra thick](.9,2.175) ..controls (.8,2) .. (.9,1.825);
\draw[] (.7,2.5) -- (.7,1.6); 
 \draw[] (1.3,2.5) -- (1.3,1.6); 
 \draw[dashed](1.3,2.5)--(.7,2.5);
 \draw[dashed](1.3,1.6)--(.7,1.6);
 
 \draw[yellow, ultra thick](.9,1.825) .. controls (1,1.8) .. (1.1,1.825);
 \node at 
 (1,2){\bf\large $p$};
\node at (0,4){\bf\large $t$};
\node at (0,0){\bf\large $q$};
\node at (2,4){\bf\large $s$};
\node at (2,0){\bf\large $r$};
\node at (1,3.2){\bf\large $f_3$};
\node at (1.7,2){\bf\large $f_1$};
\node at (0.3,2){\bf\large $f_2$};
\node at (1,.8){\bf\large $f_4$};
\node at (.55,1.56) {$q_1'$};
\node at (1.5,1.56) {$r_1'$};
\node at (.55,2.5) {$t_1'$};
\node at (1.5,2.5) {$s_1'$};
\end{tikzpicture}
\hspace{.501cm}
\begin{tikzpicture}[thick,scale=.8, every node/.style={transform shape}]
 \draw [brown, ultra thick] (0,0) -- (1,2);
 \draw[red, ultra thick] (0,0) -- (2,0);
 \draw[green, ultra thick] (2,0) -- (1,2);

 \draw[green, ultra thick] (1,2) -- (0,4);
 \draw[brown, ultra thick] (1,2) -- (2,4);
\draw[red, ultra thick] (0,4) -- (2,4);

  \fill [black!20] (0,0) circle (.2cm);
 \fill [black!20] (2,0) circle (.2cm);
 \fill [black!20] (0,4) circle (.2cm);
 \fill [black!20] (2,4) circle (.2cm);
 \fill [black!20] (1,2) circle (.175cm);
 \draw[blue, ultra thick](.9,2.175) ..controls (.8,2) .. (.9,1.825);
\draw[] (.7,2.5) -- (.7,1.6); 
 \draw[] (1.3,2.5) -- (1.3,1.6); 
 \draw[dashed](1.3,2.5)--(.7,2.5);
 \draw[dashed](1.3,1.6)--(.7,1.6);
 
 \draw[yellow, ultra thick](.9,1.825) .. controls (1,1.8) .. (1.1,1.825);
 \node at 
 (1,2){\bf\large $p$};
\node at (0,4){\bf\large $t$};
\node at (0,0){\bf\large $q$};
\node at (2,4){\bf\large $s$};
\node at (2,0){\bf\large $r$};
\node at (1,3.2){\bf\large $f_4$};
\node at (1.7,2){\bf\large $f_1$};
\node at (0.3,2){\bf\large $f_2$};
\node at (1,.8){\bf\large $f_3$};
\node at (.55,1.56) {$q_2'$};
\node at (1.5,1.56) {$r_2'$};
\node at (.55,2.5) {$t_2'$};
\node at (1.5,2.5) {$s_2'$};
\end{tikzpicture}
     \caption{Gluing in Half-twist}
     \label{fig:my_label}
 \end{figure} 
 \begin{figure}
 \centering
 \definecolor{linkcolor0}{rgb}{0.85, 0.15, 0.15}
\definecolor{linkcolor1}{rgb}{0.15, 0.15, 0.85}
\definecolor{linkcolor2}{rgb}{0.15, 0.85, 0.15}
\begin{tikzpicture}[scale=.2][line width=9.5, line cap=round, line join=round]
  \begin{scope}[color=linkcolor0]
    \draw (2.56, 15.83) .. controls (1.00, 15.88) and (0.70, 13.88) .. 
          (0.59, 12.08) .. controls (0.46, 10.21) and (1.09, 8.28) .. (2.69, 8.30);
    \draw (2.69, 8.30) .. controls (4.02, 8.31) and (5.35, 8.32) .. (6.69, 8.34);
    \draw (6.69, 8.34) .. controls (8.39, 8.35) and (9.38, 10.14) .. 
          (9.40, 12.00) .. controls (9.43, 13.75) and (9.08, 15.64) .. (7.57, 15.69);
    \draw (6.35, 15.72) .. controls (5.48, 15.75) and (4.61, 15.77) .. (3.75, 15.80);
  \end{scope}
  \begin{scope}[color=linkcolor1]
    \draw (8.61, 1.42) .. controls (7.46, 2.30) and (6.32, 3.17) .. (5.17, 4.05);
    \draw (5.17, 4.05) .. controls (3.88, 5.03) and (2.60, 6.24) .. (2.67, 7.84);
    \draw (2.72, 9.05) .. controls (2.81, 11.31) and (2.90, 13.56) .. (3.00, 15.82);
    \draw (3.00, 15.82) .. controls (3.06, 17.29) and (3.12, 18.76) .. (3.18, 20.23);
  \end{scope}
  \begin{scope}[color=linkcolor1]
    \draw (7.36, 20.23) .. controls (7.27, 18.72) and (7.19, 17.21) .. (7.10, 15.70);
    \draw (7.10, 15.70) .. controls (6.98, 13.50) and (6.85, 11.29) .. (6.73, 9.09);
    \draw (6.65, 7.74) .. controls (6.58, 6.48) and (6.35, 5.18) .. (5.44, 4.31);
    \draw (4.63, 3.52) .. controls (3.59, 2.52) and (2.55, 1.51) .. (1.50, 0.50);
  \end{scope}
\end{tikzpicture}
\hspace{1cm}
\begin{tikzpicture}[scale=1.5]
\draw[black, thick] (5,5) -- (6.25,6);

\draw[black, thick] (7.5,5) -- (8.75,6);


\node at (5.5,5.6) {$\mu$};
\node at (6.25,4.4) {$\lambda$};

\draw [black, thick, dashed] (5,6) -- (8.75,6);
\draw [black, thick, dashed] (5,5) -- (8.75,5);
\draw [black,] (8.75,5) -- (8.75,6);
\draw [black] (5,5) -- (5,6);
\draw [black] (6.25,5) -- (6.25,6);
\draw [black, ] (7.5,5) -- (7.5,6);
\draw[fill,blue!40](8.75,6) circle[radius=0.1];
\draw[fill,blue!40](8.75,5) circle[radius=0.1];
\draw[fill,blue!40](5,6) circle[radius=0.1];
\draw[fill,blue!40](6.25,6) circle[radius=0.1];
\draw[fill,blue!40](5,5) circle[radius=0.1];
\draw[fill,blue!40](6.25,5) circle[radius=0.1];




\draw[fill,blue!40](7.5,5) circle[radius=0.1];
\draw[fill,blue!40](7.5,6) circle[radius=0.1];

\node at (6.25, 6.2) {$s_1' = s_2'$};
\node at (6.25, 4.8) {$r_1' = r_2'$};
\node at (5,4.8) {$q_1'$};
\node at (7.5,4.8) {$q_2'$};
\node at (7.5,6.2) {$t_2'$};
\draw[thin, ->](5.1,4.6) -- (7.4,4.6);
\draw[thin, ->](5.3,5.1) -- (6.18,5.8);
\node at (8.75, 6.2) {$s_2'$};

\end{tikzpicture}














\hfill (a) \hfill \hfill (b) \hfill \hfill \hfill
 \caption[TPS RH half-twist Fundamental Domain]{(a) TPS with RH half-twist. (b) The corresponding fundamental domain of the cusp due to the RH half-twist.}
 \label{tpsrhs'}
 \end{figure}

\begin{figure}
\centering
 \definecolor{linkcolor0}{rgb}{0.85, 0.15, 0.15}
\definecolor{linkcolor1}{rgb}{0.15, 0.15, 0.85}
\definecolor{linkcolor2}{rgb}{0.15, 0.85, 0.15}
\begin{tikzpicture}[scale=.2][line width=9.5, line cap=round, line join=round]
  \begin{scope}[color=linkcolor0]
    \draw (2.56, 15.83) .. controls (1.00, 15.88) and (0.70, 13.88) .. 
          (0.59, 12.08) .. controls (0.46, 10.21) and (1.09, 8.28) .. (2.69, 8.30);
    \draw (2.69, 8.30) .. controls (4.02, 8.31) and (5.35, 8.32) .. (6.69, 8.34);
    \draw (6.69, 8.34) .. controls (8.39, 8.35) and (9.38, 10.14) .. 
          (9.40, 12.00) .. controls (9.43, 13.75) and (9.08, 15.64) .. (7.57, 15.69);
    \draw (6.35, 15.72) .. controls (5.48, 15.75) and (4.61, 15.77) .. (3.75, 15.80);
  \end{scope}
  \begin{scope}[color=linkcolor1]
    \draw (8.61, 1.42) .. controls (7.66, 2.14) and (6.72, 2.87) .. (5.77, 3.59);
    \draw (4.66, 4.44) .. controls (3.54, 5.29) and (2.61, 6.46) .. (2.67, 7.84);
    \draw (2.72, 9.05) .. controls (2.81, 11.31) and (2.90, 13.56) .. (3.00, 15.82);
    \draw (3.00, 15.82) .. controls (3.06, 17.29) and (3.12, 18.76) .. (3.18, 20.23);
  \end{scope}
  \begin{scope}[color=linkcolor1]
    \draw (7.36, 20.23) .. controls (7.27, 18.72) and (7.19, 17.21) .. (7.10, 15.70);
    \draw (7.10, 15.70) .. controls (6.98, 13.50) and (6.85, 11.29) .. (6.73, 9.09);
    \draw (6.65, 7.74) .. controls (6.58, 6.37) and (6.16, 5.00) .. (5.17, 4.05);
    \draw (5.17, 4.05) .. controls (3.95, 2.87) and (2.73, 1.68) .. (1.50, 0.50);
  \end{scope}
\end{tikzpicture}
\hspace{1cm}
\begin{tikzpicture}[scale=1.5]
\draw[black, thick] (5,6) -- (6.25,5);

\draw[black,  thick] (7.5,6) -- (8.75,5);


\node at (5.35,5.5) {$\mu$};
\node at (7.5,4.4) {$\lambda$};

\draw [black, thick, dashed] (5,6) -- (8.75,6);
\draw [black, thick, dashed] (5,5) -- (8.75,5);
\draw [black, ] (8.75,5) -- (8.75,6);
\draw [black,] (5,5) -- (5,6);
\draw [black] (6.25,5) -- (6.25,6);
\draw [black] (7.5,5) -- (7.5,6);
\draw[fill,blue!40](8.75,6) circle[radius=0.1];
\draw[fill,blue!40](8.75,5) circle[radius=0.1];
\draw[fill,blue!40](5,6) circle[radius=0.1];
\draw[fill,blue!40](6.25,6) circle[radius=0.1];
\draw[fill,blue!40](5,5) circle[radius=0.1];
\draw[fill,blue!40](6.25,5) circle[radius=0.1];




\draw[fill,blue!40](7.5,5) circle[radius=0.1];
\draw[fill,blue!40](7.5,6) circle[radius=0.1];


\node at (5, 6.2) {$ s_2'$};
\node at (6.25,4.8) {$q_1'$};
\node at (6.25,6.2) {$t_1'$};
\node at (8.75,4.8) {$q_2'$};
\node at (7.5,6.2) {$s_1' = s_2'$};
\node at (7.5, 4.8) {$r_1'=r_2'$};
\draw[thin, ->](6.26,4.6) -- (8.74,4.6);
\draw[thin, ->](6.22,5.2) -- (5.4,5.85);

\end{tikzpicture}
\hfill (a) \hfill \hfill (b) \hfill \hfill \hfill
 \caption[TPS LH half-twist Fundamental Domain]{(a) TPS with LH half-twist. (b) The corresponding fundamental domain of the cusp due to the LH half-twist.}
 \label{tpslhs'}
 \end{figure}
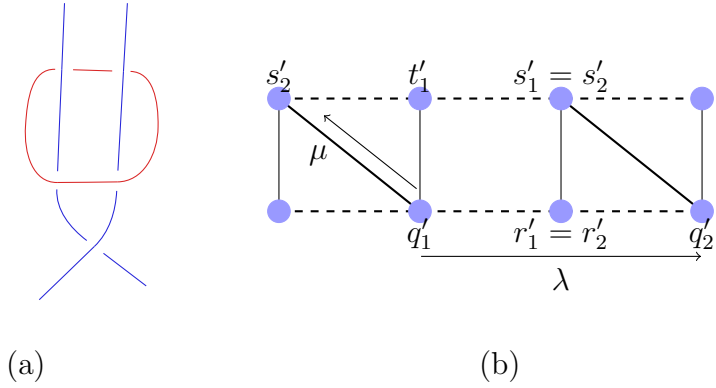

Note that the imaginary part of the cusp shape will always be positive.  

The cusp shapes for the strands in the projection plane can also be determined from the labels in the diagram.
\begin{theorem}\label{mainthm2}

    For a FAL without a half-twist, the cusp shape for a component in the projection plane will be the sum of the edge labels as one goes around the strand.
\end{theorem}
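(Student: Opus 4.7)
The strategy is to directly identify the longitude translation on the cusp torus of $K$ with the sum of the edge labels encountered as one traverses $K$ once.

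First, fix a projection-plane component $K$ of $L$ and lift its cusp to a horosphere $H \subset \HH^3$. By the normalization of meridians to length $1$, the peripheral subgroup acts on $H$ by Euclidean translations: the meridian is translation by $\mu = 1$ and the longitude by some $\lambda \in \mathbb{C}$. The cusp shape is $\lambda/\mu = \lambda$, so it suffices to show that $\lambda$ equals the sum of the edge labels along $K$.

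Second, traverse $K$ according to its orientation and list, in cyclic order, the crossings $c_1, \ldots, c_n$ at which $K$ meets crossing circles, together with the edges $e_1, \ldots, e_n$ (with edge labels $u_1, \ldots, u_n$) so that $e_i$ runs from $c_{i-1}$ to $c_i$ (indices mod $n$). Each crossing arc at $c_i$ lifts to a geodesic whose endpoint on $H$ is a distinguished point $P_i$. By the very definition of the translational parameter in the T-T method (Section 2.3), the horospherical translation from $P_{i-1}$ to $P_i$, taken in the direction of the orientation of $K$, is exactly $u_i$.

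Third, composing these translations as one travels once around $K$ produces the net translation $\sum_{i=1}^n u_i$ on $H$. Since this loop is a lift of the longitude of the cusp torus, we conclude $\lambda = \sum_i u_i$, and hence the cusp shape equals the sum of the edge labels along $K$. The no-half-twist hypothesis is used through Lemma \ref{edge}, which asserts that the label on an edge bordering $K$ is intrinsic to the edge (the same from either adjacent white face), so the sum is unambiguous.

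\textbf{Main obstacle.} The principal technical point is justifying that the landing points $P_{i-1}, P_i$ of consecutive crossing arcs really are related on $H$ by the edge-label translation $u_i$, so that the composition around the strand equals the algebraic sum $\sum u_i$. This relies on the T-T interpretation of $u_i$ as a horospherical translation and, crucially, on the no-half-twist assumption: with half-twists present, the gluing of the polyhedra $P_1, P_2$ introduces a shear (as seen in the proof of Theorem \ref{mainthm}), and the straightforward sum of edge labels would no longer represent the longitude. Under the hypothesis of Theorem \ref{mainthm2}, Lemma \ref{edge} rules out this shear, making the sum identification with $\lambda$ valid.
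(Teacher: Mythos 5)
Your proposal is correct and follows essentially the same route as the paper: both identify the longitude of a projection-plane cusp with the concatenation of the edge-label translations between consecutive crossing geodesics, with the meridian normalized to length $1$ (the paper phrases this via Purcell's tiling of the cusp torus by pairs of rectangles, one from each polyhedron, glued along white and shaded edges). One small correction: the absence of shear in the no-half-twist case comes from the bowties of the two polyhedra being glued directly to one another (as in Case 1 of the proof of Theorem \ref{mainthm}), not from Lemma \ref{edge}, which only guarantees that an edge label is the same whether computed from either adjacent white face.
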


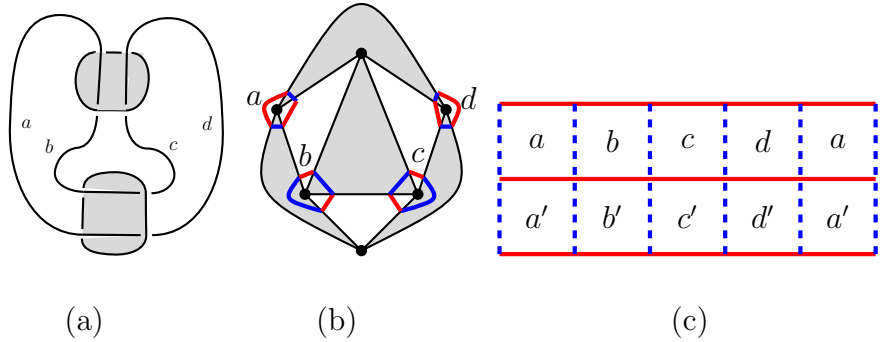
\begin{figure}
\centering
\definecolor{linkcolor0}{rgb}{0.85, 0.15, 0.15}
\definecolor{linkcolor1}{rgb}{0.15, 0.15, 0.85}
\definecolor{linkcolor2}{rgb}{0.15, 0.85, 0.15}
\begin{tikzpicture}[thick,scale=0.3, every node/.style={transform shape}][line width=3.2, line cap=round, line join=round]
   \fill[gray!30]
    (4.07, 9.16) .. controls (3.49, 9.17) and (3.17, 8.54) .. 
          (3.17, 7.89) .. controls (3.17, 7.23) and (3.53, 6.60) .. (4.13, 6.59);
       \fill [gray!30] (3.51, 3) --(3.47, 1)-- (6.3,1)--(6.28,3);
    \fill[gray!30]
    (6.28, 2.92) .. controls (6.27, 3.53) and (5.59, 3.85) .. 
          (4.90, 3.84) .. controls (4.25, 3.84) and (3.54, 3.71) .. (3.52, 3.14);
       \fill [gray!30] (5.40, 6.58) .. controls (5.99, 6.58) and (6.35, 7.20) .. 
          (6.40, 7.85) .. controls (6.46, 8.48) and (6.21, 9.12) .. (5.65, 9.13);

 \fill[gray!30]
 (3.45, 0.90) .. controls (3.43, 0.31) and (4.19, 0.21) .. 
          (4.87, 0.19) .. controls (5.59, 0.16) and (6.31, 0.43) .. (6.30, 1.05);
    
    \draw (4.07, 9.16) .. controls (3.49, 9.17) and (3.17, 8.54) .. 
          (3.17, 7.89) .. controls (3.17, 7.23) and (3.53, 6.60) .. (4.13, 6.59);
    \draw  (4.13, 6.59) .. controls (4.55, 6.59) and (4.98, 6.59) .. (5.40, 6.58);
    \fill [gray!30](4.13, 6.59) .. controls (4.55, 6.59) and (4.98, 6.59) .. (5.40, 6.58)--(5.5,8) -- (6.21, 9.14) .. controls (4.98, 9.15) and (4.75, 9.15) .. (4, 9.15);
    \draw (5.40, 6.58) .. controls (5.99, 6.58) and (6.35, 7.20) .. 
          (6.40, 7.85) .. controls (6.46, 8.48) and (6.21, 9.12) .. (5.65, 9.13);
    \draw (5.21, 9.14) .. controls (4.98, 9.15) and (4.75, 9.15) .. (4.52, 9.15);
   \node at (1,6) {\huge\bf$a$}; 
    \node at (2,5) {\huge\bf$b$}; 
    \node at (7.5,5) {\huge\bf$c$}; 
    \node at (9,6) {\huge\bf$d$};

    \draw (6.28, 2.92) .. controls (6.27, 3.53) and (5.59, 3.85) .. 
          (4.90, 3.84) .. controls (4.25, 3.84) and (3.54, 3.71) .. (3.52, 3.14);

    \draw (3.51, 2.72) .. controls (3.49, 2.25) and (3.48, 1.79) .. (3.47, 1.32);
    \draw  (3.45, 0.90) .. controls (3.43, 0.31) and (4.19, 0.21) .. 
          (4.87, 0.19) .. controls (5.59, 0.16) and (6.31, 0.43) .. (6.30, 1.05);
    \draw (6.30, 1.05) .. controls (6.29, 1.67) and (6.28, 2.29) .. (6.28, 2.92);
    \draw (4.29, 9.16) .. controls (4.36, 10.12) and (3.44, 10.84) .. 
          (2.42, 10.81) .. controls (0.43, 10.75) and (0.34, 8.17) .. 
          (0.26, 5.92) .. controls (0.17, 3.49) and (1.30, 1.08) .. (3.46, 1.07);
    \draw (3.46, 1.07) .. controls (4.32, 1.06) and (5.18, 1.06) .. (6.05, 1.05);
    \draw  (6.56, 1.05) .. controls (8.66, 1.04) and (9.76, 3.38) .. 
          (9.69, 5.75) .. controls (9.62, 7.95) and (9.48, 10.47) .. 
          (7.50, 10.66) .. controls (6.45, 10.77) and (5.45, 10.12) .. (5.44, 9.14);
    \draw (5.44, 9.14) .. controls (5.42, 8.37) and (5.41, 7.61) .. (5.40, 6.84);
    \draw (5.40, 6.33) .. controls (5.38, 5.61) and (5.71, 4.89) .. 
          (6.35, 4.89) .. controls (6.93, 4.89) and (7.41, 4.46) .. 
          (7.53, 3.89) .. controls (7.65, 3.35) and (7.14, 2.90) .. (6.53, 2.91);
    \draw (6.02, 2.92) .. controls (5.18, 2.94) and (4.35, 2.96) .. (3.52, 2.97);
    \draw (3.52, 2.97) .. controls (2.82, 2.99) and (2.15, 3.34) .. 
          (2.26, 3.92) .. controls (2.35, 4.43) and (2.72, 4.86) .. 
          (3.23, 4.92) .. controls (3.84, 4.98) and (4.07, 5.67) .. (4.12, 6.34);
    \draw (4.15, 6.85) .. controls (4.20, 7.62) and (4.25, 8.39) .. (4.29, 9.16);
\end{tikzpicture}
\begin{tikzpicture}[scale=.75]
\fill [gray!30] (1,1) -- (3,1) -- (2,3.5);

\fill [gray!30] (1,1) -- (.5,2.5) .. controls (0,1)..(2,0);

\fill [gray!30] (3,1) -- (3.5,2.5) .. controls (4,1)..(2,0);

\fill [gray!30] (.5,2.5) -- (2,3.5)--(3.5,2.5) .. controls (2,5)..(.5,2.5);

  \draw[  thick] (1,1)--(2,3.5);
  \draw[  thick] (.5,2.5)--(2,3.5);
 \fill (2,0) circle (.1cm);
  \fill (1,1) circle (.1cm);
  \fill (3,1) circle (.1cm);
  \fill (2,3.5) circle (.1cm);
   \fill (.5,2.5) circle (.1cm);
  \fill (3.5,2.5) circle (.1cm);
 \draw[ thick] (1,1)--(.5,2.5);

  \draw[  thick] (1,1)--(2,0);
 \draw[  thick] (3,1)--(2,0);
\draw[  thick] (1,1)--(3,1);
\draw[ thick] (3,1)--(3.5,2.5);
\draw[ thick] (2,3.5)--(3.5,2.5);
\draw[ thick] (3,1)--(2,3.5);
\draw[thick] (2,0) .. controls (0,1) .. (.5,2.5);
\draw[ thick] (.5,2.5) .. controls (2,5) ..(3.5,2.5) ;
\draw[thick] (3.5,2.5) .. controls (4,1) .. (2,0) ;

\node at (.1,2.7) {\bf$a$}; 
\draw[red, ultra thick]
(.4,2.2) ..controls (.2, 2.6) .. (.7,2.8);
\draw[blue, ultra thick] (.7,2.8) -- (.85, 2.65);
\draw[red, ultra thick] (.85,2.65) -- (.6,2.2);
\draw[blue, ultra thick](.6,2.2) -- (.4,2.2);
\node at (1,1.7) {\bf$b$};
\draw[blue, ultra thick]
(.9,1.3) ..controls (.6, .9) .. (1.3,.7);
\draw[red, ultra thick] (1.3,.7) -- (1.5,1);

\draw[blue, ultra thick]  (1.5,1)--(1.15,1.4);
\draw[red, ultra thick] (.9,1.3) -- (1.15,1.4);

\node at (3,1.7) {\bf$c$};
\draw[blue, ultra thick]
(3.1,1.3) ..controls (3.4, .9) .. (2.7,.7);
\draw[red, ultra thick] (2.7,.7) -- (2.5,1);

\draw[blue, ultra thick]  (2.5,1)--(2.85,1.4);
\draw[red, ultra thick] (3.1,1.3) -- (2.85,1.4);

\node at (3.9,2.7) {\bf$d$}; 
\draw[red, ultra thick]
(3.6,2.2) ..controls (3.8, 2.6) .. (3.35,2.8);
\draw[blue, ultra thick] (3.35,2.8) -- (3.3, 2.65);
\draw[red, ultra thick] (3.3,2.65) -- (3.4,2.2);
\draw[blue, ultra thick](3.4,2.2) -- (3.6,2.2);


\end{tikzpicture}
\begin{tikzpicture}
 \draw[red, ultra thick](0,0) -- (5,0);
 \draw[red, ultra thick](0,1) -- (5,1);
 \draw[red, ultra thick](0,2) -- (5,2);
 \draw[blue, ultra thick ,dashed](0,0) -- (0,2);
 \draw[blue, ultra thick, dashed](1,0) -- (1,2);
 \draw[blue, ultra thick, dashed](2,0) -- (2,2);
 \draw[blue, ultra thick, dashed](3,0) -- (3,2);
 \draw[blue, ultra thick, dashed](4,0) -- (4,2);
 \draw[blue, ultra thick, dashed](5,0) -- (5,2);
 \node at (.5,.5) {$a'$};
 
  \node at (.5,1.5) {$a$};
  \node at (1.5,.5) {$b'$};
   \node at (1.5,1.5) {$b$};
  \node at (2.5,1.5) {$c$}; 
   \node at (2.5,.5) {$c'$};
  \node at (3.5,.5) {$d'$}; 
  \node at (4.5,.5) {$a'$};
   
 \node at (3.5,1.5) {$d$}; 
 \node at (4.5,1.5) {$a$}; 
\end{tikzpicture}
\hfill \hfill (a) \hfill \hfill (b) \hfill \hfill \hfill (c) \hfill \hfill \hfill \hfill

\caption[Fundamental domain for longitudinal strand cusp]{(a) $D(L)$ (b) Corresponding $P_L$ with cusps from strands in projection plane drawn in.  (c) The fundamental domain of corresponding cusp. }
\label{cuspprojcomp}
\end{figure}
\begin{proof}

   In \cite{JP2004} Purcell showed that the component in the projection plane is tiled by a sequence of rectangles two for each segment of a component which is then glued along the shaded edge as one goes around the strand. See Figure \ref{cuspprojcomp}. For each component in the projection plane there is a cusp that is tiled by rectangles coming from each portion along the component.  Two identical rectangles are glued along a white edge for the upper and lower polyhedra.  Then along the shaded edge the portion of the component adjacent will be glued along the shaded edge.  The meridian is by convention $1$, where we have a $\frac{1}{2}$ for the meridional segment along each shaded triangle, see Figure \ref{thricepuncturedspherecsf}.  The longitude will consist of $u_j$ for each edge.  The sum for each portion along the strand will be the longitude of the cusp.
\end{proof}
\begin{remark}

For a link component in the projection plane that has half-twists, tracking the longitude is a bit trickier. A FAL with a half-twist and more than one component in the projection plane,
 the component in the projection plane which goes through an odd number of half-twists will still have meridian of length $1$. The longitude will travel parallel to the projection plane except when it will pass a half-twist where it will then travel down/up to the other polyhedron thus it will increase its length by $\pm k\mu/2$ where $k$ takes into account the direction of the half-twist and how many times it passes a half-twist. The cusp shape for the component will be the sum of the edge labels plus half an integer, $\sum u_i \pm k\times\frac{1}{2}$, where the sign will depend on the direction of the half-twists. This is due to a shear, thus the cusp will not necessarily be rectangular.  However, if the component goes through an even number of half-twists, then it will have a rectangular cusp, just the longitude won't be perpendicular to the real meridian--the cusp shape will be as if no half-twists are present.  In addition, if a FAL has only one component in the projection plane then the cusp will be rectangular regardless if there is a half-twist present \cite{purcell2007meridians}.

   Experimentally, FAL with odd number of half-twists such that the presence of the half-twist reduces the number of components seem to be the ones impacted by the shear. Moreover, when there is one half-twist present the cusp shape will be $\sum u_i \pm2.$

\end{remark}
\subsection{Examples}

\subsubsection{Borromean Ring FAL with a half-twist}
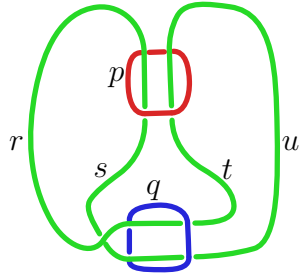
\begin{figure}
\centering
\definecolor{linkcolor0}{rgb}{0.85, 0.15, 0.15}
\definecolor{linkcolor1}{rgb}{0.15, 0.15, 0.85}
\definecolor{linkcolor2}{rgb}{0.15, 0.85, 0.15}
\begin{tikzpicture}[line width=2, line cap=round, line join=round, scale=.35]
 \node at (-.2,5) {$r$}; 
  \node at (3.6,7.5) {$p$};  
   \node at (5,3.2) {$q$}; 
   \node at (3,4) {$s$}; 
   \node at (7.8,4) {$t$}; 
   \node at (10.2,5) {$u$}; 
  \begin{scope}[color=linkcolor0]
    \draw (4.52, 8.45) .. controls (4.05, 8.43) and (4.02, 7.81) .. 
          (4.03, 7.27) .. controls (4.04, 6.71) and (4.19, 6.12) .. (4.67, 6.13);
    \draw (4.67, 6.13) .. controls (5.01, 6.14) and (5.35, 6.15) .. (5.69, 6.16);
    \draw (5.69, 6.16) .. controls (6.19, 6.17) and (6.37, 6.77) .. 
          (6.36, 7.35) .. controls (6.34, 7.91) and (6.22, 8.51) .. (5.74, 8.50);
    \draw (5.41, 8.48) .. controls (5.22, 8.47) and (5.03, 8.47) .. (4.83, 8.46);
  \end{scope}
  \begin{scope}[color=linkcolor1]
    \draw (6.29, 1.97) .. controls (6.28, 2.45) and (5.72, 2.68) .. 
          (5.17, 2.65) .. controls (4.65, 2.62) and (4.08, 2.53) .. (4.08, 2.08);
    \draw (4.08, 1.68) .. controls (4.08, 1.41) and (4.08, 1.14) .. (4.08, 0.87);
    \draw (4.08, 0.52) .. controls (4.08, 0.21) and (4.71, 0.20) .. 
          (5.22, 0.20) .. controls (5.75, 0.19) and (6.36, 0.22) .. (6.34, 0.68);
    \draw (6.34, 0.68) .. controls (6.33, 1.11) and (6.31, 1.54) .. (6.29, 1.97);
  \end{scope}
  \begin{scope}[color=linkcolor2]
    \draw (3.09, 1.32) .. controls (2.66, 0.66) and (1.64, 1.11) .. 
          (1.10, 2.07) .. controls (0.25, 3.57) and (0.18, 5.35) .. 
          (0.52, 7.04) .. controls (0.81, 8.51) and (1.45, 10.02) .. 
          (2.86, 10.18) .. controls (3.83, 10.28) and (4.63, 9.46) .. (4.64, 8.45);
    \draw (4.64, 8.45) .. controls (4.65, 7.77) and (4.66, 7.09) .. (4.67, 6.40);
    \draw (4.67, 5.86) .. controls (4.68, 5.08) and (4.34, 4.33) .. 
          (3.71, 3.86) .. controls (3.29, 3.55) and (2.74, 3.13) .. 
          (2.57, 2.81) .. controls (2.40, 2.49) and (2.71, 1.97) .. (2.95, 1.56);
    \draw (3.18, 1.17) .. controls (3.37, 0.84) and (3.70, 0.58) .. (4.08, 0.60);
    \draw (4.08, 0.60) .. controls (4.74, 0.62) and (5.40, 0.64) .. (6.07, 0.67);
    \draw (6.61, 0.69) .. controls (7.36, 0.71) and (8.09, 0.80) .. 
          (8.81, 1.00) .. controls (9.77, 1.28) and (9.72, 3.77) .. 
          (9.69, 5.61) .. controls (9.65, 7.61) and (9.61, 9.95) .. 
          (7.91, 10.19) .. controls (7.13, 10.30) and (6.21, 10.44) .. 
          (5.89, 9.83) .. controls (5.67, 9.42) and (5.58, 8.95) .. (5.60, 8.49);
    \draw (5.60, 8.49) .. controls (5.62, 7.80) and (5.65, 7.12) .. (5.68, 6.43);
    \draw (5.70, 5.88) .. controls (5.73, 5.10) and (6.20, 4.42) .. 
          (6.88, 4.02) .. controls (7.43, 3.69) and (8.00, 3.29) .. 
          (8.05, 2.66) .. controls (8.09, 2.06) and (7.29, 1.97) .. (6.57, 1.97);
    \draw (6.02, 1.96) .. controls (5.37, 1.96) and (4.72, 1.96) .. (4.08, 1.95);
    \draw (4.08, 1.95) .. controls (3.66, 1.95) and (3.32, 1.67) .. (3.09, 1.32);
  \end{scope}
\end{tikzpicture}

\caption{Borromean Ring FAL with half-twist.}
\label{Brht}
\end{figure} See Figure \ref{Brht}. Using the results from the Borromean Ring FAL without half-twist, we get $$u_2 = u_3 = -\omega_1 = -\omega_2 = \pm\frac{i}{2}.$$

Thus there are three cusps:
Cusp $p$ with cusp shape $$ 4\omega_2 = 4\times \frac{i}{2}=2i.$$
Cusp $q$ with cusp shape $$ \frac{4\omega_1}{1-2\omega_1} = \frac{4\times\frac{i}{2}}{1-2\times\frac{i}{2}} = -1+i.$$
 The cusp shape for the single component in the projection plane, has rectangular cusp with longitude $$u_1 + u_2 + u_3 + u_4 = 4\times\frac{i}{2} = 2i.$$ 

\subsubsection{3 Pretzel FAL without half-twist}
\begin{figure}
\centering
\definecolor{linkcolor0}{rgb}{0.85, 0.15, 0.15}
\definecolor{linkcolor1}{rgb}{0.15, 0.15, 0.85}
\definecolor{linkcolor2}{rgb}{0.15, 0.85, 0.15}
\definecolor{linkcolor3}{rgb}{0.15, 0.85, 0.85}
\definecolor{linkcolor4}{rgb}{0.85, 0.15, 0.85}
\definecolor{linkcolor5}{rgb}{0.85, 0.85, 0.15}
\begin{tikzpicture}[every node/.style={transform shape},scale=.5][line width=2.8, line cap=round, line join=round]
  \begin{scope}[color=linkcolor0]
    \draw (0.71, 6.26) .. controls (0.28, 6.26) and (0.14, 5.73) .. 
          (0.15, 5.23) .. controls (0.16, 4.72) and (0.41, 4.23) .. (0.85, 4.27);
    \draw (0.85, 4.27) .. controls (1.08, 4.28) and (1.30, 4.30) .. (1.53, 4.32);
    \draw (1.53, 4.32) .. controls (2.01, 4.36) and (2.30, 4.83) .. 
          (2.31, 5.33) .. controls (2.33, 5.80) and (2.15, 6.28) .. (1.75, 6.27);
    \draw (1.45, 6.27) .. controls (1.30, 6.27) and (1.15, 6.27) .. (1.00, 6.26);
  \end{scope}
  \begin{scope}[color=linkcolor1]
    \draw (4.78, 6.39) .. controls (4.32, 6.40) and (4.12, 5.86) .. 
          (4.08, 5.34) .. controls (4.03, 4.80) and (4.31, 4.28) .. (4.80, 4.27);
    \draw (4.80, 4.27) .. controls (4.98, 4.26) and (5.16, 4.26) .. (5.34, 4.25);
    \draw (5.34, 4.25) .. controls (5.82, 4.24) and (6.10, 4.76) .. 
          (6.12, 5.29) .. controls (6.14, 5.81) and (6.02, 6.36) .. (5.58, 6.37);
    \draw (5.34, 6.37) .. controls (5.23, 6.38) and (5.13, 6.38) .. (5.03, 6.38);
  \end{scope}
  \begin{scope}[color=linkcolor2]
    \draw (8.37, 6.34) .. controls (7.91, 6.33) and (7.80, 5.76) .. 
          (7.76, 5.25) .. controls (7.71, 4.70) and (7.94, 4.14) .. (8.43, 4.12);
    \draw (8.43, 4.12) .. controls (8.68, 4.10) and (8.93, 4.09) .. (9.17, 4.07);
    \draw (9.17, 4.07) .. controls (9.65, 4.05) and (9.78, 4.65) .. 
          (9.79, 5.21) .. controls (9.79, 5.75) and (9.80, 6.38) .. (9.36, 6.37);
    \draw (9.10, 6.36) .. controls (8.95, 6.36) and (8.80, 6.35) .. (8.65, 6.35);
  \end{scope}
  \begin{scope}[color=linkcolor3]
    \draw (4.93, 6.38) .. controls (4.99, 7.38) and (4.31, 8.28) .. 
          (3.36, 8.29) .. controls (2.35, 8.29) and (1.64, 7.34) .. (1.60, 6.27);
    \draw (1.60, 6.27) .. controls (1.58, 5.70) and (1.56, 5.12) .. (1.54, 4.54);
    \draw (1.52, 4.10) .. controls (1.50, 3.32) and (2.25, 2.79) .. 
          (3.09, 2.77) .. controls (3.94, 2.75) and (4.74, 3.25) .. (4.78, 4.04);
    \draw (4.81, 4.49) .. controls (4.85, 5.12) and (4.89, 5.75) .. (4.93, 6.38);
  \end{scope}
  \begin{scope}[color=linkcolor4]
    \draw (8.50, 6.34) .. controls (8.52, 7.24) and (7.90, 8.03) .. 
          (7.03, 8.06) .. controls (6.15, 8.10) and (5.48, 7.30) .. (5.44, 6.37);
    \draw (5.44, 6.37) .. controls (5.41, 5.74) and (5.38, 5.11) .. (5.35, 4.47);
    \draw (5.33, 4.03) .. controls (5.30, 3.27) and (6.02, 2.72) .. 
          (6.83, 2.76) .. controls (7.63, 2.80) and (8.41, 3.17) .. (8.43, 3.89);
    \draw (8.44, 4.34) .. controls (8.46, 5.01) and (8.48, 5.68) .. (8.50, 6.34);
  \end{scope}
  \begin{scope}[color=linkcolor5]
    \draw (0.85, 6.26) .. controls (0.86, 8.49) and (2.83, 10.18) .. 
          (5.12, 10.18) .. controls (7.38, 10.19) and (9.33, 8.53) .. (9.25, 6.36);
    \draw (9.25, 6.36) .. controls (9.23, 5.67) and (9.20, 4.98) .. (9.18, 4.29);
    \draw (9.17, 3.85) .. controls (9.09, 1.71) and (7.14, 0.15) .. 
          (4.94, 0.25) .. controls (2.73, 0.34) and (0.84, 1.92) .. (0.85, 4.04);
    \draw (0.85, 4.49) .. controls (0.85, 5.08) and (0.85, 5.67) .. (0.85, 6.26);
  \end{scope}
 \draw[red, very thick] (.85,3.5) -- (1.7,3.5); 
 \draw[red, very thick] (4.6,3.5) -- (5.45,3.5); 
   \draw[red, very thick] (8.35,3.5) -- (9.15,3.5); 
    
  \draw[red, very thick] (.95,7.1) -- (1.8,7.1); 
  \draw[red, very thick] (4.85,7.1) -- (5.6,7.1);
  \draw[red, very thick] (8.35,7.1) -- (9.15,7.1); 
  
  \node[black] at (3.3,5.6) {$\gimel$};
  
  \node[thick, black] at (4,.35) {\large\textgreater};
  
  \node[thick, black] at (3.6,2.9) {\large\textgreater};
   \node[thick, black] at (7.5,2.9) {\large\textgreater};
   
   
    \node[thick, black] at (6.15,5.3) {\large$\wedge$};
   
   \node[thick, black] at (4.15,5.3) {\large$\wedge$};
   \node[thick, black] at (2.25,5.3) {\large$\wedge$};
   \node[thick, black] at (7.8,5.3) {\large$\wedge$};

  \node at (8.5,3) {$-\frac{1}{4}$}; 
   \node at (8.5,7.6) {$-\frac{1}{4}$};  
     \node at (1.5,3) {$-\frac{1}{4}$};
     \node at (1.5,7.6) {$-\frac{1}{4}$};
     \node at (5,3) {$\frac{1}{4}$};
     \node at (5.2,7.6) {$\frac{1}{4}$};
   
   \node[black] at (3,3.2) {$u_4$}; \node[black] at (3,2.4) {$u_4$};
   
   \node[black] at (3.4,8.6) {$u_2$}; 
    \node[black] at (3.4,7.9) {$u_2$}; 
    \node[black] at (7,8.4) {$u_3$};  
    
    \node[black] at (7,7.64) {$u_3$};

   \node [black] at  (7,5.6) {$\daleth$};
   
   \node[black] at (5,9.7) {$u_1$};
  \node[black] at (5,9) {$\aleph$};
  
  \node[black] at (6.9,2.4) {$u_5$};
   \node[black] at (6.9,3.2) {$u_5$};
   \node[black] at (5,.5) {$u_6$};
   
   \node[black] at (7.5,5.3) {$1$};
   \node[black] at (6.4,5.3) {$1$};
  
   \node[black] at (3.9,5.3) {$1$};
  
   \node[black] at (2.6,5.3) {$1$};

   \node[black] at (5,1.7) {$\beth$};
 
   \node[black] at (.5,4) {$\omega_1$};
   \node[black] at (.5,6.54) {$\omega_1$};
   \node[black] at (2,4) {$\omega_1$};
   \node[black] at (2,6.55) {$\omega_1$};
   \node[black] at (4.35,4) {$\omega_2$};
  \node[black] at (5.8,4) {$-\omega_2$};
  \node[black] at (5.9,6.7) {$-\omega_2$};
 \node[black] at (4.35,6.7) {$\omega_2$}; 
 \node[black] at (8,4) {$\omega_3$}; 
\node[black] at (9.6,3.85) {$\omega_3$};  
 
 \node[black] at (8,6.6) {$\omega_3$}; 
\node[black] at (9.6,6.65) {$\omega_3$};

\end{tikzpicture}

\caption{Three pretzel FAL.}
\label{3pfal}
\end{figure}
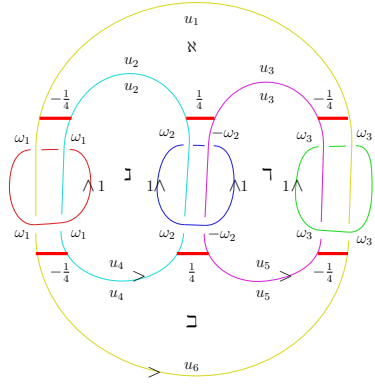

Region $\aleph$: 
This is a three-sided region with shape parameters:
$$\xi_1=\frac{-\frac{1}{4}}{u_1u_3} = 1, \quad \xi_2=\frac{-\frac{1}{4}}{u_1u_2} = 1,
\quad \xi_3=\frac{-\frac{1}{4}}{u_2u_3} = 1$$ solving gives us the relations $$u_1 = u_2 =u_3 \quad \textrm{and} \quad u_1 = \pm\frac{i}{2}$$
Region $\beth$: 
This is a three-sided region with shape parameters:
$$\xi_1=\frac{-\frac{1}{4}}{u_4u_6} = 1, \quad \xi_2=\frac{-\frac{1}{4}}{u_4u_5} = 1,
\quad \xi_3=\frac{-\frac{1}{4}}{u_5u_6} = 1$$ solving gives us the relations $$u_4 = u_5 =u_6 \quad \textrm{and} \quad u_4 = \pm\frac{i}{2}$$
Region $\gimel$: 
$$\xi_1=\frac{\omega_1}{u_{4}}, \quad \xi_2=\frac{-\omega_2}{u_{4}},
\quad \xi_3=\frac{-\omega_2}{u_{2}}, \quad \xi_4=\frac{\omega_1}{u_{2}}$$ This is a four-sided region with equations:
$$\frac{\omega_1}{u_{4}} - \frac{\omega_2}{u_{4}} = 1 , \quad \frac{-\omega_2}{u_{4}} - \frac{\omega_2}{u_{2}} = 1,  \quad \frac{-\omega_2}{u_{2}} + \frac{\omega_1}{u_{2}} = 1,  \quad \frac{\omega_1}{u_{2}} + \frac{\omega_1}{u_{4}} = 1$$ solving gives us the relations
$$u_{2} = u_{4}, \quad \omega_1 = -\omega_2, \quad \textrm{and} \quad u_{2} = 2\omega_1.$$
Region $\daleth$: 
$$\xi_1=\frac{-\omega_2}{u_{5}}, \quad \xi_2=\frac{-\omega_3}{u_{5}},
\quad \xi_3=\frac{-\omega_3}{u_{3}}, \quad \xi_4=\frac{-\omega_2}{u_{3}}$$  solving gives us the relations
$$u_{3} = u_{5}, \quad \omega_2 = \omega_3,  \quad u_{3} = -2\omega_2$$ all $$u_i = \pm\frac{i}{2} \quad  \textrm{and} \quad \omega_i = \pm\frac{i}{4}.$$
The cusp shapes for all $6$ components are equal to $i$.
The three crossing circles have cusp shape $$4\omega_i = 4\times \frac{i}{4} = i.$$
The component $s + y$ has cusp shape  $$u_1 + u_6 = 2\times\frac{i}{2} = i.$$
The component $t + v$ has cusp shape  $$u_2 + u_4 = 2\times\frac{i}{2} = i.$$
The component $u + x$ has cusp shape  $$u_3 + u_5 = 2\times\frac{i}{2} = i.$$
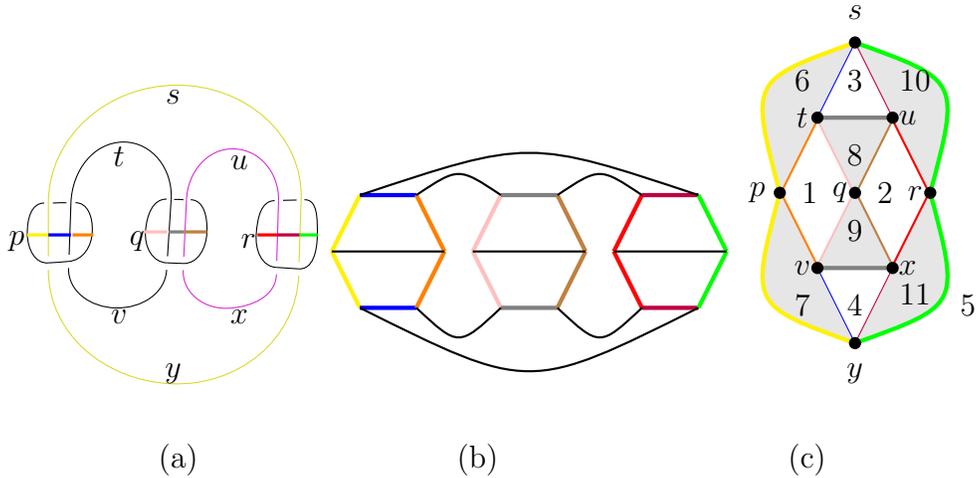
\begin{figure}[h]
\centering
\definecolor{linkcolor0}{rgb}{0.85, 0.15, 0.15}
\definecolor{linkcolor1}{rgb}{0.15, 0.15, 0.85}
\definecolor{linkcolor2}{rgb}{0.15, 0.85, 0.15}
\definecolor{linkcolor3}{rgb}{0.15, 0.85, 0.85}
\definecolor{linkcolor4}{rgb}{0.85, 0.15, 0.85}
\definecolor{linkcolor5}{rgb}{0.85, 0.85, 0.15}
\begin{tikzpicture}[scale=.4][line width=2.8, line cap=round, line join=round]
    \draw (0.71, 6.26) .. controls (0.28, 6.26) and (0.14, 5.73) .. 
          (0.15, 5.23) .. controls (0.16, 4.72) and (0.41, 4.23) .. (0.85, 4.27);
    \draw (0.85, 4.27) .. controls (1.08, 4.28) and (1.30, 4.30) .. (1.53, 4.32);
    \draw (1.53, 4.32) .. controls (2.01, 4.36) and (2.30, 4.83) .. 
          (2.31, 5.33) .. controls (2.33, 5.80) and (2.15, 6.28) .. (1.75, 6.27);
    \draw (1.45, 6.27) .. controls (1.30, 6.27) and (1.15, 6.27) .. (1.00, 6.26);
    \draw (4.78, 6.39) .. controls (4.32, 6.40) and (4.12, 5.86) .. 
          (4.08, 5.34) .. controls (4.03, 4.80) and (4.31, 4.28) .. (4.80, 4.27);
    \draw (4.80, 4.27) .. controls (4.98, 4.26) and (5.16, 4.26) .. (5.34, 4.25);
    \draw (5.34, 4.25) .. controls (5.82, 4.24) and (6.10, 4.76) .. 
          (6.12, 5.29) .. controls (6.14, 5.81) and (6.02, 6.36) .. (5.58, 6.37);
    \draw (5.34, 6.37) .. controls (5.23, 6.38) and (5.13, 6.38) .. (5.03, 6.38);
    \draw (8.37, 6.34) .. controls (7.91, 6.33) and (7.80, 5.76) .. 
          (7.76, 5.25) .. controls (7.71, 4.70) and (7.94, 4.14) .. (8.43, 4.12);
    \draw (8.43, 4.12) .. controls (8.68, 4.10) and (8.93, 4.09) .. (9.17, 4.07);
    \draw (9.17, 4.07) .. controls (9.65, 4.05) and (9.78, 4.65) .. 
          (9.79, 5.21) .. controls (9.79, 5.75) and (9.80, 6.38) .. (9.36, 6.37);
    \draw (9.10, 6.36) .. controls (8.95, 6.36) and (8.80, 6.35) .. (8.65, 6.35);
    \draw (4.93, 6.38) .. controls (4.99, 7.38) and (4.31, 8.28) .. 
          (3.36, 8.29) .. controls (2.35, 8.29) and (1.64, 7.34) .. (1.60, 6.27);
    \draw (1.60, 6.27) .. controls (1.58, 5.70) and (1.56, 5.12) .. (1.54, 4.54);
    \draw (1.52, 4.10) .. controls (1.50, 3.32) and (2.25, 2.79) .. 
          (3.09, 2.77) .. controls (3.94, 2.75) and (4.74, 3.25) .. (4.78, 4.04);
    \draw (4.81, 4.49) .. controls (4.85, 5.12) and (4.89, 5.75) .. (4.93, 6.38);
  \begin{scope}[color=linkcolor4]
    \draw (8.50, 6.34) .. controls (8.52, 7.24) and (7.90, 8.03) .. 
          (7.03, 8.06) .. controls (6.15, 8.10) and (5.48, 7.30) .. (5.44, 6.37);
    \draw (5.44, 6.37) .. controls (5.41, 5.74) and (5.38, 5.11) .. (5.35, 4.47);
    \draw (5.33, 4.03) .. controls (5.30, 3.27) and (6.02, 2.72) .. 
          (6.83, 2.76) .. controls (7.63, 2.80) and (8.41, 3.17) .. (8.43, 3.89);
    \draw (8.44, 4.34) .. controls (8.46, 5.01) and (8.48, 5.68) .. (8.50, 6.34);
  \end{scope}
  \begin{scope}[color=linkcolor5]
    \draw (0.85, 6.26) .. controls (0.86, 8.49) and (2.83, 10.18) .. 
          (5.12, 10.18) .. controls (7.38, 10.19) and (9.33, 8.53) .. (9.25, 6.36);
    \draw (9.25, 6.36) .. controls (9.23, 5.67) and (9.20, 4.98) .. (9.18, 4.29);
    \draw (9.17, 3.85) .. controls (9.09, 1.71) and (7.14, 0.15) .. 
          (4.94, 0.25) .. controls (2.73, 0.34) and (0.84, 1.92) .. (0.85, 4.04);
    \draw (0.85, 4.49) .. controls (0.85, 5.08) and (0.85, 5.67) .. (0.85, 6.26);
  \end{scope}
 
   \draw[yellow, very thick] (.15,5.2) -- (.84,5.2); 
  \draw[blue, very thick] (.85,5.2) -- (1.6,5.2); 
  \draw[orange, very thick] (1.65,5.2) -- (2.35,5.2); 
  
   \draw[pink, very thick] (4.8,5.3) -- (4,5.3);
  \draw[gray, very thick] (4.85,5.3) -- (5.35,5.3);
   \draw[brown, very thick] (6.12,5.3) -- (5.35,5.3);
   \draw[red, very thick] (8.45,5.2) -- (7.8,5.2); 
  \draw[purple, very thick] (8.45,5.2) -- (9.2,5.2); 
  \draw[green, very thick] (9.8,5.2) -- (9.2,5.2); 
   \node at (-.2,5) {$p$};
     
    \node at (3.8,5) {$q$};
      
     \node at (7.5,5) {$r$};
     \node at (5,9.8) {$s$};
     \node at (3.2,7.8) {$t$};
     \node at (7.2,7.6) {$u$};
     \node at (3.2,2.5) {$v$};
     \node at (7.2,2.5) {$x$}; 
     \node at (5,.6) {$y$};
  \end{tikzpicture}
  \begin{tikzpicture}[scale=.75]
  \draw[yellow, ultra thick](0.5,2.5)--(1,1.5);
  \draw[yellow, ultra thick](0.5,2.5)--(1,3.5);
  \draw[blue, ultra thick](2,1.5)--(1,1.5);
  \draw[blue, ultra thick](1,3.5)--(2,3.5);
  \draw[orange, ultra thick](2,3.5)--(2.5,2.5);
   \draw[orange, ultra thick](2,1.5)--(2.5,2.5);
  
   \draw[pink, ultra thick](3,2.5)--(3.5,3.5);
   
   \draw[pink, ultra thick](3,2.5)--(3.5,1.5);
   \draw[gray, ultra thick](4.5,3.5)--(3.5,3.5);
   
   \draw[gray, ultra thick](4.5,1.5)--(3.5,1.5);
   \draw[brown, ultra thick](4.5,3.5)--(5,2.5);
   \draw[brown, ultra thick](4.5,1.5)--(5,2.5);
   
    \draw[red, ultra thick](6,1.5)--(5.5,2.5);
    
   \draw[red, ultra thick](6,3.5)--(5.5,2.5);
   
   \draw[purple, ultra thick](6,1.5)--(7,1.5);
    \draw[purple, ultra thick](6,3.5)--(7,3.5);
   
    \draw[green, ultra thick](7.5,2.5)--(7,1.5);
   
   \draw[green, ultra thick](7.5,2.5)--(7,3.5);

  \draw[thick](1,1.5).. controls (4,0) .. (7,1.5); 
  \draw[thick](1,3.5).. controls (4,4.5) .. (7,3.5);
  
   \draw[thick](2,3.5).. controls (2.75,4) .. (3.5,3.5);

  \draw[thick](2,1.5).. controls (2.75,0.8) .. (3.5,1.5);
    \draw[thick](4.5,1.5).. controls (5.25,0.8) .. (6,1.5);
  \draw[thick](4.5,3.5).. controls (5.25,4) .. (6,3.5);
  \draw[thick](.5,2.5)--(2.5,2.5);
  
   \draw[thick](3,2.5)--(5,2.5);
   \draw[thick](5.5,2.5)--(7.5,2.5);
  
  \end{tikzpicture}
  \begin{tikzpicture}
  
   \draw[orange, ultra thick](1,2.5)--(1.5,1.5); 
    
     \draw[orange, ultra thick](1,2.5)--(1.5,3.5); 
    
     \draw[pink, ultra thick](2,2.5)--(1.5,1.5); 
     \draw[pink, ultra thick](2,2.5)--(1.5,3.5); 
     \draw[brown, ultra thick](2,2.5)--(2.5,1.5); 
    
     \draw[brown, ultra thick](2,2.5)--(2.5,3.5); 
     \fill[gray!20]
     (2,2.5)--(1.5,1.5)--(2.5,1.5);
     \fill[gray!20]
     (2,2.5)--(1.5,3.5)--(2.5,3.5);
     
    \draw[red, ultra thick](3,2.5)--(2.5,1.5); 
     
     \draw[red, ultra thick](3,2.5)--(2.5,3.5); 
      \draw[gray, ultra thick](1.5,1.5)--(2.5,1.5); 
        \draw[gray, ultra thick](1.5,3.5)--(2.5,3.5); 
     
     \fill[gray!20](2,4.5).. controls (.6,4)..(1,2.5);
     \draw[yellow, ultra thick](2,4.5).. controls (.6,4)..(1,2.5);
     
    \fill[gray!20](2,.5).. controls (.6,1)..(1,2.5);

     \draw[yellow, ultra thick](2,.5).. controls (.6,1)..(1,2.5);
     
    \fill[gray!20](2,4.5).. controls (3.4,4)..(3,2.5);

     \draw[green, ultra thick](2,4.5).. controls (3.4,4)..(3,2.5);
     \fill[gray!20](3,2.5).. controls (3.4,1)..(2,.5);
      \draw[green, ultra thick](3,2.5).. controls (3.4,1)..(2,.5);
      
      \draw[blue](2,4.5)--(1.5,3.5);
      \draw[blue](2,.5)--(1.5,1.5);
     \draw[purple](2,.5)--(2.5,1.5);
     \draw[purple](2,4.5)--(2.5,3.5);
     
     \fill (1,2.5) circle (.081);
    \fill (2,2.5) circle (.081);
      \fill (3,2.5) circle (.081);
     \fill (1.5,1.5) circle (.081);  \fill (2.5,1.5) circle (.081);  \fill (1.5,3.5) circle (.081);  \fill (2.5,3.5) circle (.081);
    \fill (2,.5) circle (.081);   \fill (2,4.5) circle (.081);
     \node at (.7,2.5) {$p$};
     
    \node at (1.8,2.5) {$q$};
      
     \node at (2.8,2.5) {$r$};
     \node at (2,4.9) {$s$};
     \node at (1.3,3.5) {$t$};
     \node at (2.7,3.5) {$u$};
     \node at (1.3,1.5) {$v$};
     \node at (2.7,1.5) {$x$}; 
     \node at (2,.1) {$y$};
     \node at (1.4,2.5) {$1$};
     \node at (2.4,2.5) {$2$};
     \node at (2,4) {$3$};
     
     \node at (2,1) {$4$};
     \node at (3.5,1) {$5$};
     \node at (1.3,4) {$6$};
     \node at (1.3,1) {$7$};
     \node at (2,3) {$8$};
      \node at (2,2) {$9$};
     \node at (2.8,4) {$10$}; 
      \node at (2.8,1.15) {$11$};
  \end{tikzpicture}

 
 (a) \quad \quad \quad \quad \quad \quad \quad \quad (b) \quad \quad \quad \quad \quad \quad \quad \quad \quad (c) 
\caption[$FALP_3$ decomposition and circle packing]{(a)$FALP_3$ with crossing geodesics colored. (b) $T_{FALP_3}$ (c) $P_{FALP_3}$ }
\label{3p}
\end{figure}

\begin{figure}[h]
    \centering
      \begin{tikzpicture}
  \draw (0,0) circle (1);
  \draw (2,0) circle (1);
  \draw (1,1.33) circle (.67);
  \draw (1,-1.33) circle (.67);
  \draw (1,0) circle (2);
  \draw[dashed](1,.5) circle (.5);
  \draw[dashed](1,-.5) circle (.5);
  \node at (-.7,0) {$p$};
     
    \node at (.75,0) {$q$};
      
     \node at (2.7,0) {$r$}; 
   \node at (1,2.2) {$s$};
     \node at (.4,.6) {$t$};
     \node at (1.7,.6) {$u$};
     \node at (.4,-.7) {$v$};
     \node at (1.7,-.7) {$x$}; 
     \node at (1,-2.2) {$y$};
  \node at (0,0) {$1$};
     \node at (2,0) {$2$};
     \node at (1,1.5) {$3$};
     
     \node at (1,-1.65) {$4$};
     \node at (3,1) {$5$};
    \node at (1,.5) {$8$};
      \node at (1,-.5) {$9$};
  \end{tikzpicture}
  \hspace{.1cm}
  \begin{tikzpicture}[scale=1.4]
  \draw (0,0) circle (.5);
  \draw (1,0) circle (.5);
  \draw (2,0) circle (.5);
  \draw[ultra thick] (-.5,.5) --(2.5,.5);
   \draw[ultra thick] (-.5,-.5) --(2.5,-.5);
    \draw[ultra thick, dashed] (0,.5) --(0,-.5);
    \draw[ultra thick, dashed] (2,.5) --(2,-.5);
    \node at (1,.75) {$1$};
    \node at (1,-.75) {$2$};
    \node at (-.1,0) {$8$};
     \node at (2.1,0) {$9$};
    
  \end{tikzpicture}
  \hspace{.1cm}
  \begin{tikzpicture}[scale=.8]
\fill [gray!20] (0,.3) circle (.3);
\fill [gray!20] (2,.3) circle (.3);
\fill [gray!20] (1.5,.8) circle (.3);
\fill [gray!20] (4,.3) circle (.3);

\fill [gray!20] (5.5,.8) circle (.3);

\fill [gray!20] (3.5,.8) circle (.3);
\draw [orange, dashed, ultra thick] (5.5,0.5) arc (0:180:1);
\draw [orange, dashed, ultra thick] (3.5,0.5) arc (0:180:1);
\draw [red, dashed, ultra thick] (4,0) arc (0:180:1);
\draw [red, dashed, ultra thick] (2,0) arc (0:180:1);
\draw [brown, dashed, ultra thick] (0,0) -- (0,3);

\draw [pink, dashed, ultra thick] (1.5,0.5) -- (1.5,3.5);
\draw [brown, dashed, ultra thick] (4,0) -- (4,3);
\draw [pink, dashed, ultra thick] (5.5,0.5) -- (5.5,3.5);

\draw [gray, dashed, ultra thick] (1.5,0.5) arc (0:180:.75);
\draw [gray, dashed, ultra thick] (5.5,0.5) arc (0:200:.75);

\draw [ultra thick] (0,3) -- (4,3);
\draw [ ultra thick] (1.5,3.5) -- (5.5,3.5);
\draw[ultra thick] (0,3) -- (1.5,3.5);
\draw[ultra thick] (4,3) -- (5.5,3.5);

\node at (1.5,0.3){\bf\large $t$};
\node at (5.5,0.3){\bf\large $v$};
\node at (0,-.2){\bf\large $u$};
\node at (4,-.2){\bf\large $x$};
\node at (3.3,0.3){\bf\large $p$};
\node at (2,-0.2){\bf\large $r$};

\node at (6,2.2){\bf\large $f_9$};

\node at (-0.5,2){\bf\large $f_8$};
\node at (3,4.2,){\bf\large $f_1$};
\node at (2,3.25){\bf\large $f_2$};


\end{tikzpicture}
     (a) \quad \quad \quad \quad \quad \quad \quad \quad (b) \quad \quad \quad \quad \quad \quad \quad \quad \quad (c) 
    \caption[$FALP_3$ circle packing]{(a) White circle packing, with two dual shaded circles drawn on top. (b)  View of rectangle in $\mathbb{H}^3$ formed by taking $q$ to $\infty$ with view from $\infty$ (c)  The faces of 1,2,8, and 9 in $\mathbb{H}^3$, with view from vertical plane $xz$.}
    \label{fal3cp}
\end{figure}
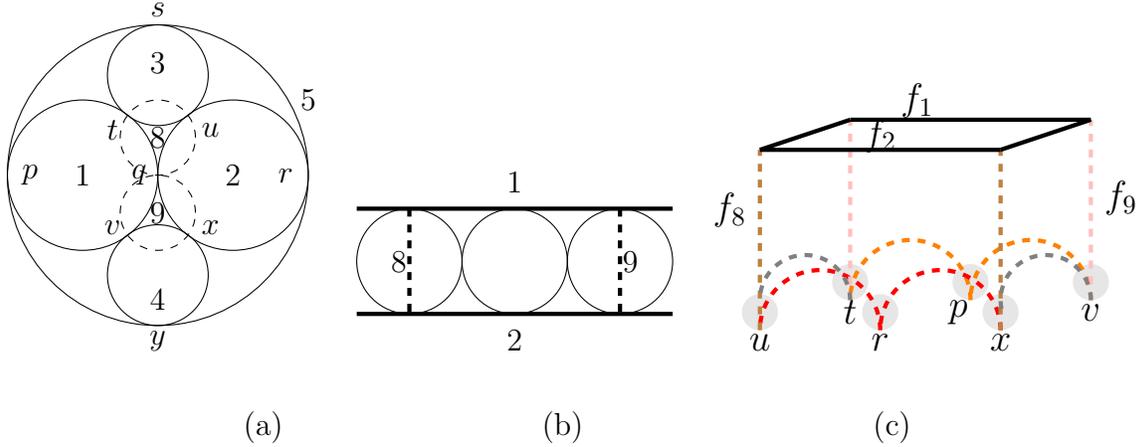

\subsubsection{3 Pretzel FAL with half-twist}

\begin{figure}
\centering
\definecolor{linkcolor0}{rgb}{0.85, 0.15, 0.15}
\definecolor{linkcolor1}{rgb}{0.15, 0.15, 0.85}
\definecolor{linkcolor2}{rgb}{0.15, 0.85, 0.15}
\definecolor{linkcolor3}{rgb}{0.15, 0.85, 0.85}
\definecolor{linkcolor4}{rgb}{0.85, 0.15, 0.85}
\begin{tikzpicture}[line width=1.9, line cap=round, line join=round, scale=.5]
\node at (-.2,5) {$p$};
\node at (3.6,5) {$n$};
\node at (10.2,5) {$m$};
\node at (3.2,7.88) {$r$};

\node at (7,7.6) {$u$};
\node at (5,9.2) {$t$};
\node at (3.2,2.1) {$q$};
\node at (7,2.1) {$v$};
\node at (5,.5) {$s$};

  \begin{scope}[color=linkcolor0]
    \draw (0.89, 6.55) .. controls (0.33, 6.56) and (0.21, 5.84) .. 
          (0.18, 5.20) .. controls (0.14, 4.54) and (0.27, 3.84) .. (0.84, 3.82);
    \draw (0.84, 3.82) .. controls (1.07, 3.81) and (1.30, 3.81) .. (1.53, 3.80);
    \draw (1.53, 3.80) .. controls (2.13, 3.78) and (2.45, 4.46) .. 
          (2.44, 5.15) .. controls (2.43, 5.78) and (2.41, 6.53) .. (1.92, 6.54);
    \draw (1.65, 6.54) .. controls (1.50, 6.54) and (1.35, 6.54) .. (1.20, 6.55);
  \end{scope}
  \begin{scope}[color=linkcolor1]
    \draw (4.49, 6.40) .. controls (3.97, 6.41) and (3.93, 5.72) .. 
          (3.92, 5.11) .. controls (3.91, 4.50) and (3.99, 3.81) .. (4.52, 3.79);
    \draw (4.52, 3.79) .. controls (4.92, 3.78) and (5.32, 3.76) .. (5.72, 3.75);
    \draw (5.72, 3.75) .. controls (6.27, 3.73) and (6.48, 4.39) .. 
          (6.47, 5.04) .. controls (6.46, 5.64) and (6.46, 6.36) .. (5.98, 6.37);
    \draw (5.64, 6.37) .. controls (5.38, 6.38) and (5.11, 6.38) .. (4.85, 6.39);
  \end{scope}
  \begin{scope}[color=linkcolor2]
    \draw (8.31, 6.28) .. controls (7.74, 6.28) and (7.50, 5.60) .. 
          (7.54, 4.94) .. controls (7.57, 4.26) and (7.91, 3.59) .. (8.53, 3.58);
    \draw (8.53, 3.58) .. controls (8.75, 3.58) and (8.97, 3.58) .. (9.20, 3.57);
    \draw (9.20, 3.57) .. controls (9.74, 3.57) and (9.81, 4.28) .. 
          (9.80, 4.92) .. controls (9.79, 5.57) and (9.67, 6.28) .. (9.11, 6.28);
    \draw (8.86, 6.28) .. controls (8.77, 6.28) and (8.69, 6.28) .. (8.61, 6.28);
  \end{scope}
  \begin{scope}[color=linkcolor3]
    \draw (1.05, 6.55) .. controls (1.18, 8.21) and (3.11, 8.85) .. 
          (4.97, 8.81) .. controls (6.86, 8.76) and (8.78, 8.00) .. (8.94, 6.28);
    \draw (8.94, 6.28) .. controls (9.02, 5.45) and (9.10, 4.63) .. (9.17, 3.80);
    \draw (9.22, 3.35) .. controls (9.40, 1.41) and (7.33, 0.15) .. 
          (5.12, 0.23) .. controls (3.03, 0.30) and (0.85, 0.78) .. (1.33, 2.32);
    \draw (1.33, 2.32) .. controls (1.41, 2.56) and (1.46, 3.15) .. (1.51, 3.57);
    \draw (1.55, 4.02) .. controls (1.63, 4.86) and (1.72, 5.70) .. (1.80, 6.54);
    \draw (1.80, 6.54) .. controls (1.87, 7.23) and (2.56, 7.64) .. 
          (3.29, 7.62) .. controls (4.02, 7.60) and (4.65, 7.09) .. (4.63, 6.39);
    \draw (4.63, 6.39) .. controls (4.59, 5.60) and (4.56, 4.81) .. (4.53, 4.02);
    \draw (4.51, 3.57) .. controls (4.48, 2.70) and (4.15, 1.80) .. 
          (3.35, 1.72) .. controls (2.70, 1.65) and (2.03, 1.78) .. (1.51, 2.18);
    \draw (1.22, 2.41) .. controls (0.86, 2.68) and (0.78, 3.16) .. (0.82, 3.61);
    \draw (0.85, 4.05) .. controls (0.92, 4.88) and (0.98, 5.71) .. (1.05, 6.55);
  \end{scope}
  \begin{scope}[color=linkcolor4]
    \draw (8.52, 6.28) .. controls (8.52, 6.90) and (7.89, 7.27) .. 
          (7.22, 7.34) .. controls (6.55, 7.40) and (5.90, 7.00) .. (5.87, 6.37);
    \draw (5.87, 6.37) .. controls (5.82, 5.57) and (5.78, 4.77) .. (5.73, 3.97);
    \draw (5.70, 3.52) .. controls (5.65, 2.59) and (6.21, 1.72) .. 
          (7.07, 1.70) .. controls (7.92, 1.67) and (8.54, 2.46) .. (8.53, 3.36);
    \draw (8.53, 3.81) .. controls (8.53, 4.63) and (8.52, 5.46) .. (8.52, 6.28);
  \end{scope}
\end{tikzpicture}
\caption[$FALP_3$ with half-twist]{Diagram of 3-pretzel FAL with half-twist with components labeled.}
\label{3pfalwt}
\end{figure}
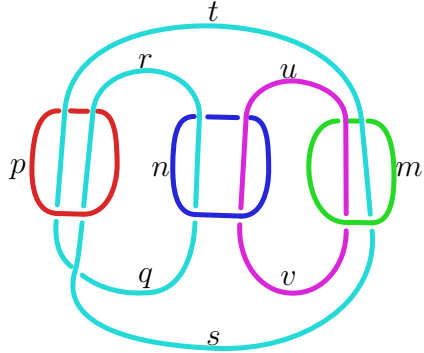

In the diagram the five-sided region does not correspond to a five-sided ideal polygon, rather the polyhedral decomposition is the same as in the 3-pretzel without any half-twist, but the gluing of the faces change.  Thus to find the cusp shapes we first obtain the parameters from the 3-pretzel without any half-twist and then calculate the cusps off those parameters and the above theorems.
Using the information from the 3-pretzel FAL without a half-twist, we have  
$\displaystyle{u_i = \pm\frac{i}{2}}$ and  $\displaystyle{\omega_i = \pm\frac{i}{4}}$.
The cusp shape for the red crossing circle $p$ is $$\frac{4\omega_1}{1-2\omega_1} = \frac{4\times\frac{i}{4}}{1-2\times\frac{i}{4}} = \frac{-2}{5}+\frac{4i}{5}.$$
The cusp shapes for the blue and green crossing circles $n$ and $m$ respectively are $$4\omega_2 = 4 \times \frac{i}{4} = i.$$
The cusp shape for the light blue component in the projection plane $r + s + t + q$ is $$u_2 + u_6 + u_1 + u_4 -4\frac{1}{2} = 4\times \frac{i}{2}-2 = -2 + 2i.$$
The cusp shape for the pink component in the projection plane $u + v$ is  $$u_3 + u_5 = 2\frac{i}{2} = i.$$

\section{Applications}
\label{sec:itf}
\subsection{Invariant Trace Fields of FAL Complements.}

Let $M$ be a complete orientable finite volume hyperbolic $3$-manifold, then $M=\mathbb{H}^3/\Gamma$ where $\Gamma=\pi_1(M)$  (a Kleinian group)
is a discrete subgroup of $PSL(2, \mathbb{C}) = Isom^+(\mathbb{H}^3
).$  Let $\rho : SL(2,\mathbb{C}) \rightarrow PSL(2,\mathbb{C})$ be quotient map and let $\overline{\Gamma}=\rho^{-1}(\Gamma)$ \cite{callahan1998hyperbolic}. 
\begin{define}
 The \emph{trace field} $KM = K\Gamma$ is the field over $\mathbb{Q}$ generated by all the traces of $\Gamma$, i.e. $K\Gamma := \mathbb{Q}(\{tr(\gamma) | \gamma \in \overline{\Gamma}\}).$
 The \emph{invariant trace field} of $\Gamma$ is $kM = k\Gamma$ := $K\Gamma^{(2)}$ where $\Gamma^{(2)} : = <\gamma^2 | \gamma \in \Gamma>.$
\end{define}
It follows from Mostow-Prasad rigidity that $KM$ and $kM$ are number fields, i.e. finite extensions of $\mathbb{Q}$ and are invariants of $M$.
\begin{define}
 $M_1$ and $M_2$ are \emph{commensurable} if they have common finite-sheeted covers.
\end{define}
\begin{theorem}\cite{neumann1992arithmetic}
The invariant trace field $kM$ is an invariant of the commensurability class
of $M.$
\end{theorem}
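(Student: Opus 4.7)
The plan is to reduce the commensurability invariance of $kM$ to the more technical statement that the invariant trace field is preserved under passage to finite-index subgroups of the Kleinian group. If $M_1 = \HH^3/\Gamma_1$ and $M_2 = \HH^3/\Gamma_2$ are commensurable, then by definition they share a common finite-sheeted cover $N$. Lifting to $\HH^3$ and choosing compatible fundamental domains, we can find $g \in \mathrm{Isom}^+(\HH^3)$ such that $\pi_1(N) = \Gamma_1 \cap g\Gamma_2 g^{-1}$ sits as a finite-index subgroup of both $\Gamma_1$ and $g\Gamma_2 g^{-1}$. Traces in $SL(2,\C)$ are conjugation-invariant, so $k(g\Gamma_2 g^{-1}) = k\Gamma_2$. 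Thus it suffices to prove the following lemma: if $\Gamma' \leq \Gamma$ is a finite-index subgroup of a finitely generated, non-elementary Kleinian group $\Gamma$, then $k\Gamma = k\Gamma'$.

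One inclusion is immediate. Because $\Gamma' \subseteq \Gamma$ we have $(\Gamma')^{(2)} \subseteq \Gamma^{(2)}$, hence $k\Gamma' = K(\Gamma')^{(2)} \subseteq K\Gamma^{(2)} = k\Gamma$. For the reverse inclusion, the tools are the standard trace identities in $SL(2,\C)$:
\[
\mathrm{tr}(AB) + \mathrm{tr}(AB^{-1}) = \mathrm{tr}(A)\,\mathrm{tr}(B), \qquad \mathrm{tr}(A^{n+1}) = \mathrm{tr}(A)\,\mathrm{tr}(A^n) - \mathrm{tr}(A^{n-1}).
\]
Given $\gamma \in \Gamma^{(2)}$, because $[\Gamma : \Gamma'] < \infty$ there is an integer $k \geq 1$ with $\gamma^{k} \in (\Gamma')^{(2)}$, so $\mathrm{tr}(\gamma^k) \in k\Gamma'$. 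Iterating the Chebyshev-type recurrence expresses $\mathrm{tr}(\gamma^k)$ as a polynomial in $\mathrm{tr}(\gamma)$ with integer coefficients. Applying the same reasoning to pairs of elements $\gamma, \delta \in \Gamma^{(2)}$, and using the product identity above to express $\mathrm{tr}(\gamma)\mathrm{tr}(\delta)$ in terms of $\mathrm{tr}(\gamma\delta)$ and $\mathrm{tr}(\gamma\delta^{-1})$, one builds up enough polynomial relations over $k\Gamma'$ to force $\mathrm{tr}(\gamma) \in k\Gamma'$.

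The main obstacle is the last step: a single Chebyshev relation $\mathrm{tr}(\gamma^k) = p_k(\mathrm{tr}(\gamma))$ only shows $\mathrm{tr}(\gamma)$ is algebraic over $k\Gamma'$ of bounded degree, not that it actually lies in $k\Gamma'$. Reid's standard device for removing this ambiguity is to exploit Zariski density: the group $(\Gamma')^{(2)}$, being finite-index in the non-elementary group $\Gamma^{(2)}$, is still Zariski dense in $\mathrm{PSL}(2,\C)$, so the conjugates $\gamma^k \delta \gamma^{-k}$ for $\delta$ ranging over $(\Gamma')^{(2)}$ generate enough trace data that the linear relations coming from the product identity pin $\mathrm{tr}(\gamma)$ down inside $k\Gamma'$. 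An alternative is to use the equivalent characterization $k\Gamma = \mathbb{Q}\bigl(\mathrm{tr}(\gamma^2) : \gamma \in \Gamma\bigr)$ directly and work only with squares, which sidesteps some of the bookkeeping. Once both inclusions are established, $k\Gamma = k\Gamma'$, and combined with the reduction above we conclude $kM_1 = kM_2$.
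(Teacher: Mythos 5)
First, a point of reference: the paper does not prove this statement at all --- it is quoted from Neumann--Reid \cite{neumann1992arithmetic} without proof --- so your proposal has to stand on its own. The reduction you set up is sound: a common finite-sheeted cover realizes a group $\Delta$ as a finite-index subgroup of $\Gamma_1$ and of a conjugate of $\Gamma_2$; traces are conjugation-invariant; and the inclusion $k\Gamma' \subseteq k\Gamma$ for $\Gamma'$ of finite index in $\Gamma$ follows at once from $(\Gamma')^{(2)} \subseteq \Gamma^{(2)}$ and monotonicity of trace fields. Up to that point everything is correct.

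The genuine gap is exactly where you flag it, and your proposed repair does not close it. The reverse inclusion $k\Gamma \subseteq k\Gamma'$ is the entire mathematical content of the theorem --- it is precisely what fails for the ordinary trace field $K\Gamma$, which is \emph{not} a commensurability invariant --- and the argument you offer (pass to a power $\gamma^k \in (\Gamma')^{(2)}$, apply the Chebyshev recursion, then invoke Zariski density so that conjugates ``generate enough trace data'' to ``pin $\mathrm{tr}(\gamma)$ down'') asserts the conclusion rather than deriving it. Every relation you can extract from the identities $\mathrm{tr}(AB)+\mathrm{tr}(AB^{-1})=\mathrm{tr}(A)\mathrm{tr}(B)$ and $\mathrm{tr}(A^{n+1})=\mathrm{tr}(A)\mathrm{tr}(A^n)-\mathrm{tr}(A^{n-1})$ is a polynomial relation for $\mathrm{tr}(\gamma)$ over $k\Gamma'$, i.e.\ algebraicity; and adjoining elements $\delta \in (\Gamma')^{(2)}$ and expanding $\mathrm{tr}(\gamma)\mathrm{tr}(\delta)=\mathrm{tr}(\gamma\delta)+\mathrm{tr}(\gamma\delta^{-1})$ does not help unless you already know that $\mathrm{tr}(\gamma\delta)$ and $\mathrm{tr}(\gamma\delta^{-1})$ lie in $k\Gamma'$, which is the same problem over again. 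The actual proof (due to Reid; see Neumann--Reid, and Maclachlan--Reid, \emph{The Arithmetic of Hyperbolic 3-Manifolds}, \S 3.3) rests on structural results that are absent from your sketch: that for a finitely generated non-elementary group $\Gamma=\langle \gamma_1,\dots,\gamma_n\rangle$ the trace field is generated by the traces of the ordered products $\gamma_{i_1}\cdots\gamma_{i_r}$, and the identification
\begin{equation*}
k\Gamma \;=\; \mathbb{Q}\bigl(\mathrm{tr}^2(\gamma),\ \mathrm{tr}(\gamma)\,\mathrm{tr}(\delta)\,\mathrm{tr}(\gamma\delta)\ :\ \gamma,\delta\in\Gamma\bigr),
\end{equation*}
obtained by expanding traces of products of squares and tracking the parity of each generator's exponent. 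It is this explicit generating set, applied to a generating set of $\Gamma$ adapted to the finite-index subgroup (equivalently, the isomorphism of the invariant quaternion algebras $A\Gamma \cong A\Delta$), that upgrades ``algebraic over $k\Gamma'$'' to ``contained in $k\Gamma'$''. As written, the key inclusion is unproven; you should either reproduce that bookkeeping or cite it.
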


\begin{define}
Let $M$ be a cusped hyperbolic 3-manifold. The field generated by the cusp shapes of all the cusps of $M$ is called the
\emph{cusp field} of M, $cM.$
 
\end{define}

It follows from results of Neumann-Reid in \cite{neumann1992arithmetic} that $cM$ is contained in $kM$ and is a commensurability invariant. It is often the case that for a link complement $M$, $cM = kM$. 

The polynomials we derive from the T-T method in terms of intercusp and translational parameters play a central role in studying the invariant trace fields of FAL complements.

If the images of the intercusp geodesics and translational geodesics are embedded in $M$, in which case  we call them \emph{intercusp arcs} and \emph{cusp arcs} (respectively) then the following theorem holds.
\begin{theorem}\label{nt}\cite{nt}
Suppose $X \subset M$ is a union of cusp arcs and pairwise disjoint intercusp
arcs, where any intercusp arcs which are not disjoint have been bent slightly
near intersection points to make them disjoint, and suppose $\pi_1(X) \rightarrow \pi_1(M)$ is
surjective. Then the intercusp and
translation parameters corresponding to these arcs generate $kM$.

\end{theorem}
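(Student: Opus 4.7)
The plan is to prove both inclusions in $k\Gamma=\mathbb{Q}(\{\omega_i,u_j\})$. For the containment $K\Gamma\subseteq\mathbb{Q}(\{\omega_i,u_j\})$, I would use the surjectivity hypothesis $\pi_1(X)\twoheadrightarrow\pi_1(M)=\Gamma$: every $\gamma\in\Gamma$ is represented by a loop in $X$ that alternates cusp arcs and intercusp arcs. By Culler's theorem, the holonomy representation lifts to $SL(2,\mathbb{C})$, and such a loop then becomes a product of matrices of the form $W_i=\begin{bmatrix}0&\omega_i\\1&0\end{bmatrix}$ and $U_j=\begin{bmatrix}1&\epsilon_j u_j\\0&1\end{bmatrix}$ from Proposition~\ref{matrixprop}, rescaled to have determinant~$1$. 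The rescaling introduces scalars $\sqrt{-\omega_i}$, but in any closed loop these square roots appear in matched pairs and cancel, so the trace of any such product is a rational function with integer coefficients in the $\omega_i$ and $u_j$. Hence $K\Gamma\subseteq\mathbb{Q}(\{\omega_i,u_j\})$, and a fortiori $k\Gamma\subseteq\mathbb{Q}(\{\omega_i,u_j\})$.

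For the reverse inclusion, each parameter must be produced inside the invariant trace field $k\Gamma$. Short matrix computations show, for example, that $\mathrm{tr}(W_iU_j)=u_j$ and $\mathrm{tr}((W_iU_j)^2)=2\omega_i+u_j^2$ (working unnormalized; the analogous expressions after rescaling determine the same field). Combined with Neumann's characterization $k\Gamma=\mathbb{Q}(\{\mathrm{tr}(g)^2:g\in\Gamma\})$ and the identity $\mathrm{tr}(g^2)=\mathrm{tr}(g)^2-2$, these calculations place each $\omega_i$ and $u_j$ in $k\Gamma$, provided the resulting polynomial relations can be solved for the individual parameters.

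The main obstacle is precisely that last point: a single trace mixes several parameters at once, so one must select a collection of loops in $X$ whose associated squared-trace polynomials form an invertible system in the parameters. The natural strategy is inductive---fix a basepoint cusp and a spanning tree of cusps inside $X$, then process the remaining arcs one at a time, at each step choosing a loop whose squared trace involves the new parameter and only previously-determined ones. The surjectivity of $\pi_1(X)\twoheadrightarrow\Gamma$, together with the fact that the cusp and intercusp arcs form the $1$-skeleton of a cell structure on $M$, ensures that enough loops are available for this inductive recovery to terminate with every parameter expressed in $k\Gamma$, completing the equality.
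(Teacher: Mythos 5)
First, a point of order: the paper does not prove this statement --- it is quoted from Neumann--Tsvietkova \cite{nt} and used as a black box --- so there is no in-paper proof to compare against; your sketch can only be measured against the source. Your forward inclusion is close to the right idea but the bookkeeping fails. Each matrix $W_i$ has determinant $-\omega_i$, so normalizing the holonomy of a loop that traverses arcs $\gamma_{i_1},\dots,\gamma_{i_k}$ requires dividing by $\sqrt{\prod_j(-\omega_{i_j})}$; a closed loop can perfectly well traverse an arc exactly once, so these square roots do \emph{not} occur in matched pairs and do not cancel. What does survive is $\mathrm{tr}(\gamma)^2=\mathrm{tr}(A)^2/\det(A)$ for the unnormalized product $A$, which is rational in the parameters; combined with the standard fact $k\Gamma=\mathbb{Q}(\{\mathrm{tr}(\gamma)^2:\gamma\in\Gamma\})$ this yields $k\Gamma\subseteq\mathbb{Q}(\{\omega_i,u_j\})$. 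Your stronger claim $K\Gamma\subseteq\mathbb{Q}(\{\omega_i,u_j\})$ is not established by this argument and is not what the theorem asserts.

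The reverse inclusion is where the genuine gap lies. The products $W_iU_j$ whose traces you compute are holonomies of \emph{paths} running from one cusp to another; unless the intercusp arc happens to return to the same cusp, $W_iU_j$ is not a closed loop, hence not an element of $\Gamma$, and its ``trace'' has no a priori relation to $K\Gamma$ or $k\Gamma$. Only words corresponding to closed loops are usable, each such squared trace mixes several parameters at once, and the step you defer --- selecting loops whose squared-trace relations form an ``invertible system'' --- is precisely the content of the theorem. Even granting such a system, inverting polynomial (rather than linear) relations would a priori only place the parameters in a finite extension of $k\Gamma$, not in $k\Gamma$ itself. Neumann--Tsvietkova sidestep this entirely: they invoke the result of Neumann--Reid \cite{neumann1992arithmetic} that for a cusped $M$ the group $\Gamma^{(2)}$ may be conjugated into $\mathrm{PSL}(2,kM)$ with all parabolic fixed points in $kM\cup\{\infty\}$, and then observe that each $\omega_i$ and $u_j$ is expressible in the matrix entries and fixed points of such a normalization, hence already lies in $kM$. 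Without an ingredient of that strength, your inductive recovery has no foundation.
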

\begin{theorem}\label{cm}
Let $M$ be a FAL complement then $cM = kM.$
\end{theorem}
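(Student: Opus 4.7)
The plan is to prove the inclusion $kM\subseteq cM$; the reverse inclusion $cM\subseteq kM$ is already known from work of Neumann--Reid cited before the statement. The strategy is to show that every intercusp parameter $\omega_i$ and every translational parameter $u_j$ arising from the T-T equations for $L$ in fact lies in $cM$, and then invoke Theorem \ref{nt} to conclude these parameters generate $kM$.

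First I would produce the $\omega_i$'s from cusp shapes. By Theorem \ref{mainthm}, for each crossing circle $C$ with crossing parameter $\omega$, the cusp shape of $C$ is one of
\[
4\omega,\qquad \frac{4\omega}{1+2\omega},\qquad \frac{4\omega}{1-2\omega},
\]
depending on the absence or type of half-twist at $C$. Each of these is an integral Möbius transformation of $\omega$, so $\omega$ is recovered as a rational function with $\mathbb{Z}$-coefficients of the cusp shape. Hence $\omega_i\in cM$ for every crossing circle of $L$.

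Next I would produce the $u_j$'s. Two sources of relations are available. The thrice-punctured-sphere analysis in Theorem \ref{tps} and its proof expresses every translational parameter on an edge incident to a crossing circle in the form $u=\pm 2\omega$, so such $u_j$'s immediately lie in $cM$. The remaining translational parameters, those on edges of $T_L$ coming from the projection plane, are controlled by Theorem \ref{mainthm2} (and the half-twist variant in the remark following it): each projection-plane cusp shape is $\sum u_i$ (up to a rational correction when half-twists are present), and combined with the linear relations from Lemma \ref{edge} and the shape-parameter equations of Propositions \ref{shapeparameterprop} and \ref{matrixprop}, this lets one solve for all remaining $u_j$'s as rational functions in the already-established $\omega_i$'s and cusp shapes. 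Thus $u_j\in cM$ for all $j$.

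Finally I would invoke Theorem \ref{nt}. The edges of the polyhedron $P_L$ supply a natural family of intercusp arcs (one per crossing in the diagram) and, together with a cusp arc along each cusp connecting consecutive intercusp arcs, give a one-complex $X\subset M$ whose $\pi_1$ surjects onto $\pi_1(M)$: this surjectivity follows because $P_L$ is simply connected, so loops in $M$ can be homotoped into the $1$-skeleton of the polyhedral decomposition, and the face-pairings of $P_L$ are encoded by traveling along these intercusp and cusp arcs. Theorem \ref{nt} then yields $kM=\mathbb{Q}(\{\omega_i, u_j\})\subseteq cM$, and combined with $cM\subseteq kM$ gives the equality.

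The main obstacle I expect is the book-keeping step in the third paragraph: confirming that the chosen $X$ really realises the intercusp and translational parameters coming from the T-T method as embedded intercusp and cusp arcs (with any non-disjoint intercusp arcs perturbed as in Theorem \ref{nt}) and verifying the surjectivity $\pi_1(X)\twoheadrightarrow\pi_1(M)$ cleanly in the presence of half-twists, where the shear described in Section~\ref{sec:cusp} must be accounted for so that no parameter is lost.
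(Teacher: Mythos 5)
Your proposal is correct and follows essentially the same route as the paper: build the complex $X$ of cusp and intercusp arcs, apply Theorem \ref{nt} to get $kM=\mathbb{Q}(\{\omega_i,u_j\})$, and use Theorems \ref{mainthm} and \ref{mainthm2} to identify this field with $cM$. Your explicit inversion of the M\"obius formulas $4\omega$, $4\omega/(1\pm 2\omega)$ to place each $\omega_i$ (and hence each $u_j=\pm 2\omega_i$) inside $cM$ makes the inclusion $kM\subseteq cM$ cleaner than the paper's phrasing, but it is the same argument; the paper justifies surjectivity of $\pi_1(X)\to\pi_1(M)$ by checking that all meridians of the diagram are realized, which is equivalent to your appeal to the polyhedral decomposition.
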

\begin{proof}
$kM$ is generated by all the meridian curves of the overstrands of the link diagram. Let $X$ be the union of cusp arcs and pairwise disjoint intercusp arcs, see Figure \ref{mu}. The meridians are all realized by the translation parameters, while the intercusp geodesics ensure that $\overset{~}{X}$ is connected. For FALs the intercusp parameters $\omega_i$ and the translational parameters $u_j$ are parameters of the intercusp arcs and cusp arcs, respectively, since FALs decompose into totally geodesic polyhedra. To show that $\pi_1(X) \rightarrow \pi_1(M)$ is surjective, we need to see that all the meridians are included. This is quite explicit, see Figure \ref{mu}.  The meridians for the crossing circles, the meridians for the components in the projection plane, and the meridian that runs around the crossing circle in the projection plane are all combinations of the intercusp and translation parameters. By Theorem \ref{nt}, $\omega_i$s and the $u_j$s generate $kM$.  Moreover by Theorems \ref{mainthm} and \ref{mainthm2} $\omega_i$s and $u_j$s generate the cusp field, thus $cM = kM.$ 
\end{proof}
\begin{figure}
    \centering
    \begin{tikzpicture}[scale=.75,line width=1.7, line cap=round, line join=round]

  \draw[red, dashed] (2.65,3.5) circle (.56cm);
  
  \draw[red, dashed] (7.5,3.5) circle (.63cm);
    \draw[red, dashed] (.8,3.2) circle (.43cm);

    \draw (8.13, 1.71) .. controls (9.08, 1.72) and (9.74, 2.63) .. 
          (9.69, 3.63) .. controls (9.64, 4.56) and (9.34, 5.53) .. (8.52, 5.52);
    \draw (8.14, 5.52) .. controls (8.01, 5.52) and (7.89, 5.52) .. (7.77, 5.52);
    \draw (7.50, 5.52) .. controls (7.35, 5.51) and (7.20, 5.51) .. (7.05, 5.51);
    \draw (6.61, 5.51) .. controls (5.61, 5.50) and (4.62, 5.49) .. (3.63, 5.49);
    \draw (3.19, 5.48) .. controls (3.05, 5.48) and (2.92, 5.48) .. (2.78, 5.48);
    \draw (2.53, 5.48) .. controls (2.41, 5.48) and (2.30, 5.48) .. (2.18, 5.48);
    \draw (1.80, 5.47) .. controls (1.16, 5.47) and (0.73, 4.83) .. (0.74, 4.14);
    \draw (0.74, 4.14) .. controls (0.75, 3.80) and (0.75, 3.46) .. (0.76, 3.12);
    \draw (0.77, 2.61) .. controls (0.78, 2.01) and (1.37, 1.61) .. (2.01, 1.62);
    \draw (2.01, 1.62) .. controls (2.42, 1.63) and (2.83, 1.63) .. (3.24, 1.64);
    \draw (3.24, 1.64) .. controls (4.40, 1.66) and (5.57, 1.67) .. (6.74, 1.69);
    \draw (6.74, 1.69) .. controls (7.20, 1.70) and (7.67, 1.70) .. (8.13, 1.71);

 \draw[dashed, blue] (5.2,3.5) ellipse (4cm and 1.7cm);

   \draw (2.09, 7.10) .. controls (2.09, 6.56) and (2.08, 6.02) .. (2.07, 5.48);
    \draw (2.07, 5.48) .. controls (2.07, 5.31) and (2.07, 5.15) .. (2.06, 4.98);
    \draw (2.06, 4.98) .. controls (2.05, 3.82) and (2.04, 3.07) .. (2.03, 2.33);
    \draw (2.02, 1.95) .. controls (2.02, 1.87) and (2.02, 1.79) .. (2.02, 1.70);
    \draw (2.01, 1.36) .. controls (2.01, 1.02) and (2.00, 0.67) .. (2.00, 0.33);
    \draw (3.21, 0.39) .. controls (3.22, 0.73) and (3.22, 1.06) .. (3.23, 1.39);
    \draw (3.24, 1.72) .. controls (3.24, 1.80) and (3.24, 1.88) .. (3.25, 1.96);
    \draw (3.25, 2.34) .. controls (3.27, 3.09) and (3.29, 3.84) .. (3.31, 4.60);
    \draw (3.32,4.6) .. controls (3.32, 5.17) and (3.33, 5.33) .. (3.33, 5.48);
    \draw (3.33, 5.48) .. controls (3.34, 6.03) and (3.36, 6.58) .. (3.37, 7.13);
 
    \draw (6.97, 7.07) .. controls (6.95, 6.55) and (6.93, 6.03) .. (6.90, 5.51);
    \draw (6.90, 5.51) .. controls (6.90, 5.37) and (6.89, 5.37) .. (6.89, 5.09);
    \draw (6.87,5.09) .. controls (6.83, 3.91) and (6.80, 3.14) .. (6.77, 2.37);
    \draw (6.75, 2.00) .. controls (6.75, 1.92) and (6.75, 1.84) .. (6.74, 1.77);
    \draw (6.73, 1.40) .. controls (6.71, 1.01) and (6.69, 0.62) .. (6.68, 0.23);
    \draw (8.31, 7.13) .. controls (8.29, 6.60) and (8.28, 6.06) .. (8.26, 5.52);
    \draw (8.26, 5.52) .. controls (8.25, 5.39) and (8.25, 5.39) .. (8.25, 5.12);
    \draw (8.23, 5.12) .. controls (8.21, 3.94) and (8.18, 3.16) .. (8.16, 2.38);
    \draw (8.14, 2.01) .. controls (8.14, 1.94) and (8.14, 1.86) .. (8.14, 1.79);
    \draw (8.12, 1.42) .. controls (8.11, 1.01) and (8.10, 0.60) .. (8.08, 0.20);
 
  \draw[dashed, blue] (7.6,7.2) -- (7.5, 3.1);
  \draw[dashed, blue] (7.45, 2.7)--(7.42,2.2);
  
   \draw[dashed, blue] (7.39, 1.6)--(7.35,0.2);

  \draw[dashed, blue] (2.65,7.2) -- (2.6, 3.1);
  \draw[dashed, blue] (2.58, 2.7)--(2.55,2.2);
  
   \draw[dashed, blue] (2.53, 1.6)--(2.5,0.2); 
   
 \draw[red]  (1.2,3.4)--(2,3.4);
  \draw[red  ]  (3.34,3.4)--(6.7,3.4);
    \draw[red]  (8.3,3.4)--(9.1,3.4);

\end{tikzpicture}
    \caption[Meridians and Longitudes]{Meridians (red dashed curves), longitude curves (blue dashed curves), and intercusp geodesics (red solid lines)}
    \label{mu}
\end{figure}
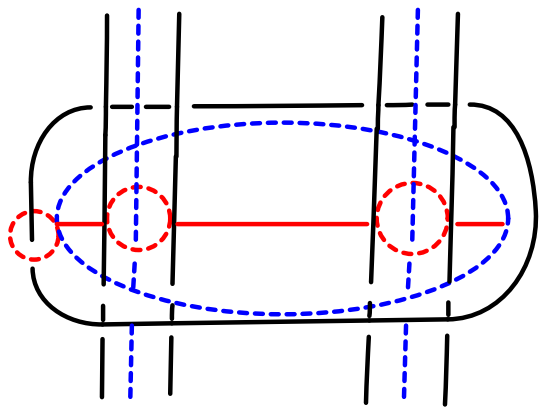
\begin{theorem}\label{km}
Let $L_1$ and $L_2$ be FAL that differ in half-twists and let $M_i = S^3 - L_i$, then $kM_1 = kM_2.$ 
\end{theorem}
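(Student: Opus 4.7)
The plan is to observe that invariance of the invariant trace field under changes of half-twists follows essentially for free from the extension of the T-T method developed in \S\ref{sec:TT-FAL}, combined with Theorem \ref{nt}. Purcell's polyhedral decomposition $P_L$ depends only on the underlying FAL (without half-twists), so $L_1$ and $L_2$ give rise to the same truncated polyhedron $T_L$; the half-twists change only how shaded faces of the two copies of $P_L$ are identified, not the geometry of the polyhedron itself. Consequently the T-T equations coming from the white faces and from the thrice-punctured sphere computations of Theorem \ref{tps} are identical for $M_1$ and $M_2$, and therefore the intercusp parameters $\omega_i$ and translational parameters $u_j$ solving the T-T system are the same complex numbers in both cases.

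Next I would invoke Theorem \ref{nt}. The same set $X$ built from cusp arcs and intercusp arcs used in the proof of Theorem \ref{cm} works for both $M_1$ and $M_2$: the arcs live inside the common polyhedral decomposition, and the same argument given there shows that $\pi_1(X)\to\pi_1(M_i)$ is surjective for $i=1,2$. Since the intercusp and translational parameters attached to $X$ generate $kM_i$ by Theorem \ref{nt}, and these parameters coincide for the two links, we conclude
\[
kM_1 \;=\; \mathbb{Q}\bigl(\{\omega_i\}, \{u_j\}\bigr) \;=\; kM_2.
\]

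As a sanity check one can also verify this via the cusp fields: by Theorem \ref{cm} we have $cM_i = kM_i$, and Theorem \ref{mainthm} shows that the three possible cusp shapes $4\omega$, $\tfrac{4\omega}{1+2\omega}$, $\tfrac{4\omega}{1-2\omega}$ at a crossing circle all lie in the same field $\mathbb{Q}(\omega)$; similarly Theorem \ref{mainthm2} together with the remark on half-twists shows that the cusp shapes of projection-plane components change only by half-integer shifts, which do not enlarge the field.

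The only subtle point, and the step I would be most careful with, is the surjectivity hypothesis of Theorem \ref{nt} in the presence of half-twists. A meridian of a crossing-circle cusp after a half-twist gluing is represented by a diagonal curve in the fundamental parallelogram (as in Figures \ref{tpsrhs'} and \ref{tpslhs'}) rather than by the horizontal edge; I would verify explicitly that this diagonal still decomposes as a concatenation of cusp arcs and intercusp arcs in $X$, so that every peripheral generator of $\pi_1(M_i)$ is indeed hit. Once this is checked, the argument above gives $kM_1 = kM_2$.
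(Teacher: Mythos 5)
Your proposal is correct, but your primary argument is genuinely different from the paper's. The paper's proof of Theorem \ref{km} is a one-line appeal to Theorem \ref{mainthm} combined (implicitly) with Theorem \ref{cm}: the crossing-circle cusp shapes $4\omega$ and $\frac{4\omega}{1\pm 2\omega}$ differ by an integral M\"obius transformation, hence generate the same field, so $kM_1 = cM_1 = cM_2 = kM_2$. That is exactly your ``sanity check,'' and in fact you carry it out more carefully than the paper does, since you also account for the projection-plane cusps (whose shapes change only by half-integer shifts under half-twists) --- a point the paper's proof silently omits even though the cusp field is generated by \emph{all} cusp shapes. Your main line of argument instead bypasses cusp shapes entirely: since $L_1$ and $L_2$ share the polyhedron $P_L$ and hence the trivalent graph $T_L$, the T-T system and its solutions $\{\omega_i\},\{u_j\}$ coincide, and Theorem \ref{nt} then identifies both invariant trace fields with $\mathbb{Q}(\{\omega_i\},\{u_j\})$. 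This is more self-contained and treats all cusps uniformly, at the cost of having to re-verify the surjectivity hypothesis of Theorem \ref{nt} for the half-twisted gluing --- the one step you rightly flag as needing an explicit check (the diagonal meridian of a half-twisted crossing circle is still a concatenation of cusp and intercusp arcs, so this does go through, and the proof of Theorem \ref{cm} is stated for general FALs anyway). Since your fallback argument is complete and is the paper's actual proof, the acknowledged gap in the primary route does not undermine the result.
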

\begin{proof}
This follows from Theorem \ref{mainthm}, the cusp shapes for FAL complements differing in half-twists have cusp shapes $4\omega$ and $\frac{4\omega}{1\pm2\omega}$ respectively, generating the same field. 
\end{proof}
FAL complements that differ in half-twists have the same volume and the same invariant trace fields, but are not isometric.
\begin{corollary}
There exists an arbitrarily large set of links with complements having the same volume and same invariant trace fields yet are not isometric. 
\end{corollary}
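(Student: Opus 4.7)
The plan is, for each integer $N\ge 1$, to exhibit a set of at least $N$ pairwise non-isometric FAL complements sharing a common volume and a common invariant trace field, and then let $N\to\infty$. First I would fix a hyperbolic FAL diagram $L$ with $n\ge N-1$ crossing circles, chosen so that its diagrammatic symmetry forces all crossing-circle T-T parameters $\omega_i$ to equal a common value $\omega$; the $n$-cycle analogue of the $3$-pretzel FAL of \S 4 is a convenient family, and its symmetry together with the T-T equations of \S 3 yield $\omega=\pm i/4$ exactly as in the $3$-pretzel computation. For each $k\in\{0,1,\dots,n\}$, let $L_k$ denote the FAL obtained from $L$ by inserting a half-twist at $k$ prescribed crossing circles and leaving the remaining $n-k$ half-twist free.

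Two of the three required properties are immediate from the results already established. By Proposition \ref{PLprop} together with the gluing description recalled in \S 2, every $L_k$ admits the same polyhedral decomposition into two copies of the right-angled polyhedron $P_L$; half-twists only change the identifications on the shaded faces. Consequently
$$\mathrm{vol}(S^3\setminus L_k)\;=\;2\,\mathrm{vol}(P_L)$$
is independent of $k$. Theorem \ref{km}, applied pairwise to $L_k$ and $L_{k'}$, shows that all $L_k$ share the same invariant trace field.

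The remaining task is to show that the $L_k$ are pairwise non-isometric. Here I would use Theorem \ref{mainthm} to read off the crossing-circle cusp shapes directly from the diagram: since the parameters $\omega_i$ are determined by $P_L$, hence common to all $L_k$, the link $L_k$ has exactly $n-k$ crossing-circle cusps of shape $4\omega$ and exactly $k$ of shape $\dfrac{4\omega}{1\pm 2\omega}$. Any isometry $S^3\setminus L_k\xrightarrow{\;\sim\;}S^3\setminus L_{k'}$ must permute cusps and carry each cusp shape to an $\mathrm{SL}(2,\mathbb{Z})$-equivalent shape. Hence if the two values $4\omega$ and $\dfrac{4\omega}{1\pm 2\omega}$ lie in different $\mathrm{SL}(2,\mathbb{Z})$-orbits in $\mathbb{H}^2$, the multiset of crossing-circle cusp shapes is an isometry invariant that depends on $k$, forcing the $n+1$ manifolds $L_0,L_1,\dots,L_n$ to be pairwise non-isometric.

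The main obstacle is verifying this $\mathrm{SL}(2,\mathbb{Z})$-orbit separation. I would handle it by a direct computation in the fundamental domain, mirroring the Borromean/$3$-pretzel examples of \S 4: plugging $\omega=\pm i/4$ gives $4\omega=\pm i$, already in the standard fundamental domain, while a short reduction shows that $\dfrac{4\omega}{1-2\omega}$ has orbit representative $-\tfrac12+i$, which is a distinct point of the fundamental domain. Thus the two orbits are genuinely different, the $L_k$ are pairwise non-isometric, and letting $n\to\infty$ produces arbitrarily large sets of non-isometric FAL complements with equal volume and equal invariant trace field, proving the corollary.
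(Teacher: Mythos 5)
Your construction (the family $L_k$ obtained by inserting half-twists at $k$ crossing circles of a fixed FAL) and your verification of equal volume (identical polyhedral decomposition) and equal invariant trace field (Theorem \ref{km}) match the paper, which uses $FALP_n$ with $0,1,\dots,n-1$ half-twists. The divergence, and the gap, is in the non-isometry step. You argue that the multiset of \emph{crossing-circle} cusp shapes, taken modulo $\mathrm{SL}(2,\mathbb{Z})$, distinguishes $L_k$ from $L_{k'}$. But an isometry of the complements is only guaranteed to preserve the multiset of \emph{all} cusp shapes (mod $\mathrm{SL}(2,\mathbb{Z})$ and complex conjugation); nothing intrinsic singles out the crossing-circle cusps, so a priori a crossing-circle cusp of $L_k$ could be matched with a projection-plane cusp of $L_{k'}$. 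Since the projection-plane cusp shapes also change when half-twists are inserted (they acquire shears of half-integer multiples of the meridian; see the remark after Theorem \ref{mainthm2}), you would have to compute those as well and compare full multisets, which you do not do. Your $\mathrm{SL}(2,\mathbb{Z})$-orbit computation itself is fine ($4\omega=\pm i$ versus $\frac{4\omega}{1-2\omega}=-\tfrac{2}{5}+\tfrac{4}{5}i\sim -\tfrac12+i$, and $c^2+d^2=\tfrac54$ has no integer solutions), but as it stands it only cleanly separates $L_0$ from the $L_k$ with $k\ge 1$, not the latter from each other.

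The paper avoids all of this with a one-line observation that your own setup already supplies: in the pretzel family each half-twist merges two link components, so $FALP_n$ with $k$ half-twists has $2n-k$ components, and the number of cusps is an isometry invariant of the complement. Distinct $k$ give distinct cusp counts, hence non-isometric complements. To repair your argument, either add the projection-plane cusp shapes to the multiset comparison, or simply replace the whole cusp-shape step by the cusp count.
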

\begin{figure}
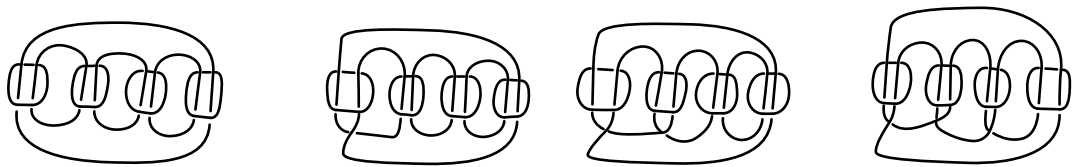

    \centering
    \include{f41}
    \caption{Non-isometric links $FALP_4$}
    \label{f4}
\end{figure}
\begin{proof}
The class of fully augmented pretzel links called $FALP_n$ have number of components ranging from $n+1$ to $2n$ depending on half-twists, see Figure \ref{FALP}(a). $FALP_n$ without half-twists have $2n$ components, for each half-twist added the number of components decrease by 1. $FALP_n$ with $n-1$ half-twists will have $n+1$ components, see Figure \ref{f4}. 
All of these links  have the same volume as they decompose into the same ideal polyhedra, but are obtained by different gluings on the bowties. In addition, they have the same invariant trace field by Theorem 5.7. However, they are non-isometric since they have different number of cusps which is an invariant  of the link complement. 
\end{proof}
\begin{remark}
A very interesting question to study is the commensurability of these links.  What happens to the commensurability of FAL when we add half-twists?
\end{remark}
\subsection{Commensurability of Pretzel FALs}
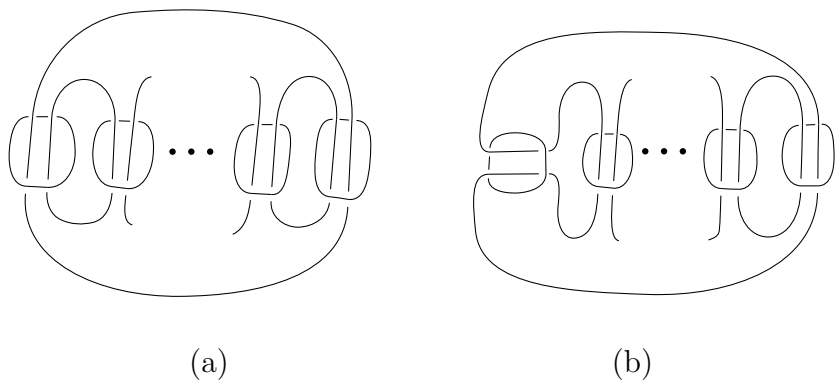
\begin{figure}[h]
  \centering  
    \begin{tikzpicture}[every node/.style={scale=0.6}, scale=.5][line width=3.8, line cap=round, line join=round]
    \draw (0.67, 4.93) .. controls (0.29, 4.93) and (0.19, 4.45) .. 
          (0.16, 4.02) .. controls (0.13, 3.57) and (0.23, 3.10) .. (0.61, 3.08);
    \draw (0.61, 3.08) .. controls (0.80, 3.07) and (0.99, 3.06) .. (1.17, 3.05);
    \draw (1.17, 3.05) .. controls (1.62, 3.03) and (1.91, 3.48) .. 
          (1.91, 3.97) .. controls (1.91, 4.43) and (1.81, 4.93) .. (1.42, 4.93);
    \draw (1.19, 4.93) .. controls (1.09, 4.93) and (0.99, 4.93) .. (0.88, 4.93);
    \draw (2.89, 3.07) .. controls (2.51, 3.11) and (2.37, 3.56) .. 
          (2.38, 3.98) .. controls (2.40, 4.39) and (2.51, 4.83) .. (2.86, 4.82);
    \draw (3.07, 4.82) .. controls (3.17, 4.82) and (3.27, 4.81) .. (3.37, 4.81);
    \draw (3.59, 4.80) .. controls (3.96, 4.79) and (4.03, 4.31) .. 
          (3.98, 3.87) .. controls (3.94, 3.41) and (3.68, 2.97) .. (3.28, 3.02);
    \draw (3.28, 3.02) .. controls (3.15, 3.04) and (3.02, 3.05) .. (2.89, 3.07);
    \draw (6.59, 2.89) .. controls (6.21, 2.91) and (6.10, 3.38) .. 
          (6.15, 3.82) .. controls (6.19, 4.25) and (6.29, 4.72) .. (6.67, 4.72);
    \draw (6.87, 4.72) .. controls (6.97, 4.72) and (7.07, 4.72) .. (7.16, 4.72);
    \draw (7.35, 4.72) .. controls (7.71, 4.72) and (7.65, 4.22) .. 
          (7.60, 3.78) .. controls (7.55, 3.34) and (7.48, 2.83) .. (7.13, 2.86);
    \draw (7.13, 2.86) .. controls (6.95, 2.87) and (6.77, 2.88) .. (6.59, 2.89);
    \draw (8.76, 4.86) .. controls (8.38, 4.88) and (8.34, 4.34) .. 
          (8.30, 3.87) .. controls (8.26, 3.39) and (8.27, 2.85) .. (8.68, 2.80);
    \draw (8.68, 2.80) .. controls (8.85, 2.78) and (9.01, 2.76) .. (9.17, 2.73);
    \draw (9.17, 2.73) .. controls (9.60, 2.68) and (9.71, 3.23) .. 
          (9.75, 3.74) .. controls (9.79, 4.24) and (9.82, 4.81) .. (9.39, 4.83);
    \draw (9.19, 4.84) .. controls (9.11, 4.85) and (9.03, 4.85) .. (8.94, 4.86);
    \draw (2.97, 4.82) .. controls (2.99, 5.37) and (2.69, 5.89) .. 
          (2.19, 5.92) .. controls (1.69, 5.95) and (1.33, 5.47) .. (1.29, 4.93);
    \draw (1.29, 4.93) .. controls (1.26, 4.37) and (1.22, 3.82) .. (1.19, 3.26);
    \draw (1.16, 2.92) .. controls (1.14, 2.62) and (1.26, 2.31) .. 
          (1.53, 2.19) .. controls (1.80, 2.06) and (2.11, 2.05) .. 
          (2.39, 2.12) .. controls (2.72, 2.20) and (2.87, 2.55) .. (2.89, 2.90);
    \draw (2.90, 3.27) .. controls (2.92, 3.79) and (2.95, 4.31) .. (2.97, 4.82);
    \draw (8.86, 4.86) .. controls (8.91, 5.43) and (8.65, 5.99) .. 
          (8.15, 6.00) .. controls (7.58, 6.01) and (7.30, 5.36) .. (7.26, 4.72);
    \draw (7.26, 4.72) .. controls (7.22, 4.17) and (7.18, 3.62) .. (7.14, 3.06);
    \draw (7.12, 2.68) .. controls (7.09, 2.29) and (7.44, 1.99) .. 
          (7.84, 1.99) .. controls (8.25, 2.00) and (8.63, 2.25) .. (8.67, 2.64);
    \draw (8.70, 3.00) .. controls (8.75, 3.62) and (8.81, 4.24) .. (8.86, 4.86);
    \draw (0.78, 4.93) .. controls (0.91, 6.40) and (2.04, 7.59) .. 
          (3.50, 7.71) .. controls (4.91, 7.83) and (6.33, 7.81) .. 
          (7.68, 7.39) .. controls (8.74, 7.06) and (9.33, 5.97) .. (9.27, 4.84);
    \draw (9.27, 4.84) .. controls (9.24, 4.21) and (9.21, 3.57) .. (9.18, 2.94);
    \draw (9.16, 2.53) .. controls (9.07, 0.68) and (6.81, 0.17) .. 
          (4.70, 0.15) .. controls (2.55, 0.13) and (0.43, 1.06) .. (0.59, 2.88);
    \draw (0.63, 3.29) .. controls (0.68, 3.84) and (0.73, 4.38) .. (0.78, 4.93);
    \draw (3.94, 5.99) .. controls (3.59, 5.96) and (3.52, 5.32) .. (3.47, 4.81);
    \draw (3.47, 4.81) .. controls (3.41, 4.28) and (3.35, 3.75) .. (3.30, 3.23);
    \draw (3.26, 2.84) .. controls (3.22, 2.51) and (3.18, 2.12) .. (3.43, 2.05);
    \draw (6.10, 1.80) .. controls (6.43, 1.95) and (6.53, 2.34) .. (6.57, 2.71);
    \draw (6.61, 3.10) .. controls (6.67, 3.64) and (6.72, 4.18) .. (6.78, 4.72);
    \draw (6.78, 4.72) .. controls (6.84, 5.28) and (6.91, 5.96) .. (6.56, 5.99);
\fill [black] (5,4) circle (.085 cm);
\fill [black] (4.5,4) circle (.085 cm);

\fill [black] (5.5,4) circle (.085 cm);
\end{tikzpicture}
\hspace{1cm}
\begin{tikzpicture}[every node/.style={scale=0.6}, scale=.5][line width=3.8, line cap=round, line join=round]
    \draw (2.15, 4.01) .. controls (2.15, 4.33) and (1.78, 4.47) .. 
          (1.42, 4.49) .. controls (1.06, 4.51) and (0.69, 4.40) .. (0.67, 4.09);
    \draw (0.66, 3.86) .. controls (0.65, 3.75) and (0.65, 3.63) .. (0.64, 3.51);
    \draw (0.63, 3.28) .. controls (0.62, 2.97) and (1.01, 2.88) .. 
          (1.39, 2.89) .. controls (1.77, 2.91) and (2.16, 3.06) .. (2.16, 3.40);
    \draw (2.16, 3.40) .. controls (2.16, 3.61) and (2.15, 3.81) .. (2.15, 4.01);
    \draw (3.55, 3.06) .. controls (3.24, 3.09) and (3.11, 3.45) .. 
          (3.14, 3.80) .. controls (3.18, 4.12) and (3.26, 4.48) .. (3.55, 4.47);
    \draw (3.72, 4.46) .. controls (3.80, 4.46) and (3.88, 4.46) .. (3.96, 4.46);
    \draw (4.13, 4.45) .. controls (4.43, 4.44) and (4.47, 4.05) .. 
          (4.46, 3.70) .. controls (4.45, 3.34) and (4.26, 2.98) .. (3.94, 3.02);
    \draw (3.94, 3.02) .. controls (3.81, 3.03) and (3.68, 3.05) .. (3.55, 3.06);
    \draw (6.82, 3.01) .. controls (6.48, 3.02) and (6.36, 3.43) .. 
          (6.36, 3.82) .. controls (6.37, 4.19) and (6.44, 4.60) .. (6.75, 4.59);
    \draw (6.94, 4.58) .. controls (7.04, 4.58) and (7.13, 4.57) .. (7.23, 4.57);
    \draw (7.41, 4.56) .. controls (7.73, 4.54) and (7.77, 4.13) .. 
          (7.77, 3.75) .. controls (7.78, 3.36) and (7.61, 2.97) .. (7.27, 2.99);
    \draw (7.27, 2.99) .. controls (7.12, 2.99) and (6.97, 3.00) .. (6.82, 3.01);
    \draw (8.94, 3.08) .. controls (8.59, 3.09) and (8.44, 3.49) .. 
          (8.44, 3.88) .. controls (8.45, 4.26) and (8.54, 4.67) .. (8.88, 4.69);
    \draw (9.07, 4.70) .. controls (9.15, 4.70) and (9.24, 4.71) .. (9.32, 4.71);
    \draw (9.49, 4.72) .. controls (9.80, 4.73) and (9.81, 4.29) .. 
          (9.82, 3.90) .. controls (9.83, 3.50) and (9.73, 3.07) .. (9.38, 3.08);
    \draw (9.38, 3.08) .. controls (9.23, 3.08) and (9.09, 3.08) .. (8.94, 3.08);
    \draw (9.41, 4.71) .. controls (9.43, 6.49) and (7.29, 7.12) .. 
          (5.25, 7.16) .. controls (3.31, 7.21) and (1.05, 7.26) .. 
          (0.63, 5.62) .. controls (0.44, 4.86) and (0.21, 3.98) .. (0.67, 3.98);
    \draw (0.67, 3.98) .. controls (1.10, 3.99) and (1.53, 4.00) .. (1.96, 4.01);
    \draw (2.23, 4.01) .. controls (2.52, 4.02) and (2.52, 4.51) .. 
          (2.51, 4.92) .. controls (2.51, 5.38) and (2.71, 5.83) .. 
          (3.11, 5.84) .. controls (3.66, 5.85) and (3.68, 5.12) .. (3.64, 4.47);
    \draw (3.64, 4.47) .. controls (3.61, 4.06) and (3.59, 3.66) .. (3.56, 3.25);
    \draw (3.54, 2.87) .. controls (3.51, 2.34) and (3.42, 1.74) .. 
          (2.93, 1.69) .. controls (2.59, 1.66) and (2.44, 2.09) .. 
          (2.48, 2.52) .. controls (2.52, 2.93) and (2.57, 3.41) .. (2.24, 3.40);
    \draw (1.97, 3.40) .. controls (1.53, 3.40) and (1.08, 3.39) .. (0.64, 3.39);
    \draw (0.64, 3.39) .. controls (0.13, 3.39) and (0.22, 2.56) .. 
          (0.29, 1.86) .. controls (0.46, 0.34) and (2.89, 0.26) .. 
          (4.90, 0.19) .. controls (7.10, 0.12) and (9.35, 0.96) .. (9.38, 2.89);
    \draw (9.39, 3.27) .. controls (9.39, 3.75) and (9.40, 4.23) .. (9.41, 4.71);
    \draw (8.98, 4.69) .. controls (9.00, 5.35) and (8.76, 6.01) .. 
          (8.19, 6.00) .. controls (7.57, 6.00) and (7.34, 5.26) .. (7.32, 4.56);
    \draw (7.32, 4.56) .. controls (7.31, 4.10) and (7.29, 3.64) .. (7.28, 3.18);
    \draw (7.27, 2.80) .. controls (7.25, 2.25) and (7.56, 1.72) .. 
          (8.07, 1.72) .. controls (8.61, 1.72) and (8.91, 2.30) .. (8.93, 2.90);
    \draw (8.94, 3.27) .. controls (8.96, 3.75) and (8.97, 4.22) .. (8.98, 4.69);
    \draw (4.44, 5.91) .. controls (4.14, 5.83) and (4.09, 5.03) .. (4.05, 4.45);
    \draw (4.05, 4.45) .. controls (4.02, 4.04) and (3.99, 3.62) .. (3.96, 3.21);
    \draw (3.93, 2.83) .. controls (3.90, 2.33) and (3.85, 1.66) .. (4.10, 1.62);
    \draw (6.54, 5.99) .. controls (6.88, 5.87) and (6.86, 5.14) .. (6.85, 4.58);
    \draw (6.85, 4.58) .. controls (6.84, 4.12) and (6.83, 3.66) .. (6.82, 3.20);
    \draw (6.81, 2.82) .. controls (6.80, 2.49) and (6.79, 1.98) .. 
          (6.76, 1.85) .. controls (6.73, 1.73) and (6.60, 1.68) .. (6.48, 1.64);
 
 \fill [black] (5.3,4) circle (.085 cm);
\fill [black] (4.8,4) circle (.085 cm);

\fill [black] (5.8,4) circle (.085 cm);
\end{tikzpicture}
    (a) \quad \quad \quad \quad \quad \quad \quad \quad\quad \quad \quad \quad (b)
    \caption{(a) $FALP_n$  (b) $FALR_n$}
    \label{FALP}
\end{figure}

We denote the fully augmented link for the  $n$-pretzel link $FALP_n,$ see Figure \ref{FALP}(a). We explore the effects of a $\frac{\pi}{2}$ rotation on the left most crossing circle in a $FALP_n$ for $n\geq3$. Let $FALR_n$ denote the link we obtain from $FALP_n$ by rotating the left most crossing circle by $\pi/2,$ see Figure \ref{FALP}(b).

\subsubsection{$FALP_3$ and $FALR_{3}$}

\begin{figure}[h]
    \centering
    \begin{tikzpicture}[every node/.style={scale=0.8}, scale=.65][line width=3.8, line cap=round, line join=round]
    \draw (3.05, 4.64) .. controls (3.04, 5.03) and (2.55, 5.14) .. 
          (2.09, 5.11) .. controls (1.66, 5.09) and (1.15, 5.06) .. (1.15, 4.71);
    \draw (1.15, 4.45) .. controls (1.15, 4.28) and (1.15, 4.11) .. (1.15, 3.94);
    \draw (1.15, 3.67) .. controls (1.15, 3.28) and (1.67, 3.28) .. 
          (2.12, 3.28) .. controls (2.58, 3.28) and (3.09, 3.36) .. (3.07, 3.75);
    \draw (3.07, 3.75) .. controls (3.07, 4.05) and (3.06, 4.34) .. (3.05, 4.64);

    \draw (5.27, 5.21) .. controls (4.85, 5.22) and (4.71, 4.71) .. 
          (4.69, 4.22) .. controls (4.67, 3.72) and (4.89, 3.23) .. (5.33, 3.24);
    \draw (5.33, 3.24) .. controls (5.54, 3.25) and (5.76, 3.26) .. (5.97, 3.26);
    \draw (5.97, 3.26) .. controls (6.39, 3.27) and (6.57, 3.76) .. 
          (6.55, 4.24) .. controls (6.53, 4.68) and (6.50, 5.19) .. (6.14, 5.20);
    \draw (5.92, 5.20) .. controls (5.79, 5.20) and (5.66, 5.21) .. (5.54, 5.21);
    \draw (8.57, 3.06) .. controls (8.15, 3.06) and (7.91, 3.50) .. 
          (7.91, 3.96) .. controls (7.92, 4.34) and (7.93, 4.86) .. 
          (8.04, 5.06) .. controls (8.13, 5.22) and (8.34, 5.24) .. (8.54, 5.23);
    \draw (8.76, 5.23) .. controls (8.88, 5.22) and (9.01, 5.22) .. (9.13, 5.22);
    \draw (9.36, 5.21) .. controls (9.79, 5.19) and (9.78, 4.63) .. 
          (9.74, 4.13) .. controls (9.71, 3.64) and (9.68, 3.06) .. (9.28, 3.06);
    \draw (9.28, 3.06) .. controls (9.04, 3.06) and (8.81, 3.06) .. (8.57, 3.06);
    \draw (9.25, 5.21) .. controls (9.23, 7.25) and (7.01, 8.35) .. 
          (4.76, 8.36) .. controls (2.67, 8.38) and (0.30, 8.29) .. 
          (0.34, 6.50) .. controls (0.36, 5.63) and (0.39, 4.62) .. (1.15, 4.62);
    \draw (1.15, 4.62) .. controls (1.70, 4.63) and (2.25, 4.63) .. (2.80, 4.64);
    \draw (3.22, 4.64) .. controls (3.69, 4.64) and (3.94, 5.15) .. 
          (3.98, 5.67) .. controls (4.01, 6.20) and (4.27, 6.71) .. 
          (4.76, 6.74) .. controls (5.37, 6.77) and (5.43, 5.95) .. (5.41, 5.21);
    \draw (5.41, 5.21) .. controls (5.38, 4.64) and (5.36, 4.06) .. (5.34, 3.49);
    \draw (5.32, 3.00) .. controls (5.30, 2.43) and (5.19, 1.80) .. 
          (4.69, 1.78) .. controls (4.26, 1.76) and (4.08, 2.26) .. 
          (4.06, 2.75) .. controls (4.04, 3.26) and (3.74, 3.75) .. (3.26, 3.75);
    \draw (2.83, 3.75) .. controls (2.27, 3.76) and (1.71, 3.76) .. (1.15, 3.76);
    \draw (1.15, 3.76) .. controls (0.40, 3.77) and (0.20, 2.84) .. 
          (0.19, 1.97) .. controls (0.16, 0.16) and (2.61, 0.17) .. 
          (4.74, 0.18) .. controls (6.98, 0.18) and (9.31, 0.89) .. (9.28, 2.82);
    \draw (9.28, 3.31) .. controls (9.27, 3.94) and (9.26, 4.58) .. (9.25, 5.21);
    \draw (8.64, 5.23) .. controls (8.67, 6.07) and (8.17, 6.85) .. 
          (7.41, 6.85) .. controls (6.61, 6.84) and (6.09, 6.05) .. (6.05, 5.20);
    \draw (6.05, 5.20) .. controls (6.03, 4.64) and (6.00, 4.07) .. (5.98, 3.51);
    \draw (5.96, 3.02) .. controls (5.93, 2.31) and (6.50, 1.73) .. 
          (7.22, 1.74) .. controls (7.90, 1.74) and (8.54, 2.17) .. (8.56, 2.82);
    \draw (8.58, 3.31) .. controls (8.60, 3.95) and (8.62, 4.59) .. (8.64, 5.23);
  \draw[red, very thick] (.8,3.8) -- (.8,4.6); 
 \draw[red, very thick] (5.3,2.75) -- (6,2.75); 
   \draw[red, very thick] (8.55,2.75) -- (9.25,2.75); 
    
  \draw[red, very thick] (3.45,3.8) -- (3.45,4.7); 
  \draw[red, very thick] (5.4,5.6) -- (6.1,5.6);
  \draw[red, very thick] (8.6,5.6) -- (9.25,5.6);

  \node[thick, black] at (5,.2) {\large\textgreater};
  
  \node[thick, black] at (2,3.3) {\large\textless};
   \node[thick, black] at (2,5.15) {\large\textless};
   \node[thick, black] at (7.3,1.75) {\large\textgreater};
   
   
    \node[thick, black] at (4.7,4.2) {\large$\wedge$};
   
   \node[thick, black] at (6.6,4.2) {\large$\wedge$};
   \node[thick, black] at (7.9,4.2) {\large$\wedge$};
   \node[thick, black] at (9.78,4.2) {\large$\wedge$};

   \node[black] at (4.45,4.2) {$1$};
   \node[black] at (6.8,4.2) {$1$};
   \node[black] at (7.65,4.2) {$1$};
   \node[black] at (2.2,5.4) {$1$};
  
   \node[black] at (2.2,3) {$1$};
 
  \node at (8.8,6.2) {$-\frac{1}{4}$}; 
   \node at (8.8,2.3) {$-\frac{1}{4}$};  
     \node at (3.7,4.2) {$\frac{1}{4}$};
    
     \node at (5.7,2.3) {$-\frac{1}{4}$};
     \node at (5.7,6.1) {$-\frac{1}{4}$};

   \node[black] at (4.7,1.5) {$u_4$}; 
   \node[black] at (4.7,2.15) {$u_4$};
   
   \node[black] at (4.6,6.4) {$u_2$}; 
    \node[black] at (4.6,7) {$u_2$}; 
    \node[black] at (7.3,7.2) {$u_3$};  
    
    \node[black] at (7.3,6.5) {$u_3$};

   \node[black] at (5,8) {$u_1$};
  \node[black] at (5,7.5) {$\aleph$};
  
  \node[black] at (7.4,2.15) {$u_5$};
   \node[black] at (7.4,1.4) {$u_5$};
   \node[black] at (5,.5) {$u_6$};

 \node[black] at (5.5,1.2) {$\beth$};
   \node [black] at  (7.2,4.6) {$\daleth$};
    
  \node[black] at (4.2,4.6) {$\gimel$};

   \node[black] at (1,3.2) {$\omega_1$};
   \node[black] at (1,5.2) {$-\omega_1$};
   \node[black] at (3,3.2) {$\omega_1$};
   \node[black] at (3,5.2) {$-\omega_1$};
   \node[black] at (5,3) {$\omega_2$};
  \node[black] at (6.5,3) {$\omega_2$};
  \node[black] at (6.5,5.4) {$\omega_2$};
 \node[black] at (5,5.4) {$\omega_2$}; 
 \node[black] at (8.2,2.8) {$\omega_3$}; 
\node[black] at (9.7,2.8) {$\omega_3$};  
 
 \node[black] at (8.2,5.4) {$\omega_3$}; 
\node[black] at (9.7,5.4) {$\omega_3$};
 
\end{tikzpicture}
    \caption{$FALR_3$}
    \label{FALR_3}
\end{figure}

We have studied $FALP_3$ in detail in \S 4. We now compute the T-T equations for $FALR_3$. \\
Region $\aleph$:  We have shape parameters: 

$$\xi_1=\frac{-\omega_1}{u_{1}}, \quad \xi_2=\frac{-\frac{1}{4}}{u_{1}u_3},
\quad \xi_3=\frac{-\frac{1}{4}}{u_{3}u_2}, \quad \xi_4=\frac{-\omega_1}{u_{2}}$$ This is a four-sided region with equations:
$$\frac{-\omega_1}{u_{1}} - \frac{\frac{1}{4}}{u_{1}u_3} = 1 , \quad \frac{-\frac{1}{4}}{u_{1}u_3} - \frac{\frac{1}{4}}{u_{3}u_2} = 1,  \quad \frac{-\frac{1}{4}}{u_{3}u_2} - \frac{\omega_1}{u_{2}} = 1,  \quad \frac{-\omega_1}{u_{2}} - \frac{\omega_1}{u_{1}} = 1$$ solving gives us the relations
$$u_{2} = u_{1}, \quad -2\omega_1 = u_2, \quad \textrm{and} \quad u_{3} = -\frac{1}{2u_2}.$$
Region $\gimel$:
We have a three-sided region with shape parameters 
$$\xi_1=\frac{\omega_2}{u_2}=1, \quad \xi_2=\frac{\omega_2}{u_4}=1,
\quad \xi_3=\frac{-\frac{1}{4}}{u_2u_4}=1 \quad \implies
\quad u_2 = u_4 = \omega_2$$ \quad \quad \quad and $u_2^2=-\frac{1}{4} \quad  
\implies \quad 
u_2= \pm\frac{i}{2}.$\\
$FALR_3$ has invariant trace field $x^2+\frac{1}{4}.$ It can be checked using snap \cite{Snap} that both $FALP_3$ and $FALR_3$ are arithmetic. Since
they have the same invariant trace field, they are commensurable. Note that they have the same volume yet they are not isometric links as they don't have the same number of components.

\begin{figure}[h]
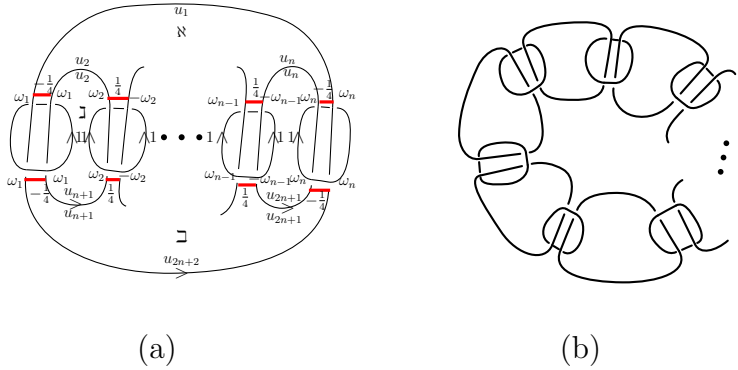

    \centering
   \include{nfal}
   (a) \quad \quad \quad \quad \quad \quad \quad \quad\quad \quad \quad \quad (b)
    \caption[$FALP_n$ with labels]{(a) $FALP_n$ with labels (b) Symmetric diagram of $FALP_n$}
    \label{FALPP_n}
\end{figure}

\subsubsection{T-T Polynomial for $FALP_n$ and $FALR_n$}
In this section we find a recurrence relation for the T-T polynomial for $FALP_n$ and $FALR_n$. 
\begin{theorem}
Let $\aleph$ be the region in $FALP_n$ denoted in Figure \ref{FALPP_n}(a). Let $C_n(x)$ be the (2,1) entry of the matrix equation in Proposition \ref{matrixprop}, where $x=u_2$ is the edge parameter
as shown in Figure 31(a). The T-T polynomial for $FALP_n$ is $C_n(x)$, which satisfies the recurrence relation 
 $$C_n(x) = \frac{C_{n-2}(x)}{4} + x C_{n-1}(x)$$ for $n \geq 5$ where $C_3(x) = x^2 +1/4, \quad C_4(x) = \frac{x(2x^2+1)}{2}$.
\end{theorem}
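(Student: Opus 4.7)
My plan is to apply Proposition \ref{matrixprop} to the outer region $\aleph$ of $FALP_n$, reduce the resulting matrix equation to a product of $n$ copies of a single $2\times 2$ matrix, and then invoke the Cayley--Hamilton theorem to extract the claimed recurrence.

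First, I resolve the interior T-T equations. Exactly as in the $FALP_3$ computation (regions $\beth$, $\gimel$, $\daleth$) in \S 4.2.2, and the analogous four-sided region equations in the $FAL_{4_1}$ example, the three-sided cap regions on each crossing circle together with the four-sided regions between adjacent crossing circles force every inner edge parameter to equal $x := u_2$ up to sign, force each $\omega_i$ to equal $\pm x/2$, and identify each intercusp label appearing along the projection plane inside the boundary of $\aleph$ with $\pm\tfrac14$ by Theorem \ref{tps}. In particular, every edge and crossing label appearing on the boundary of $\aleph$ is determined by $x$.

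Second, I write Proposition \ref{matrixprop} for the region $\aleph$, which is $n$-sided and contributes one crossing label and one edge label per crossing circle. After substituting the simplifications above, each of the $n$ factors
$$M_i \;:=\; \begin{bmatrix} 0 & \omega_{\aleph,i} \\ 1 & 0 \end{bmatrix} \begin{bmatrix} 1 & \epsilon_i u_{\aleph,i} \\ 0 & 1 \end{bmatrix}$$
reduces to the common matrix
$$A(x) \;=\; \begin{bmatrix} 0 & \tfrac14 \\ 1 & 0 \end{bmatrix} \begin{bmatrix} 1 & x \\ 0 & 1 \end{bmatrix} \;=\; \begin{bmatrix} 0 & \tfrac14 \\ 1 & x \end{bmatrix},$$
so the matrix equation for $\aleph$ reads $A(x)^n \sim I$. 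By construction, $C_n(x)$ is the $(2,1)$-entry of $A(x)^n$.

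Third, the recurrence is immediate from Cayley--Hamilton. Since $\operatorname{tr} A(x) = x$ and $\det A(x) = -\tfrac14$, we have $A(x)^2 - x\,A(x) - \tfrac14\,I = 0$. Multiplying by $A(x)^{n-2}$ gives
$$A(x)^n \;=\; x\,A(x)^{n-1} \,+\, \tfrac14\,A(x)^{n-2},$$
and reading off the $(2,1)$-entry of both sides produces the recurrence $C_n(x) = x\,C_{n-1}(x) + \tfrac{1}{4}C_{n-2}(x)$. The initial conditions are verified by direct calculation: $A(x)^3$ has $(2,1)$-entry $x^2 + \tfrac14$, agreeing with the 3-pretzel computation in \S 4.2.2, and $A(x)^4$ has $(2,1)$-entry $\tfrac{x(2x^2+1)}{2}$.

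The main obstacle is the bookkeeping in the first step: each factor $M_i$ depends on the sign $\epsilon_i = \pm 1$ (orientation of the traversal against the edge) and on whether the two strands threading the $i$-th crossing circle are parallel or antiparallel, which by Theorem \ref{tps} controls the sign of the $\pm \tfrac14$ intercusp label. The symmetry of the pretzel structure ensures these signs align consistently (or at worst introduce a global sign that is absorbed into the definition of $C_n(x)$), so that the matrix product genuinely collapses to $A(x)^n$ and the Cayley--Hamilton argument applies verbatim.
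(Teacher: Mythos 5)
Your core mechanism --- Cayley--Hamilton applied to a repeated $2\times 2$ factor --- is sound and is in fact equivalent to what the paper does (the paper left-multiplies at each stage by the fixed matrix $\left[\begin{smallmatrix}0&-1/4\\-1&x\end{smallmatrix}\right]$, which has trace $x$ and determinant $-\tfrac14$, and reads off the entries; that is the same three-term recurrence). The gap is in your second step, where you claim the matrix word for region $\aleph$ collapses to $A(x)^n$. It does not. In the oriented diagram of Figure \ref{FALPP_n}(a), the intercusp labels over the two \emph{end} crossing circles are $-\tfrac14$ while those over the $n-2$ middle circles are $+\tfrac14$ (Theorem \ref{tps} forces opposite signs for the parallel versus anti-parallel configurations, and the end circles are not of the same type as the middle ones). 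Moreover one of the $n$ edges of $\aleph$ is the outer edge $u_1=z$, a separate variable traversed with $\epsilon=-1$, not $x$. So the actual word is
$$
P_n \;=\; L\,A(x)^{\,n-2}\,R,\qquad
L=\begin{bmatrix}0&-\tfrac14\\[2pt]1&x\end{bmatrix},\quad
R=\begin{bmatrix}0&-\tfrac14\\[2pt]1&0\end{bmatrix}\begin{bmatrix}1&-z\\[2pt]0&1\end{bmatrix},
$$
with $L,R\neq A(x)$. Two interior sign flips and a distinct variable are not a ``global sign that is absorbed into $C_n$,'' so the sentence you flag as the main obstacle is exactly where the argument fails: you derive the recurrence for the $(2,1)$-entries of $A(x)^n$, but you never show the actual $C_n(x)$ (the $(2,1)$-entry of $P_n$) satisfies it, and your base-case checks are performed on $A(x)^n$, which only coincidentally agree.

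The repair is short and keeps your idea intact: since $L$ and $R$ are independent of $n$, apply Cayley--Hamilton to the \emph{middle} power, $A(x)^{n-2}=x\,A(x)^{n-3}+\tfrac14 A(x)^{n-4}$, and multiply by $L$ and $R$ on the outside to get $P_n=x\,P_{n-1}+\tfrac14 P_{n-2}$ entrywise, hence $C_n=xC_{n-1}+\tfrac14 C_{n-2}$; then compute $C_3$ and $C_4$ from the genuine six- and eight-matrix products as the paper does. (That the $(2,1)$-entry of $P_n$ happens to equal that of $A(x)^n$ is then a corollary --- both satisfy the same recurrence with the same seeds --- not something you may assume at the outset.)
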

\begin{proof}
From the symmetries in these links (see Figure \ref{FALPP_n}(b)) and the shape parameter equations for the four-sided regions we have \begin{enumerate}
    \item $-\omega_1 = \omega_2 = ... = \omega_{n}$
    \item $u_2 = u_3 = ... = u_{n}$
    \item and $-2\omega_i = u_j$ where $i,j \neq 1$ or $2{n}+2$.

\end{enumerate}
For simplicity let $u_2 = x$ and $u_1 = z$.

The smallest $FALP_n$ is when $n=3$. Let $n=3$ the matrix equation for Region $\aleph$ is:
$$\begin{bmatrix}
0&-\frac{1}{4}
\\1&0 
\end{bmatrix} \begin{bmatrix}
1&x
\\0&1 
\end{bmatrix} \begin{bmatrix}
0&\frac{1}{4}
\\1&0 
\end{bmatrix}  \begin{bmatrix}
1&x
\\0&1 
\end{bmatrix} \begin{bmatrix}
0&-\frac{1}{4}
\\1&0 
\end{bmatrix}
\begin{bmatrix}
1&-z
\\0&1 
\end{bmatrix} = \begin{bmatrix}
\frac{-x}{4}&\frac{1}{16}+\frac{xz}{4}
\\x^2+\frac{1}{4}&-x^2z-\frac{x}{4}-\frac{z}{4}
\end{bmatrix}$$ thus

$C_3(x) = x^2+\frac{1}{4}.$

For $n=4$ $FALP_4$
the matrix equation for Region $\aleph$ is:
$$\begin{bmatrix}
0&-\frac{1}{4}
\\1&0 
\end{bmatrix} \begin{bmatrix}
1&x
\\0&1 
\end{bmatrix}
\begin{bmatrix}
0&\frac{1}{4}
\\1&0 
\end{bmatrix} \begin{bmatrix}
1&x
\\0&1 
\end{bmatrix}
\begin{bmatrix}
0&\frac{1}{4}
\\1&0 
\end{bmatrix}  \begin{bmatrix}
1&x
\\0&1 
\end{bmatrix} \begin{bmatrix}
0&-\frac{1}{4}
\\1&0 
\end{bmatrix}
\begin{bmatrix}
1&-z
\\0&1 
\end{bmatrix} =$$ $$\begin{bmatrix}
-(\frac{x^2}{4}+\frac{1}{16})&\frac{x^2z}{4}+\frac{x}{16}+\frac{z}{16}
\\ \frac{x(2x^2+1)}{2}&-x^3z-\frac{x^2}{4}-\frac{xz}{2}-\frac{1}{16}
\end{bmatrix}$$ thus

$C_4(x) =\frac{x(2x^2+1)}{2}$ 

For $FALP_{n-2}$ Region $\aleph$ is a $(n-2)$-sided region with matrix equation 
$$\begin{bmatrix}
0&-\frac{1}{4}
\\1&0 
\end{bmatrix} \begin{bmatrix}
1&x
\\0&1 
\end{bmatrix}\Big(\begin{bmatrix}
0&\frac{1}{4}
\\1&0 
\end{bmatrix} \begin{bmatrix}
1&x
\\0&1 
\end{bmatrix}\Big)^{n-4} \begin{bmatrix}
0&-\frac{1}{4}
\\1&0 
\end{bmatrix}  \begin{bmatrix}
1&-z
\\0&1 
\end{bmatrix}$$ $$= \begin{bmatrix}
A_{n-2}&B_{n-2}
\\C_{n-2}&D_{n-2} 
\end{bmatrix} = \alpha\begin{bmatrix}
1&0
\\0&1 
\end{bmatrix}$$
Now for $FALP_{n-1}$ Region $\aleph$ is an $(n-1)$-sided region with matrix equation 
 $$\begin{bmatrix}
0&-\frac{1}{4}
\\1&0 
\end{bmatrix} \begin{bmatrix}
1&x
\\0&1 
\end{bmatrix}
\begin{bmatrix}
0&\frac{1}{4}
\\1&0 
\end{bmatrix} \begin{bmatrix}
0&1
\\-4&0 
\end{bmatrix}
\begin{bmatrix}
A_{n-2}&B_{n-2}
\\C_{n-2}&D_{n-2} 
\end{bmatrix} = 
\begin{bmatrix}
0&-\frac{1}{4}
\\-1&x 
\end{bmatrix}\begin{bmatrix}
A_{n-2}&B_{n-2}
\\C_{n-2}&D_{n-2} 
\end{bmatrix}$$
$$=\begin{bmatrix}
A_{n-1}&B_{n-1}
\\C_{n-1}&D_{n-1} 
\end{bmatrix} =
\begin{bmatrix}
-\frac{C_{n-2}}{4}&-\frac{D_{n-2}}{4}
\\-A_{n-2}+xC_{n-2}&-B_{n-2}+xD_{n-2} 
\end{bmatrix}$$
Now for $FALP_{n}$ Region $\aleph$ is an $n$-sided region with matrix equation 
$$
\begin{bmatrix}
0&-\frac{1}{4}
\\-1&x 
\end{bmatrix}\begin{bmatrix}
A_{n-1}&B_{n-1}
\\C_{n-1}&D_{n-1} 
\end{bmatrix}=\begin{bmatrix}
A_{n}&B_{n}
\\C_{n}&D_{n} 
\end{bmatrix} =$$

$$\begin{bmatrix}
-\frac{C_{n-1}}{4}&-\frac{D_{n-1}}{4}
\\-A_{n-1}+xC_{n-1}&-B_{n-1}+xD_{n-1} 
\end{bmatrix}$$

Where $A_{n-1} = -\frac{Cn-2}{4}$, thus $C_n(x) = \frac{C_{n-2}}{4} + xC_{n-1}.$
\end{proof}

In Table \ref{table-pretzel} below we compute $C_n(x)$ for some values
of $n$. We list the factors of $C_n(x)$. The factor in bold
corresponds to the invariant trace field, which is listed in the last
column. We checked using pari-gp that every root of this factor lies
in the invariant trace field.

\begin{prop} It follows from the recurrence that the
degree of $C_n(x) = n-1.$ 
\end{prop}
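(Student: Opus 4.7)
The plan is a straightforward induction on $n$ using the recurrence. First I would verify the two base cases $n=3$ and $n=4$ directly from the formulas given just above the proposition: $C_3(x) = x^2 + 1/4$ has degree $2 = 3-1$ and $C_4(x) = x(2x^2+1)/2 = x^3 + x/2$ has degree $3 = 4-1$. In both cases the leading coefficient is $1$.

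For the inductive step, I would assume $\deg C_{n-2}(x) = n-3$ and $\deg C_{n-1}(x) = n-2$ with leading coefficient $1$ in each. Applying the recurrence
\[
C_n(x) = \tfrac{1}{4} C_{n-2}(x) + x\, C_{n-1}(x),
\]
the second term $x\, C_{n-1}(x)$ has degree $(n-2)+1 = n-1$ with leading coefficient $1$, while the first term $\tfrac{1}{4} C_{n-2}(x)$ has degree $n-3$, which is strictly smaller. Since no cancellation of the leading term can occur, $\deg C_n(x) = n-1$ and its leading coefficient is again $1$, completing the induction.

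The argument has no obstacle; the only thing one must be careful about is tracking the leading coefficient alongside the degree, so that one can rule out cancellation in the recurrence and legitimately conclude that the degree bound from $x\, C_{n-1}(x)$ is achieved. Since that leading coefficient stays equal to $1$ at every step, the degree claim follows immediately.
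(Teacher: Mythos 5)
Your proof is correct and is essentially the argument the paper intends (the paper states the proposition without writing out the induction). One small observation: since $\deg\bigl(xC_{n-1}\bigr)=n-1$ and $\deg\bigl(\tfrac14 C_{n-2}\bigr)=n-3$ already differ, no cancellation of the top term is possible regardless of the leading coefficients, so the induction only needs the exact degrees of the two previous terms — though tracking the leading coefficient equal to $1$ as you do is harmless and makes the step transparent.
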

\begin{conjecture}\label{conj}

$C_n(x)$ satisfies the following conditions: 
\begin{enumerate}
    \item  If $n$ is prime then $C_n(x)$ is irreducible.
\item $C_m(x) | C_n(x)$ if and only if $m|n$.
\end{enumerate}
\end{conjecture}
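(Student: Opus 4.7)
The plan is to recognize $C_n(x)$ as a normalization of the classical Fibonacci polynomial $F_n$, after which both parts of the conjecture reduce to well-known irreducibility and divisibility theorems for $F_n$. The key observation is that the renormalization $P_n(x) := 2^{n-1} C_n(x)$ converts the recurrence $C_n = \tfrac{1}{4} C_{n-2} + x C_{n-1}$ into the much cleaner form $P_n = 2x\,P_{n-1} + P_{n-2}$, and the initial data $C_3 = x^2+\tfrac{1}{4}$, $C_4 = \tfrac{x(2x^2+1)}{2}$ become $P_3 = 4x^2+1$ and $P_4 = 8x^3+4x$.

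First, I would establish the identity $P_n(x) = F_n(2x)$, where $F_n$ is the Fibonacci polynomial defined by $F_0 = 0$, $F_1 = 1$, $F_n(y) = y\,F_{n-1}(y) + F_{n-2}(y)$. Extending the sequence backward via the recurrence yields $P_2 = 2x$, $P_1 = 1$, $P_0 = 0$, which matches $F_2(2x) = 2x$, $F_1(2x) = 1$, $F_0(2x) = 0$; together with the matching initial values at $n=3,4$, a straightforward induction on $n$ finishes this step. Consequently $C_n(x) = 2^{1-n}\,F_n(2x)$ in $\mathbb{Q}[x]$.

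Next, I would invoke two classical results on Fibonacci polynomials: (a) the Webb--Parberry theorem that $F_p(y)$ is irreducible over $\mathbb{Q}$ for every prime $p$; and (b) the Hoggatt--Long divisibility theorem, which asserts $F_m(y) \mid F_n(y)$ in $\mathbb{Q}[y]$ if and only if $m \mid n$. Since $C_n(x)$ differs from $F_n(2x)$ only by a nonzero rational scalar, and the substitution $y \mapsto 2x$ is a $\mathbb{Q}$-algebra isomorphism $\mathbb{Q}[y] \to \mathbb{Q}[x]$, both properties transfer verbatim to $C_n$, giving statements (1) and (2) simultaneously.

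The main obstacle, should one want a self-contained proof rather than citations, is establishing irreducibility in part (1). The cleanest route is through the finer factorization $F_n(y) = \prod_{d \mid n,\, d > 1} \Psi_d(y)$, where the irreducible factor $\Psi_d$ has roots $\{\,i(\zeta_{2d}^k + \zeta_{2d}^{-k}) : 1 \le k \le d-1,\ \gcd(k,d) = 1\,\}$. Proving each $\Psi_d$ is irreducible requires a careful Galois-theoretic computation in $\mathbb{Q}(\zeta_{2d})$, and one must split into cases depending on the $2$-adic valuation of $d$ because of the extra factor of $i = \zeta_4$ appearing in the roots; this is the delicate calculation underlying the whole plan. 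An added benefit of proving this finer factorization is that it yields part (2) in one stroke for \emph{all} $n$, since $C_m \mid C_n$ will then amount to the inclusion $\{d : d \mid m,\, d > 1\} \subseteq \{d : d \mid n,\, d > 1\}$, which is equivalent to $m \mid n$.
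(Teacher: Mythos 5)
The paper records this statement only as a conjecture and supplies no proof, so there is no argument of the author's to compare against; the relevant question is whether your reduction is sound, and it is. The rescaling $P_n=2^{n-1}C_n$ does convert $C_n=\tfrac14 C_{n-2}+xC_{n-1}$ into $P_n=2xP_{n-1}+P_{n-2}$, and the base cases check out: $P_3=4x^2+1=F_3(2x)$ and $P_4=8x^3+4x=F_4(2x)$, so induction gives $C_n(x)=2^{1-n}F_n(2x)$ for all $n\ge 3$. This identity is consistent with every entry of Table \ref{table-pretzel} (for instance $F_6(y)=y(y^2+1)(y^2+3)$ yields $C_6=x(4x^2+1)(4x^2+3)/16$), which is a good independent check. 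Since multiplication by a nonzero rational and the linear substitution $y\mapsto 2x$ preserve both irreducibility and divisibility in $\mathbb{Q}[x]$, parts (1) and (2) then follow from the classical results on Fibonacci polynomials: Webb and Parberry's 1969 paper already contains both the irreducibility criterion ($F_n$ irreducible over $\mathbb{Q}$ iff $n$ is prime) and the divisibility law ($F_m\mid F_n$ iff $m\mid n$), so attributing the latter to Hoggatt--Long is a minor misattribution rather than a gap (their paper treats the bivariate generalization). Your final paragraph on proving irreducibility of the factors $\Psi_d$ via Galois theory in $\mathbb{Q}(\zeta_{2d})$ is the only schematic part of the write-up, but it is not needed for the conjecture as stated; citing the classical theorems finishes the proof, and the finer factorization $F_n=\prod_{d\mid n,\,d>1}\Psi_d$ is a genuine bonus, since it identifies the boldfaced factors in Table \ref{table-pretzel} as $\Psi_n(2x)$ up to scalars and hence explains why those factors carry the invariant trace field.
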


\begin{table}
\include{table} 
\caption{T-T polynomial and invariant trace field for $FALP_n$ }
\label{table-pretzel}
\end{table}

\begin{figure}[h]
    \centering
   \begin{tikzpicture}[every node/.style={scale=0.5}, scale=.5][line width=3.8, line cap=round, line join=round]
    \draw (2.15, 4.01) .. controls (2.15, 4.33) and (1.78, 4.47) .. 
          (1.42, 4.49) .. controls (1.06, 4.51) and (0.69, 4.40) .. (0.67, 4.09);
    \draw (0.66, 3.86) .. controls (0.65, 3.75) and (0.65, 3.63) .. (0.64, 3.51);
    \draw (0.63, 3.28) .. controls (0.62, 2.97) and (1.01, 2.88) .. 
          (1.39, 2.89) .. controls (1.77, 2.91) and (2.16, 3.06) .. (2.16, 3.40);
    \draw (2.16, 3.40) .. controls (2.16, 3.61) and (2.15, 3.81) .. (2.15, 4.01);
    \draw (3.55, 3.06) .. controls (3.24, 3.09) and (3.11, 3.45) .. 
          (3.14, 3.80) .. controls (3.18, 4.12) and (3.26, 4.48) .. (3.55, 4.47);
    \draw (3.72, 4.46) .. controls (3.80, 4.46) and (3.88, 4.46) .. (3.96, 4.46);
    \draw (4.13, 4.45) .. controls (4.43, 4.44) and (4.47, 4.05) .. 
          (4.46, 3.70) .. controls (4.45, 3.34) and (4.26, 2.98) .. (3.94, 3.02);
    \draw (3.94, 3.02) .. controls (3.81, 3.03) and (3.68, 3.05) .. (3.55, 3.06);
    \draw (6.82, 3.01) .. controls (6.48, 3.02) and (6.36, 3.43) .. 
          (6.36, 3.82) .. controls (6.37, 4.19) and (6.44, 4.60) .. (6.75, 4.59);
    \draw (6.94, 4.58) .. controls (7.04, 4.58) and (7.13, 4.57) .. (7.23, 4.57);
    \draw (7.41, 4.56) .. controls (7.73, 4.54) and (7.77, 4.13) .. 
          (7.77, 3.75) .. controls (7.78, 3.36) and (7.61, 2.97) .. (7.27, 2.99);
    \draw (7.27, 2.99) .. controls (7.12, 2.99) and (6.97, 3.00) .. (6.82, 3.01);
    \draw (8.94, 3.08) .. controls (8.59, 3.09) and (8.44, 3.49) .. 
          (8.44, 3.88) .. controls (8.45, 4.26) and (8.54, 4.67) .. (8.88, 4.69);
    \draw (9.07, 4.70) .. controls (9.15, 4.70) and (9.24, 4.71) .. (9.32, 4.71);
    \draw (9.49, 4.72) .. controls (9.80, 4.73) and (9.81, 4.29) .. 
          (9.82, 3.90) .. controls (9.83, 3.50) and (9.73, 3.07) .. (9.38, 3.08);
    \draw (9.38, 3.08) .. controls (9.23, 3.08) and (9.09, 3.08) .. (8.94, 3.08);
    \draw (9.41, 4.71) .. controls (9.43, 6.49) and (7.29, 7.12) .. 
          (5.25, 7.16) .. controls (3.31, 7.21) and (1.05, 7.26) .. 
          (0.63, 5.62) .. controls (0.44, 4.86) and (0.21, 3.98) .. (0.67, 3.98);
    \draw (0.67, 3.98) .. controls (1.10, 3.99) and (1.53, 4.00) .. (1.96, 4.01);
    \draw (2.23, 4.01) .. controls (2.52, 4.02) and (2.52, 4.51) .. 
          (2.51, 4.92) .. controls (2.51, 5.38) and (2.71, 5.83) .. 
          (3.11, 5.84) .. controls (3.66, 5.85) and (3.68, 5.12) .. (3.64, 4.47);
    \draw (3.64, 4.47) .. controls (3.61, 4.06) and (3.59, 3.66) .. (3.56, 3.25);
    \draw (3.54, 2.87) .. controls (3.51, 2.34) and (3.42, 1.74) .. 
          (2.93, 1.69) .. controls (2.59, 1.66) and (2.44, 2.09) .. 
          (2.48, 2.52) .. controls (2.52, 2.93) and (2.57, 3.41) .. (2.24, 3.40);
    \draw (1.97, 3.40) .. controls (1.53, 3.40) and (1.08, 3.39) .. (0.64, 3.39);
    \draw (0.64, 3.39) .. controls (0.13, 3.39) and (0.22, 2.56) .. 
          (0.29, 1.86) .. controls (0.46, 0.34) and (2.89, 0.26) .. 
          (4.90, 0.19) .. controls (7.10, 0.12) and (9.35, 0.96) .. (9.38, 2.89);
    \draw (9.39, 3.27) .. controls (9.39, 3.75) and (9.40, 4.23) .. (9.41, 4.71);
    \draw (8.98, 4.69) .. controls (9.00, 5.35) and (8.76, 6.01) .. 
          (8.19, 6.00) .. controls (7.57, 6.00) and (7.34, 5.26) .. (7.32, 4.56);
    \draw (7.32, 4.56) .. controls (7.31, 4.10) and (7.29, 3.64) .. (7.28, 3.18);
    \draw (7.27, 2.80) .. controls (7.25, 2.25) and (7.56, 1.72) .. 
          (8.07, 1.72) .. controls (8.61, 1.72) and (8.91, 2.30) .. (8.93, 2.90);
    \draw (8.94, 3.27) .. controls (8.96, 3.75) and (8.97, 4.22) .. (8.98, 4.69);
    \draw (4.44, 5.91) .. controls (4.14, 5.83) and (4.09, 5.03) .. (4.05, 4.45);
    \draw (4.05, 4.45) .. controls (4.02, 4.04) and (3.99, 3.62) .. (3.96, 3.21);
    \draw (3.93, 2.83) .. controls (3.90, 2.33) and (3.85, 1.66) .. (4.10, 1.62);
    \draw (6.54, 5.99) .. controls (6.88, 5.87) and (6.86, 5.14) .. (6.85, 4.58);
    \draw (6.85, 4.58) .. controls (6.84, 4.12) and (6.83, 3.66) .. (6.82, 3.20);
    \draw (6.81, 2.82) .. controls (6.80, 2.49) and (6.79, 1.98) .. 
          (6.76, 1.85) .. controls (6.73, 1.73) and (6.60, 1.68) .. (6.48, 1.64);
 
 \fill [black] (5.3,4) circle (.085 cm);
\fill [black] (4.8,4) circle (.085 cm);

\fill [black] (5.8,4) circle (.085 cm);

\draw[red, very thick] (2.35,3.4) -- (2.35,4); 
\draw[red, very thick] (3.5,2.8) -- (3.95,2.8);

 \draw[red, very thick] (3.65,4.8) -- (4.05,4.8);  
 
 \draw[red, very thick] (6.8,2.65) -- (7.25,2.65); 
 
 \draw[red, very thick] (6.9,4.8) -- (7.3,4.8);
  \draw[red, very thick] (8.89,2.7) -- (9.35,2.7); 
  \draw[red, very thick] (9,4.85) -- (9.4,4.85);

  \node[black] at (3,4.95) {$\gimel$};
  
  \node[thick, black] at (5,.21) {\large\textgreater};
  
   \node[thick, black] at (8,1.75) {\large\textgreater};

   
    \node[thick, black] at (6.35,4) {\large$\wedge$};
   
   \node[thick, black] at (4.45,4) {\large$\wedge$};
   
      \node[thick, black] at (3.15,4) {\large$\wedge$};
   
   \node[thick, black] at (1.4,4.5) {\large\textless};
   \node[thick, black] at (1.4,2.85) {\large\textless};

   \node[thick, black] at (7.8,4) {\large$\wedge$};
    \node[thick, black] at (8.45,4) {\large$\wedge$};

   
     \node at (2.5,3.75) {$\frac{1}{4}$};
     \node at (3.82,5.2) {$-\frac{1}{4}$};
     \node at (3.6,2.5) {$-\frac{1}{4}$};

      \node at (7.1,2.3) {$\frac{1}{4}$};
      \node at (7.1,5.1) {$\frac{1}{4}$};
      \node at (9,2.3) {$-\frac{1}{4}$};
      \node at (9.1,5.1) {$-\frac{1}{4}$};

   \node[black] at (3,6) {$u_2$}; 
    \node[black] at (3,5.6) {$u_2$}; 
    \node[black] at (2.8,1.5) {$u_{n+1}$}; 
     \node[black] at (2.9,2) {$u_{n+1}$};
    \node[black] at (8,6.2) {$u_n$};  
    
    \node[black] at (8.1,5.8) {$u_n$};  
        \node[black] at (8,1.5) {$u_{2n+1}$}; 
     \node[black] at (8,2.1) {$u_{2n+1}$};

   \node[black] at (5,6.9) {$u_1$};
  \node[black] at (5,6.4) {$\aleph$};

   \node[black] at (5,.5) {$u_{2n+2}$};
    \node[black] at (8.3,4) {$1$};
   \node[black] at (8,4) {$1$};
   \node[black] at (6.12,4) {$1$};
  
   \node[black] at (3,4) {$1$};
    \node[black] at (4.6,4) {$1$};
   \node[black] at (1.45,4.8) {$1$};
  \node[black] at (1.45,2.5) {$1$};

   \node[black] at (5,1.2) {$\beth$};
 
   \node[black] at (.5,2.9) {$\omega_1$};
   \node[black] at (.65,4.5) {$-\omega_1$};
   \node[black] at (2.1,2.9) {$\omega_1$};
   \node[black] at (2.1,4.5) {$-\omega_1$};
   \node[black] at (3.2,2.9) {$\omega_2$};
  \node[black] at (4.4,2.9) {$\omega_2$};
  \node[black] at (4.4,4.7) {$\omega_2$};
 \node[black] at (3.3,4.7) {$\omega_2$}; 
 \node[black] at (6.3,2.9) {$\omega_{n-1}$}; 
 \node[black] at (6.4,4.75) {$\omega_{n-1}$}; 
 \node[black] at (7.7,2.7) {$-\omega_{n-1}$}; 
 \node[black] at (7.85,4.75) {$-\omega_{n-1}$}; 
 \node[black] at (8.7,2.8) {$\omega_{n}$}; 
 \node[black] at (8.6,4.75) {$\omega_{n}$}; 
\node[black] at (9.7,2.8) {$\omega_{n}$}; 
\node[black] at (9.7,4.9) {$\omega_{n}$};

\end{tikzpicture}
    \caption{$FALR_n$ with labels}
    \label{FALR_n}
\end{figure}
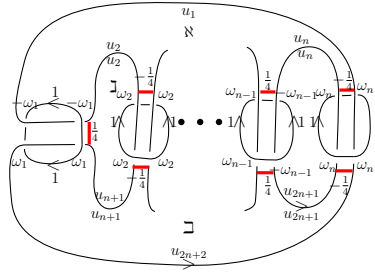

Computations for $FALR_n$: 

For Region $\gimel$ we have a 3-sided region
$$\xi_1=\frac{\omega_2}{u_2}=1, \quad \xi_2=\frac{\omega_2}{u_{n+1}}=1,
\quad \xi_3=\frac{-\frac{1}{4}}{u_2u_{n+1}}=1,$$ $\Longrightarrow$
$$u_2 = u_{n+1} = \omega_2 = \pm\frac{i}{2}.$$
 From the similarities in the 4-sided regions we get the following equations
$$u_3 = . . . = u_n, \quad \omega_2 = . . . = \omega_n, \quad u_3 = 2\omega_2.$$
Without loss of generality let $$\omega_2 = \frac{i}{2}, \quad  \omega_1 = x, \quad u_1 = z.$$
Then Region $\aleph$ is a $(n+1)$-sided region with matrices equation
$$
\begin{bmatrix}
0&-\frac{1}{4}
\\1&0 
\end{bmatrix} \begin{bmatrix}
1&i
\\0&1 
\end{bmatrix}
\Big(\begin{bmatrix}
0&\frac{1}{4}
\\1&0 
\end{bmatrix} \begin{bmatrix}
1&i
\\0&1 
\end{bmatrix}\Big)^{n-3} \begin{bmatrix}
0&-\frac{1}{4}
\\1&0 
\end{bmatrix}  \begin{bmatrix}
1&-\frac{i}{2}
\\0&1 
\end{bmatrix}\begin{bmatrix}
0&-x
\\1&0 
\end{bmatrix} \begin{bmatrix}
1&1
\\0&1 
\end{bmatrix}\begin{bmatrix}
0&-x
\\1&0 
\end{bmatrix}$$  $$\begin{bmatrix}
1&-z
\\0&1 
\end{bmatrix} = \alpha\begin{bmatrix}
1&0
\\0&1 
\end{bmatrix}.$$

The (2,1)-entry will give you a solution in $\mathbb{Q}(i)$ for all $n.$ 
All the cusps other than the crossing circle that is rotated have equal cusp shapes of $2i.$ To find the cusp shape for the cusp of the rotated crossing circle, solve the above equation and multiply the solution by $4.$ 
\begin{theorem}\label{com1}
Let $m ,n$ $\geq 3$, then $FALR_m$ and $FALR_n$ are commensurable. \end{theorem}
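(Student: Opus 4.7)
My plan is to show that $k(S^3\setminus FALR_n)=\mathbb{Q}(i)$ for every $n\geq 3$, and then to invoke Reid's arithmeticity criterion together with the Borel--Margulis commensurability classification for arithmetic Kleinian groups. Reid's theorem asserts that a non-compact finite-volume hyperbolic 3-manifold is arithmetic iff its invariant trace field is an imaginary quadratic number field; and for cusped arithmetic hyperbolic 3-manifolds, the invariant quaternion algebra is automatically $M_2(k\Gamma)$, so two such manifolds are commensurable iff they have the same invariant trace field. Once $kM_n=\mathbb{Q}(i)$ for all $n$, the $FALR_n$ will all lie in the commensurability class of the Bianchi orbifold $\mathbb{H}^3/\mathrm{PSL}_2(\mathbb{Z}[i])$, and the theorem will follow.

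To compute the invariant trace field, I would use the data already collected immediately before the theorem statement. The shape-parameter equations for the three-sided region $\gimel$ force $u_2=u_{n+1}=\omega_2=\pm i/2$; the symmetries of the four-sided regions further force $u_3=\cdots=u_n$ with $u_k=2\omega_k$ and $\omega_2=\cdots=\omega_n$; and the matrix equation for the large region $\aleph$ admits a geometric root $\omega_1\in\mathbb{Q}(i)$. Applying Theorem \ref{mainthm}, every ordinary crossing circle has cusp shape $\pm 2i$ and the rotated crossing circle has cusp shape $4\omega_1\in\mathbb{Q}(i)$; applying Theorem \ref{mainthm2}, each projection-plane component has cusp shape equal to a sum of edge labels, again in $\mathbb{Q}(i)$. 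Hence $cM_n\subseteq\mathbb{Q}(i)$, with equality since $2i$ actually occurs as a cusp shape. Theorem \ref{cm} then promotes this to $kM_n=\mathbb{Q}(i)$.

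The main obstacle I expect is verifying, uniformly in $n$, that a root of the region $\aleph$ matrix equation really lies in $\mathbb{Q}(i)$ rather than in a proper algebraic extension. After substituting $\omega_j=i/2$ and $u_k=i/2$ for $k\neq 1,2n+2$, every matrix in the product has entries in $\mathbb{Q}(i)$, so the $(2,1)$-entry is a polynomial over $\mathbb{Q}(i)$ in the two remaining variables $\omega_1$ and $z=u_1$. I would set up a two-term recurrence in $n$ for this polynomial, analogous to the recurrence $C_n=\tfrac{1}{4}C_{n-2}+xC_{n-1}$ of the preceding subsection but adjusted for the two extra factors contributed by the rotated crossing circle and with $x=i/2$ substituted; solving the recurrence (or merely tracking a persistent $\mathbb{Q}(i)$-rational root) is the essential computation. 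Once that uniform rationality is in place, the applications of Theorem \ref{cm}, Reid's theorem, and the arithmetic commensurability criterion assemble immediately into a proof.
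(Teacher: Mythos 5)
Your proposed route runs in the opposite direction from the paper's and, as written, has a genuine gap at its key step. The arithmeticity criterion you invoke is misstated: for a non-cocompact finite-volume Kleinian group $\Gamma$, having $k\Gamma$ an imaginary quadratic field is \emph{necessary} but not \emph{sufficient} for arithmeticity. The actual criterion (Maclachlan--Reid, Theorem 8.3.2) also requires that $\mathrm{tr}(\gamma)$ be an algebraic integer for every $\gamma\in\Gamma$, and there exist cusped hyperbolic $3$-manifolds with imaginary quadratic invariant trace field and non-integral traces, hence non-arithmetic. So even after you establish $kM_n=\mathbb{Q}(i)$ for all $n$, you cannot conclude that the $FALR_n$ lie in the Bianchi commensurability class of $\mathrm{PSL}_2(\mathbb{Z}[i])$ without separately verifying integrality of traces, which your proposal never addresses. (Once arithmeticity \emph{is} known, your final step is fine: for non-cocompact arithmetic groups the invariant quaternion algebra is $M_2(k\Gamma)$, so the invariant trace field does determine the commensurability class.) A second, acknowledged but unresolved, gap is the uniform-in-$n$ computation that the region-$\aleph$ matrix equation has its geometric root in $\mathbb{Q}(i)$; you flag this as ``the essential computation'' but do not carry it out, and without it even the trace-field input to your argument is missing.

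The paper avoids all of this with a one-line topological argument: $FALR_n$ is exhibited as an $(n-1)$-sheeted cover of the Borromean rings complement (Figure \ref{fbr1'}), so every $FALR_n$ is commensurable with $FALR_3$, and transitivity of commensurability gives the theorem. Note that the logical order is then reversed relative to your plan: commensurability with the (arithmetic) Borromean rings \emph{yields} arithmeticity of $FALR_n$ and $kM_n=\mathbb{Q}(i)$ as corollaries, rather than requiring them as inputs. If you want to salvage your approach, you would need to (i) actually solve the recurrence to show the geometric solution lies in $\mathbb{Q}(i)$ for all $n$, and (ii) prove integrality of traces --- at which point the covering-space argument is both shorter and strictly stronger.
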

\begin{proof}
From Figure \ref{fbr1'} we can see that $FALR_n$ is a $(n-1)$-sheeted cover of the Borromean rings. Hence $FALR_n$ is commensurable with the Borromean rings $FALR_3$. Since commensurability is an equivalence relation, $FALR_m$, which is a $(m-1)$-sheeted cover of the Borromean rings is commensurable with $FALR_n$. 
\end{proof}
\begin{figure}[h]
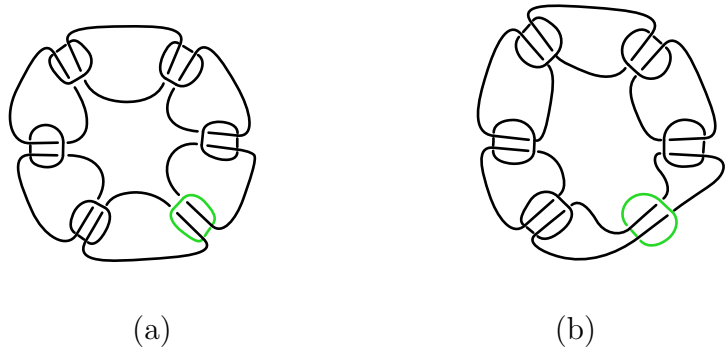

    \centering
    \include{fal6}
      (a) \quad \quad \quad \quad \quad \quad \quad \quad\quad \quad \quad \quad (b)
    \caption[$FALR_6$]{(a) Symmetric diagram of $FALP_6$ (b) $FALR_6$ with rotated crossing circle green}
    \label{fal6}
\end{figure}
\begin{figure}[h]
    \centering
    \include{br1'}
    \caption[BR covers]{$FALR_6$ where green dot is vertical axis viewed from $\infty$, Borromean Ring with green crossing circle viewed from $\infty.$}
    \label{fbr1'}
\end{figure}
\begin{corollary}

For $n \geq 3$, the invariant trace field for $FALR_n$ is $\mathbb{Q}(i)$. 
\end{corollary}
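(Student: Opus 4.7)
The plan is to reduce to the single case $n=3$ using commensurability, and then identify the invariant trace field of $FALR_3$ explicitly from the T-T polynomial computed just above.

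First I would invoke Theorem \ref{com1}, which states that $FALR_m$ and $FALR_n$ are commensurable for all $m,n\geq 3$. Since the invariant trace field $kM$ is a commensurability invariant (Theorem 5.3 of Neumann--Reid, as cited in the paper), it follows that $k(S^3\setminus FALR_n)$ is independent of $n\geq 3$. Hence it suffices to identify $k(S^3\setminus FALR_3)$.

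Next I would use the computation carried out in the $FALR_3$ example above. There the shape parameter equations for the triangular region $\gimel$ force $u_2=u_{n+1}=\omega_2=\pm i/2$, and in general the T-T polynomial for region $\aleph$ of $FALR_3$ is $x^2+\tfrac14$, whose roots are $\pm i/2$. So the crossing and edge parameters of $FALR_3$ all lie in $\mathbb{Q}(i)$. Applying Theorem \ref{nt} of Neumann--Tsvietkova, together with the verification in the proof of Theorem \ref{cm} that the intercusp arcs and cusp arcs for a FAL give a subcomplex $X$ with $\pi_1(X)\twoheadrightarrow\pi_1(M)$ surjective, we conclude that $k(S^3\setminus FALR_3)\subseteq \mathbb{Q}(i)$.

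For the reverse inclusion I would appeal to Theorem \ref{cm}, which gives $cM = kM$ for FAL complements. Since the cusp shape at the rotated crossing circle of $FALR_3$ is $4\omega_1=\pm 2i$ by Theorem \ref{mainthm}(1), the cusp field already contains $i$, so $\mathbb{Q}(i)\subseteq cM = kM$. Combining both inclusions gives $k(S^3\setminus FALR_3)=\mathbb{Q}(i)$, and the commensurability step then yields $k(S^3\setminus FALR_n)=\mathbb{Q}(i)$ for all $n\geq 3$.

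The only subtle point is checking that the ``solution in $\mathbb{Q}(i)$ for all $n$'' asserted after the matrix product for $FALR_n$ is genuinely forced by the system (not merely one root among several lying outside the field); but once we accept the commensurability argument via Theorem \ref{com1}, this direct verification is not even needed, and the proof reduces to the two short containments above.
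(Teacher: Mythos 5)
Your proposal is correct and follows essentially the same route as the paper: reduce to $FALR_3$ via the commensurability statement of Theorem \ref{com1} together with the commensurability invariance of $kM$, and then read off $k(FALR_3)=\mathbb{Q}(i)$ from the T-T parameters (all lying in $\mathbb{Q}(i)$) combined with Theorems \ref{nt}, \ref{cm}, and \ref{mainthm}. The only quibble is numerical: for $FALR_3$ the rotated crossing circle has $\omega_1=\mp i/4$, so its cusp shape is $4\omega_1=\mp i$ rather than $\pm 2i$, but this does not affect the containment $\mathbb{Q}(i)\subseteq cM$.
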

\begin{conjecture}\label{com2}

\begin{enumerate}
    \item $FALP_n$ and $FALP_m$ are incommensurable for $n \neq m.$
     \item 
   For $n\geq 4,$ $FALP_n$ and $FALR_n$ are incommensurable for all $n.$\end{enumerate} \end{conjecture}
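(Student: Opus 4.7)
The plan is to identify the invariant trace field of $FALP_n$ explicitly as a classical number field depending on $n$, and distinguish these fields using cyclotomic number theory. First I would show that the T-T polynomial $C_n$ is a disguised Fibonacci polynomial. Scaling by $\tilde{C}_n(x) := 2^{n-1} C_n(x)$, the recurrence $C_n = \frac{1}{4}C_{n-2} + x C_{n-1}$ becomes $\tilde{C}_n = 2x\tilde{C}_{n-1} + \tilde{C}_{n-2}$; matching initial conditions $\tilde{C}_3 = 4x^2+1$ and $\tilde{C}_4 = 8x^3+4x$ gives $\tilde{C}_n(x) = F_n(2x)$, where $F_n$ is the $n$th Fibonacci polynomial ($F_1 = 1$, $F_2 = y$, $F_n = y F_{n-1} + F_{n-2}$). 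This identification immediately yields Conjecture \ref{conj}: the classical factorization $F_n(y) = \prod_{d \mid n,\, d > 1} f_d(y)$, where $f_d$ is irreducible over $\mathbb{Q}$ of degree $\phi(d)$ for $d \geq 3$ (and $f_2 = y$), shows that $C_n$ is irreducible when $n$ is prime and that $C_m \mid C_n$ iff $m \mid n$.

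By Theorem \ref{nt} of Neumann--Tsvietkova, the invariant trace field $kM(FALP_n)$ is generated by the intercusp and translation parameters, all of which lie in $\mathbb{Q}(\omega)$ for $\omega$ the geometric intercusp parameter. Combined with Theorem \ref{geometric}, $\omega$ is a root of the \emph{primitive} factor $f_n(2x)$ of $C_n$, so $kM(FALP_n) = L_n := \mathbb{Q}[y]/(f_n(y))$. Since the roots of $f_n$ are $\{2i\cos(k\pi/n) : \gcd(k,n)=1,\ 1\leq k< n\}$, we get $L_n = \mathbb{Q}(i \cdot 2\cos(\pi/n))$, a degree-$2$ CM extension of the real cyclotomic subfield $L_n^+ := \mathbb{Q}(\cos(2\pi/n))$, with generator $\alpha_n$ satisfying $\alpha_n^2 = -2 - 2\cos(2\pi/n)$.

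For part (1), since the invariant trace field is a commensurability invariant, it suffices to show $L_m \neq L_n$ for $m \neq n$ with $m,n \geq 3$. The real cyclotomic fields $L_n^+$ separate most pairs, and when $L_m^+ = L_n^+$ (a finite list of coincidences governed by $\mathbb{Q}(\zeta_k) = \mathbb{Q}(\zeta_{2k})$ for odd $k$), the quadratic extensions $L_n/L_n^+$ are still inequivalent because the defining values $-2 - 2\cos(2\pi/n)$ differ by a non-square in $L_n^+$. For part (2), the proof of Theorem \ref{com1} gives $kM(FALR_n) = \mathbb{Q}(i) = L_3$ for all $n \geq 3$, so one need only observe that $L_n \neq \mathbb{Q}(i)$ for $n \geq 4$; this is immediate from the distinctness in (1), since $L_3$ is the unique field of this form equal to $\mathbb{Q}(i)$.

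The main obstacle is confirming that the geometric solution of the T-T equations always selects the primitive factor $f_n$ of $F_n$, rather than a smaller $f_d$ with $d \mid n$, $d < n$. This requires a careful analysis beyond Theorem \ref{geometric} to rule out the smaller branches---perhaps by showing that a solution from $f_d$ would compute cusp shapes matching $FALP_d$ rather than $FALP_n$ (via Theorem \ref{mainthm}), contradicting the fundamental-domain structure of the $n$-pretzel link. A secondary difficulty is the case-by-case verification that $L_m \neq L_n$ when $L_m^+ = L_n^+$; this reduces to a finite check modulo the cyclotomic framework but needs Pell-type non-square arguments in small real quadratic fields. Once both points are established, both parts of the conjecture follow from routine cyclotomic number theory.
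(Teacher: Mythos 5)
First, be aware that the paper does not prove this statement: it appears as Conjecture \ref{com2}, and the remark following it only derives it \emph{conditionally} from Conjecture \ref{conj}, which is itself left open. Your proposal therefore attempts strictly more than the paper does, and its opening move is genuinely valuable: setting $\tilde{C}_n(x)=2^{n-1}C_n(x)$ does convert the recurrence into $\tilde{C}_n=2x\tilde{C}_{n-1}+\tilde{C}_{n-2}$ with $\tilde{C}_3=4x^2+1$ and $\tilde{C}_4=8x^3+4x$, so $\tilde{C}_n(x)=F_n(2x)$ (this checks against Table \ref{table-pretzel}, e.g. $F_6(2x)=2x(4x^2+1)(4x^2+3)$). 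The classical factorization $F_n=\prod_{d\mid n,\ d>1}f_d$ into irreducible factors of degree $\phi(d)$ (Webb--Parberry, Hoggatt--Long, Levy --- cite this, it is not folklore-free) would then prove Conjecture \ref{conj} outright, and your field identification $L_n=\mathbb{Q}(2i\cos(\pi/n))$ is correct. The fields $L_n$, $n\ge 3$, are indeed pairwise distinct, and more cleanly than your ``non-square/Pell'' sketch suggests: for odd $k$ one has $\cos(\pi/k)\in\mathbb{Q}(\zeta_k)^+$, so $L_k=\mathbb{Q}(\zeta_k)^+(i)$, while $L_{2k}=\mathbb{Q}(\zeta_k)^+\bigl(2i\cos(\pi/2k)\bigr)$ and $\cos(\pi/2k)$ generates a field of degree $\phi(k)>\phi(k)/2$, so $L_k\ne L_{2k}$; together with the triple $\{3,4,6\}$ (giving $\mathbb{Q}(i)$, $\mathbb{Q}(\sqrt{-2})$, $\mathbb{Q}(\sqrt{-3})$) these are the only coincidences of the maximal real subfields, which are uniquely determined by a CM field.

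The genuine gap is the one you yourself flag, and it is fatal as the argument stands: nothing in the proposal (or in the paper) shows that the geometric solution of the T-T equations for $FALP_n$ is a root of the \emph{primitive} factor $f_n(2x)$ rather than of $f_d(2x)$ for a proper divisor $d\mid n$. This is not a technicality. For example $C_6$ and $C_9$ both contain the factor $4x^2+1$, whose roots generate $\mathbb{Q}(i)=L_3$; if the geometric root lay there, one would get $kM(FALP_6)=kM(FALP_3)$ and $kM(FALP_9)=kM(FALR_9)=\mathbb{Q}(i)$, so \emph{both} parts of the conjecture would fail for those indices. Your proposed repair --- that a root of $f_d$ would ``compute cusp shapes matching $FALP_d$'' --- produces no contradiction, since non-isometric manifolds may share cusp shapes, and Theorem \ref{geometric} is of no help here because it identifies the geometric root only by comparison with a cusp shape computed numerically by SnapPy (the bold factors in Table \ref{table-pretzel} are computational observations, not theorems). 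What is needed is an independent, diagram-level identification of the geometric solution, e.g. showing from Purcell's explicit circle packing for the $n$-pretzel polyhedron that the crossing-circle cusp shape is $4\omega$ with $\omega=\tfrac{i}{2}\cos(\pi/n)$, equivalently that $u_2=i\cos(\pi/n)$ --- a root of $f_n(2x)$ since $\gcd(1,n)=1$. A secondary point to make explicit: you need $kM(FALP_n)=\mathbb{Q}(x_{\mathrm{geo}})$ exactly, not merely containment; this does follow from Theorems \ref{cm}, \ref{mainthm} and \ref{mainthm2} together with the relations $u_j=-2\omega_i$, but it should be said. Until the primitive-factor identification is supplied, the proposal only yields $kM(FALP_n)=L_d$ for some unidentified $d\mid n$, and neither part of the conjecture follows.
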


\begin{remark}
For $n \neq m$, $FALP_n$ has different invariant trace field than $FALP_m$ by Conjecture \ref{conj}.  For the second part, by Conjecture \ref{conj} $FALP_n$ for $n > 3$ have invariant trace fields $\neq$ $\mathbb{Q}(i)$, while the invariant trace field for all $FALR_n$ is $\mathbb{Q}(i)$, thus they are incommensurable.
\end{remark}
\subsection{Geometric Solutions to T-T Equations}
The T-T method gives us a way to construct algebraic equations in variables, which then gives us a representation $\rho_i$ in $PSL(2,\mathbb{C})$ with matrix entries in terms of root $x_i$ of the T-T polynomial.
Moreover, there exists a root $x_0$ of the T-T polynomial such that $\rho_{x_0}$ is discrete and faithful, this solution will be the geometric solution.

For FALs all the non-real complex solutions lie in $k\Gamma$ thus to find the geometric solution we use Theorem \ref{mainthm}, which states that $4\omega$ will give us the cusp shape.  We can therefore work backwards, use SnapPy to compute the cusp shape and see which solutions give us the cusp shape, thereby giving us the geometric solution. 

\begin{theorem}
\label{geometric}

Let $L$ be a FAL, then
the solution of
 the T-T polynomials which corresponds
to the
cusp shape is the geometric solution.

\end{theorem}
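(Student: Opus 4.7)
The plan is to use Theorem \ref{mainthm} as a bridge that attaches a concrete numerical invariant to each algebraic solution of the T-T polynomial, and to recognize the geometric solution as the unique one for which that invariant matches the actual hyperbolic cusp shape of $S^3\setminus L$. Recall from \cite{tt} that the T-T equations have discrete solution set and that at least one root $x_0$ of the T-T polynomial gives rise to the discrete faithful representation $\rho_{x_0}\colon\pi_1(S^3\setminus L)\to PSL(2,\mathbb{C})$; by Mostow rigidity this $x_0$ is essentially unique, and the whole task is to pick it out.

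First I would fix a crossing circle $C$ of $L$ and let $\omega(x)$ denote the intercusp parameter at $C$ expressed as an algebraic function of the polynomial root $x$. For $x=x_0$, Theorem \ref{mainthm} gives that the genuine cusp shape of $C$ equals
$$\tau_C=\phi_C(\omega_0),\qquad \phi_C(\omega)=4\omega,\quad \frac{4\omega}{1+2\omega},\quad\text{or}\quad \frac{4\omega}{1-2\omega},$$
according to whether $C$ carries no half-twist, a right-hand half-twist, or a left-hand half-twist. The complex number $\tau_C$ is a bona fide geometric invariant of $S^3\setminus L$ and can be computed independently from the diagram, either by Purcell's circle-packing construction \cite{purcell-fal} or numerically via SnapPy. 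Thus along the geometric solution the algebraic formula $\phi_C\circ\omega$ recovers the correct hyperbolic cusp shape, which is the content of Theorem \ref{mainthm}.

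Next I would show that for any non-geometric root $x_i\neq x_0$ of the T-T polynomial the value $\phi_C(\omega(x_i))$ fails to equal $\tau_C$. By the Neumann--Tsvietkova result (Theorem \ref{nt}) and Theorem \ref{cm}, the intercusp parameters $\omega(x_i)$ generate $k\Gamma$, and $\tau_C\in cM=kM=k\Gamma$; the different roots $x_i$ are related by nontrivial Galois automorphisms of the splitting field of the T-T polynomial over $\mathbb{Q}$, and these automorphisms send the geometric $\omega_0$ to the other $\omega(x_i)$. Since $\phi_C$ is a M\"obius transformation with rational coefficients, it is Galois-equivariant and injective, so $\phi_C(\omega(x_i))$ is a nontrivial Galois conjugate of $\tau_C$ and in particular distinct from $\tau_C$. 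The remaining complex-conjugation ambiguity is broken by the convention, recorded after Theorem \ref{mainthm}, that cusp shapes are normalized to have positive imaginary part.

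The main obstacle is the last step: ruling out the possibility that two algebraically distinct roots of the T-T polynomial produce the same cusp shape via $\phi_C$. The injectivity of the M\"obius $\phi_C$ reduces the issue to showing that the $\omega$-values attached to different roots are themselves distinct, which in turn follows from the fact that the $\omega$'s generate $k\Gamma$ and hence distinguish Galois conjugates. If the T-T polynomial happens to have a repeated root, or if the cusp shape $\tau_C$ lies in a proper subfield of $k\Gamma$ that is fixed by the Galois action permuting the other roots, a more delicate argument would be required; in such a case I would pass to a different crossing circle and apply the same criterion there, or combine the cusp-shape data from several cusps, since by Theorem \ref{cm} the collection of cusp shapes generates $cM=kM$ and must therefore separate the geometric root from the rest.
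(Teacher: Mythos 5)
Your proposal follows essentially the same route as the paper: invoke Mostow--Prasad rigidity for uniqueness of the cusp shape, use Theorem \ref{mainthm} (and \ref{mainthm2}) to read a cusp shape off each root of the T-T polynomial, and select the root whose value matches the actual geometric cusp shape of $S^3\setminus L$. The only substantive difference is that you attempt to justify, via Galois equivariance of the M\"obius map $\phi_C$, the injectivity step that the paper simply asserts (``different solutions to the T-T polynomial would imply different choices of cusp shapes''), and you honestly flag the residual gap when a conjugate of $\tau_C$ could coincide with $\tau_C$ --- a gap that is equally present, and unaddressed, in the paper's own proof.
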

\begin{proof}
The T-T polynomial for FALs can be written in a variable which is an edge parameter for a crossing circle, which is related to the cusp shape
of that crossing circle. Due to Mostow-Prasad Rigidity the geometric structure of the manifold is unique.  For fully augmented links, Theorems \ref{mainthm} and \ref{mainthm2} show how the T-T polynomial gives us the cusp shape, which is a geometric invariant.  Different solutions to the T-T polynomial would imply different choices of cusp shapes, which would contradict the uniqueness.  Thus the solution to the discrete faithful representation must be the one that results in the correct cusp shape. 
\end{proof}

\bibliographystyle{plain}
\bibliography{references}

\
\end{document}